\newtheorem{theorem}{Theorem}[section]
\newtheorem{proposition}[theorem]{Proposition}
\newtheorem{lemma}[theorem]{Lemma}
\newtheorem{corollary}[theorem]{Corollary}
\theoremstyle{definition}
\newtheorem{definition}[theorem]{Definition}
\newtheorem{example}[theorem]{Example}
\newcommand{\bdd}{\mbox{$\partial$}}
\theoremstyle{remark}
\newtheorem{remark}[theorem]{Remark}
\theoremstyle{problem}
\newtheorem{problem}[theorem]{Problem}
\numberwithin{equation}{section}
\begin{document}

\title[Hyperbolic volume and twisted Alexander invariants of knots and links]
{Hyperbolic volume and twisted Alexander invariants of knots and links} 


\subjclass[2000]{Primary 57M27, Secondary 57M25}

\author{Hiroshi Goda}
\address{Department of Mathematics,
Tokyo University of Agriculture and Technology,
2-24-16 Naka-cho, Koganei,
Tokyo 184-8588, Japan}
\email{goda@cc.tuat.ac.jp}

\begin{abstract}
Let $\Delta_{L,\rho_n}(t)$ be the twisted Alexander polynomial with respect to 
the representation given by the composition of the lift of the holonomy 
representation of a certain hyperbolic link $L$ and the $n$-dimensional 
irreducible complex representation of $\text{SL}(2,\mathbb C)$. 
We consider a sequence of $\Delta_{L,\rho_n}(t)$ and extract 
the volume of the complement of $L$ from the asymptotic behaviour 
of the sequence obtained by evaluating $t=1$ or $t=-1$. 
\end{abstract}

\thanks{
This work was supported by JSPS KAKENHI 
Grant Number JP15K04868.
}

\keywords{twisted Alexander polynomial, hyperbolic link, volume}

\maketitle

\section{Introduction}
One of the known classical knot and link invariants 
is the Alexander polynomial \cite{alexander}. 
This polynomial is derived from the fundamental group 
of the complement of a knot or link and is determined 
by its maximal metabelian quotient. 
This computable invariant plays an important role in the knot theory. 
In 1990, Lin \cite{lin} introduced the twisted Alexander polynomial 
associated with a knot $K$ and a representation of 
the knot group $\pi_1(S^3\setminus K)\to \text{GL}(n,\mathbb F)$, 
where $\mathbb F$ is a field.
Wada \cite{wada} generalized this work and showed how to define 
a twisted polynomial given only a presentation of a group and representations.

Kirk and Livingston \cite{kirk-livingston} gave a construction 
of the twisted Alexander invariants, 
which they interpreted in the general framework of Reidemeister torsion, 
and showed how the work of Lin and Wada could also be interpreted in that framework. 
We note that these invariants can be calculated using the classical Fox calculus. 
Since then, various applications of the twisted Alexander invariants 
have been given.
For example, see the survey papers of \cite{friedl-vidussi, morifuji}.

A 3-manifold $M$ is called {\it hyperbolic} if the interior admits a complete 
metric of constant curvature $-1$ of finite volume. 
If $M$ is hyperbolic, it follows from Mostow-Prasad rigidity 
that the hyperbolic metric is unique up to isometry. 
Furthermore, its fundamental group $\pi_1(M)$ dominates all geodesic 
information of $M$. 
Therefore, the volume of $M$ with respect to the hyperbolic metric 
is an invariant of $M$ and may be computed theoretically 
from a presentation of $\pi_1(M)$.

We say that a knot $K$ is {\it hyperbolic} if $S^3\setminus K$ 
is hyperbolic. In that case, we will denote the volume by
$\text{Vol}(K)$. 
Thurston's geometrization theorem states that any knot that 
is neither a torus knot nor a satellite knot is a hyperbolic knot. 
The link case is the same.

In this paper, we give a volume formula of hyperbolic knots or links 
using twisted Alexander invariants. 
The volume formula in the knot case has already been proven 
\cite{goda}.

M\"{u}ller \cite{muller2} 
established a volume formula for a closed hyperbolic 
3-manifold using Ray-Singer analytic torsion.  
The Selberg trace formula was applied to the upper half 3-space, 
so that he obtained an expression of Ray-Singer analytic torsion 
in terms of the twisted Ruelle zeta function. 
The volume formula is based on the expression and functional equations 
for the twisted Selberg and Ruelle zeta functions associated with 
the $n$-th symmetric power of the standard representation of SL$(2,\mathbb C)$, 
which include the volume information of a closed hyperbolic 3-manifold. 
By \cite{muller1} of M\"{u}ller on the equivalence 
between Ray-Singer torsion and  Reidemeister torsion 
for unimodular representations, 
we know that the hyperbolic volume of a closed 3-manifold can be 
expressed using Reidemeister torsion. 

Applying Thurston's hyperbolic Dehn surgery theorem, 
Menal-Ferrer and Porti \cite{menal-porti2} obtained 
a formula for the volume of a cusped hyperbolic 3-manifold $M$ 
using the so-called higher-dimensional Reidemeister torsion invariants, 
which are associated with representations 
$\rho_n:\pi_1(M)\to\text{SL}(n,\mathbb C)$
corresponding to the holonomy representation 
$\text{Hol}_M:\pi_1(M)\to \text{PSL}(2,\mathbb C)$
(see Section \ref{sec:holonomy} for details).
In the present paper, we consider the relationship
between the higher-dimensional Reidemeister torsion invariants 
and the twisted Alexander invariants, 
thereby obtaining a formula for the volume for a hyperbolic link
using twisted Alexander invariants.

Let $L=K_1\cup \cdots \cup K_b$ be a $b$-component link in $S^3$. 
We call $L$ an \textit{algebraically split link} if 
the linking numbers $\ell k(K_i,K_j)$ are zero for all $i,\,j$ 
(Definition \ref{def:link}).
Any knot may be regarded as an algebraically split link. 
Let $\Delta_{L,\rho_{n}}(t)$ be the twisted Alexander invariant 
of Wada's notation \cite{wada}.
For an integer $k (>1)$, set
$\displaystyle{
\mathcal A_{L,2k}(t)
=
\frac{\Delta_{L,\rho_{2k}}(t)}{\Delta_{L,\rho_2}(t)}}
$
and 
$\displaystyle{
\mathcal A_{L,2k+1}(t)
=
\frac{\Delta_{L,\rho_{2k+1}}(t)}{\Delta_{L,\rho_3}(t)}}.
$
A result of this paper is the following asymptotic formula.

\begin{theorem}\label{thm:link}
Let $L$ be an algebraically split hyperbolic link in the 3-sphere. 
Then, 
$$
\lim_{k\to\infty}\frac{\log|\mathcal A_{L,2k+1}(1)|}{(2k+1)^2}
=
\lim_{k\to\infty}\frac{\log|\mathcal A_{L,2k}(1)|}{(2k)^2}
=
\frac{{\rm Vol}(L)}{4\pi}.
$$
\end{theorem}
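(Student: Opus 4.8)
\medskip
\noindent\emph{Strategy of the proof.} The plan is to translate the statement into one about higher-dimensional Reidemeister torsion and then apply the volume formula of Menal-Ferrer and Porti \cite{menal-porti2}. First, by the interpretation of Wada's invariant as a Reidemeister torsion (Kirk--Livingston \cite{kirk-livingston}; see also \cite{friedl-vidussi}), if $E_L=S^3\setminus N(L)$ denotes the link exterior and $\alpha\colon\pi_1(E_L)\to\langle t\rangle$ is the total linking number homomorphism, which sends every meridian to $t$, then
\[
\Delta_{L,\rho_n}(t)\ \doteq\ \tau\bigl(E_L;\mathbb C(t)^n_{\rho_n\otimes\alpha}\bigr)\in\mathbb C(t)^{\times},
\]
where $\doteq$ denotes equality up to a unit $\pm t^{j}$; in particular the coefficient module $\mathbb C(t)^n_{\rho_n\otimes\alpha}$ is acyclic. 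I would then specialise $t\to1$ and identify the outcome with the higher-dimensional torsion invariant $\mathbb T_n(E_L)$ of \cite{menal-porti2}, associated with $\rho_n$ and (for odd $n$) the geometric homology bases coming from the cusps.

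The behaviour at $t=1$ is controlled by the untwisted cohomology $H^*(E_L;\mathbb C^n_{\rho_n})$. Here the hypothesis that $L$ is algebraically split is used: as all linking numbers vanish, $\alpha$ kills every peripheral longitude, so on each boundary torus the $\mathbb C[t^{\pm1}]$-coefficient system $\mathbb C[t^{\pm1}]^n_{\rho_n\otimes\alpha}$ is, away from the meridian direction, untwisted; this is what makes the cusps decouple near $t=1$ and matches the set-up of \cite{menal-porti2}. By the twisted cohomology computations there, $H^*(E_L;\mathbb C^n_{\rho_n})=0$ when $n$ is even, whereas for $n$ odd one has $H^0=H^3=0$ and $\dim_{\mathbb C}H^1=\dim_{\mathbb C}H^2=b$, one dimension per cusp. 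Feeding this into the long exact sequence obtained from $0\to\mathbb C[t^{\pm1}]\xrightarrow{\,t-1\,}\mathbb C[t^{\pm1}]\to\mathbb C\to0$, together with the standard relation between the order of a torsion at $t=1$ and these homology dimensions, shows that $\Delta_{L,\rho_{2k}}(t)$ is regular and nonzero at $t=1$, while $\Delta_{L,\rho_{2k+1}}(t)$ has a zero at $t=1$ of order exactly $b$. The last point rests on a non-degeneracy property of the relevant cusp torsions---for $n=3$ this is the non-vanishing of the adjoint Reidemeister torsion of a hyperbolic link, with the analogous statement for higher odd $n$---so that each cusp contributes precisely one factor $(t-1)$. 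Since $\Delta_{L,\rho_3}(t)$ also vanishes to order $b$ at $t=1$, while $\Delta_{L,\rho_{2k+1}}(1)=\Delta_{L,\rho_3}(1)=0$ individually, it is exactly the quotient $\mathcal A_{L,2k+1}(t)$ that is regular and nonzero there; this explains why the ratios $\mathcal A_{L,2k},\mathcal A_{L,2k+1}$ are the right quantities to evaluate (in the even case the ratio merely serves to match the odd case and to cancel the unit ambiguity).

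Next I would identify the numbers produced. For even $n$ one gets directly $\Delta_{L,\rho_{2k}}(1)\doteq\mathbb T_{2k}(E_L)$, no homology basis being needed by acyclicity. For odd $n$, the leading coefficient $\lim_{t\to1}(t-1)^{-b}\Delta_{L,\rho_{2k+1}}(t)$ equals, by the comparison between the torsion of a $\mathbb C[t^{\pm1}]$-complex and that of its $t=1$ specialisation equipped with the homology bases produced by the above exact sequence, the torsion $\tau(E_L;\mathbb C^n_{\rho_n})$ computed with those bases. The crux is that these bases agree, cusp by cusp, with the geometric bases used in \cite{menal-porti2} to define $\mathbb T_{2k+1}(E_L)$---exactly as in the knot case \cite{goda}, where $b=1$. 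Consequently $\mathcal A_{L,2k}(1)\doteq\mathbb T_{2k}(E_L)/\mathbb T_2(E_L)$ and $\mathcal A_{L,2k+1}(1)\doteq\mathbb T_{2k+1}(E_L)/\mathbb T_3(E_L)$, the unit $\pm t^j$ now contributing only a factor $\pm1$, harmless for $|\cdot|$.

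Finally, $\mathbb T_2(E_L)$ and $\mathbb T_3(E_L)$ are fixed nonzero constants, so dividing by $(2k)^2$, respectively $(2k+1)^2$, and letting $k\to\infty$, the asymptotic formula of Menal-Ferrer and Porti \cite{menal-porti2} (the cusped analogue of M\"uller's formula \cite{muller2}), namely $\tfrac1{n^2}\log|\mathbb T_n(E_L)|\to\mathrm{Vol}(L)/4\pi$ in the normalisation fixed as in \cite{goda}, gives both limits. The main obstacle is the identification in the previous paragraph: proving that the homology bases implicit in the leading term at $t=1$ of the twisted Alexander polynomial coincide, cusp by cusp, with the geometrically normalised bases of \cite{menal-porti2}, equivalently that the zero of $\Delta_{L,\rho_{2k+1}}(t)$ at $t=1$ has order exactly $b$ with no hidden degeneracy; carrying the sign and unit normalisations coherently through all the identifications is the remaining bookkeeping.
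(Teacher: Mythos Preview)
Your strategy is essentially the same as the paper's: interpret $\Delta_{L,\rho_n}(t)$ as a Reidemeister torsion, analyse the specialisation $t\to1$ via the Milnor/Wang exact sequence to identify the result with the higher-dimensional Reidemeister torsions $\mathcal T_n$ of Menal-Ferrer and Porti (this is the paper's Theorem~\ref{thm:limit}), and then apply their asymptotic volume formula (Theorem~\ref{thm:menal-porti}). The identification of homology bases that you flag as ``the main obstacle'' is exactly what Section~\ref{sec:yamaguchi} is devoted to, via Lemmas~\ref{lem:basechange3}--\ref{lem:basechange6}.

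However, you misidentify where the \emph{algebraically split} hypothesis enters, and this creates a genuine gap in the even case. You invoke it to ensure that the total linking number homomorphism $\alpha$ kills each peripheral longitude. But in the paper's framework the ``longitude'' $\lambda_\ell$ is \emph{defined} as a generator of $\ker(\alpha_h\circ(i|_{T_\ell})_*)$, and since each meridian maps to $t$ this kernel is automatically infinite cyclic for \emph{any} link; no hypothesis is needed there. Indeed, the odd-dimensional limit holds for an arbitrary cusped hyperbolic $M$ (Theorem~\ref{thm:main}, first equation). The actual role of ``algebraically split'' is confined to the \emph{even} case: it guarantees condition~(\ref{eq:menal-porti}), namely that each inclusion-induced map $H_1(T_\ell;\mathbb Z/2\mathbb Z)\to H_1(M;\mathbb Z/2\mathbb Z)$ has nontrivial kernel, which by Corollary~3.4 of \cite{menal-porti2} ensures the existence of an \emph{acyclic spin structure} $\eta$. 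Without an acyclic spin structure, neither the vanishing $H^*(E_L;V_{2k})=0$ (Proposition~\ref{prop:dim} has this as a standing hypothesis) nor the even-dimensional half of Theorem~\ref{thm:menal-porti} is available. Your proposal never mentions spin structures, so the statements you quote from \cite{menal-porti2} in the even case are not yet justified; that is the missing step.
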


It might be interesting to compare the volume conjecture 
on the colored Jones polynomial for knots in \cite{kashaev, murakami}
(see Problem \ref{pro:Jones}).
The main theorem (Theorem \ref{thm:main}) provides 
a generalization of this formula to a complete, oriented, 
hyperbolic 3-manifold whose boundary consists of 
torus cusps. 
The equation obtained by evaluating $-1$ instead of $1$ 
also holds (Corollary \ref{cor:-1}).

This paper is organized as follows. 
In Section \ref{sec:pre}, we recall the definitions of Reidemeister 
torsions and twisted Alexander invariants and then fix the notation.
Section \ref{sec:holonomy} is devoted to explaining how to 
obtain the $n$-dimensional complex irreducible representation 
$\pi_1(M)\to\text{SL}(n,\mathbb C)$ 
via the holonomy representation of $\pi_1(M)$.
The important results used to prove the main theorem are those of 
Menal-Ferrer and Porti \cite{menal-porti1,menal-porti2} 
and a slight generalization of a result of Dubois and Yamaguchi 
\cite{dubois-yamaguchi1}.
We review the results of Menal-Ferrer and Porti in Section \ref{sec:menal-porti}
and discuss the generalization of the results of Dubois and Yamaguchi 
in Sections \ref{sec:dubois} and \ref{sec:yamaguchi}.
Section \ref{sec:results} is devoted to the proof of the main results. 
Theorem \ref{thm:link} means that we are able to obtain an approximate value of 
the volume of an algebraically split link via Fox calculus. 
We give some sample calculations in Section \ref{sec:calculation}.

The author would like to thank Professor Yoshikazu Yamaguchi 
for many helpful conversations
and Professor Joan Porti for his advice.

\section{Preliminaries}\label{sec:pre}
In this section, we review the basic notion and results regarding Reidemeister torsion 
\cite{dubois-yamaguchi1,porti2,turaev2}.

Let $C_*=(0\to C_n\xrightarrow{d_n}C_{n-1}\xrightarrow{d_{n-1}}\cdots
\xrightarrow{d_1}C_0\to 0)$ be a chain complex of finite-dimensional vector spaces 
over a field $\mathbb F$.
Choose a basis $\bold c^{i}$ of $C_i$ and a basis $\bold h^i$ of the $i$th homology 
group $H_i(C_*)$ if nonzero.
We denote by $\bold c$ ($\bold h$, resp.) the collection $\{\bold c^i\}$ 
($\{\bold h^i\}$, resp.). 
The Reidemeister torsion of $C_*$ with this choice of bases is defined as follows. 

For each $i$, let $\bold b^i$ be a set of vectors in $C_i$ 
such that $d_i(\bold b^i)$ is a basis of $B_{i-1}={\rm im}(d_i:C_i\to C_{i-1})$, 
and $\bold{\tilde{h}}^i$ a lift of $\bold h^i$ in $Z_i=\ker(d_i:C_i\to C_{i-1})$.
The set of vectors $d_{i+1}(\bold b^{i+1})\bold{\tilde{h}}^i\bold b^i$ 
becomes a basis of $C_i$. 
Let $[d_{i+1}(\bold b^{i+1})\bold{\tilde{h}}^i\bold b^i/\bold c^i]\,(\in\mathbb F^*)$ 
be the determinant of the transformation matrix between those bases. 
\begin{definition}\label{def:torsion}
The {\it Reidemeister torsion} of the chain complex $C_*$ 
with bases $\bold c$ and $\bold h$ is
$$
\text{Tor}(C_*,\bold c,\bold h)
=
\prod_{i=0}^n[d_{i+1}(\bold b^{i+1})\bold{\tilde{h}}^i\bold b^i/\bold c^i]^{(-1)^{i+1}}
\hspace{0.3cm}
\in\mathbb F^*/\{\pm 1\}.$$
\end{definition}
It is known that $\text{Tor}(C_*,\bold c,\bold h)$ is independent of the choice of 
$\bold b^{i}$ and the lifts $\bold{\tilde{h}}^i$.
We are interested in the volume, namely, 
the absolute values of the Reidemeister torsion, 
so we do not consider its sign in this paper.   
Thus, the torsion lies in $\mathbb F^*/\{\pm 1\}$, 
but there are ways to avoid the sign indeterminacy.
\begin{remark}\label{rmk:sign}
In \cite{menal-porti2}, the authors use the power $(-1)^i$ instead of $(-1)^{i+1}$ 
in Definition \ref{def:torsion}.
In that case, the sign of the right-hand side of the equation in Theorem 7.1 in \cite{menal-porti2}
is opposite. See Remark 2.2 and Theorem 4.5 in \cite{porti2}.
\end{remark}

Let $W$ be a finite CW-complex and 
$(V,\rho)$ be a pair of a vector space over $\mathbb F$ 
and a homomorphism of $\pi_1(W)$ into $\text{Aut}(V)$.
The vector space $V$ turns into a right $\mathbb Z[\pi_1(W)]$-module 
by using the right action of $\pi_1(W)$ on $V$ given by 
\begin{equation}\label{eq:module}
v\cdot \gamma=\rho^{-1}(\gamma)(v)
\end{equation}
for $v\in V$ and $\gamma\in\pi_1(W)$.
We denote this module by $V_{\rho}$. 
The complex of the universal cover of $W$ with integer coefficients 
$C_*(\widetilde{W};\mathbb Z)$ becomes a left $\mathbb Z[\pi_1(W)]$-module 
by the action of $\pi_1(W)$ on $\widetilde{W}$ as the covering transformation group.
We define the $V_{\rho}$-twisted chain complex of $W$ as follows: 
\begin{equation}\label{eq:chain1}
C_*(W;V_{\rho})=V_{\rho}\otimes_{\mathbb Z[\pi_1(W)]}C_*(\widetilde{W};\mathbb Z).
\end{equation}
The boundary operator of the chain complex is defined by linearly 
and
\begin{equation}\label{eq:bdd1}
d_i(v\otimes c_i)
=
(\text{Id}\otimes d_i)(v\otimes c_i)=v\otimes d_i(c_i)
\end{equation}
for $c_i\in C_i(\widetilde{W};\mathbb Z)$.
Then, we can define the {\it $V_{\rho}$-twisted homology of $W$},  
which we denote by $H_*(W;V_{\rho})$.
 
Let $\{v_1,\ldots , v_n\}$ be a basis of $V$ 
and 
let $c_1^i,\ldots , c^i_{k_i}$ denote the set of $i$-dimensional cells of $W$. 
We take a lift $\tilde{c}^i_{j}$ of the cell $c^i_j$ in $\widetilde{W}$.
Then, for each $i$, 
$\bold{\tilde{c}}^i$ is a basis of the $\mathbb Z[\pi_1(W)]$-module $C_i(\widetilde{W};\mathbb Z)$. 
Thus, we have the following basis of $C_i(W;V_{\rho}):$
$$
\bold{\tilde{c}}^i
=
\{v_1\otimes \tilde{c}_1^i,\ldots,v_n\otimes\tilde{c}_1^i,\ldots ,
v_1\otimes\tilde{c}_{k_i}^i,\ldots,v_n\otimes\tilde{c}_{k_i}^i\}.
$$
If $H_i(W;V_{\rho})\neq 0$, choosing for each $i$ a basis $\bold h^i$ 
of $H_i(W;V_{\rho})$
and set $\bold h=\{\bold h^i\}$, 
we may define the Reidemeister torsion as 
$$
\text{Tor}(C_*(W;V_{\rho}),\bold c,\bold h)
\hspace{0.3cm}
\in\mathbb F^*/\{\pm 1\}.$$

\begin{remark}
The torsion does not depend on the lifts of the cells $\tilde{c}^i_j$ 
if $\det\rho=1$. Furthermore, if the Euler characteristic of $W$, say $\chi(W)$, 
is equal to $0$, we can use any basis of $V$ since the torsion is multiplied 
by the determinant of the base change matrix to the power $\chi(W)$.
\end{remark}

Here, we introduce a twisted chain complex with one variable.
This will be done using a $\mathbb Z[\pi_1(W)]$-module with the variable.
We regard $\mathbb Z$ as the multiplicative group generated by $t$, namely, 
$\mathbb Z=\langle t\rangle$
and consider a surjective homomorphism $\alpha: \pi_1(W)\to \mathbb Z$.
Every homomorphism from $\pi_1(W)$ to an Abelian group 
factors through the abelianization $H_1(W;\mathbb Z)$ of $\pi_1(W)$; 
that is, we have the following commutative diagram: 
$$\begin{picture}(200,70)
\linethickness{0.5pt}
\put(20,60){$\pi_1(W)$}
\put(55,63){\vector(1,0){70}}
\put(130,60){$H_1(W;\mathbb Z)$}
\put(150,55){\vector(0,-1){35}}
\put(145,8){$\mathbb Z$}
\put(55,55){\vector(2,-1){80}}
\put(85,27){$\alpha$}
\put(155,37){$\alpha_h$}
\end{picture}$$
Since we suppose $\alpha$ is a surjective, 
the induced homomorphism $\alpha_{h}$ is also surjective. 
In this paper, we consider one variable
rational functions $\mathbb F(t)$ 
associated with $\alpha_h$.
We write $V(t)$ 
for $\mathbb F(t)\otimes_{\mathbb F}V$ for brevity.
The fundamental group $\pi_1(W)$ acts on $V(t)$:
\begin{equation}\label{eq:rep}
\alpha\otimes\rho:
\pi_1(W)\to \text{Aut}(V(t)).
\end{equation}
Thus, $V(t)$ inherits the structure of a right $\mathbb Z[\pi_1(W)]$-module
as (\ref{eq:module}), 
so we denote it by $V_{\rho}(t)$
and consider the associated twisted chain complex 
$C_*(W;V_{\rho}(t))$ given by
\begin{equation}\label{eq:chain2}
C_*(W;V_{\rho}(t))=V_{\rho}(t)\otimes_{\mathbb Z[\pi_1(W)]}C_*(\widetilde{W};\mathbb Z)
\end{equation}
where
\begin{equation}\label{eq:module2}
f\otimes v\otimes \gamma \cdot c_*
=
f\alpha(\gamma)\otimes\rho^{-1}(\gamma)(v)\otimes c_*
\end{equation}
for any $f\in\mathbb F(t), v\in V, \gamma\in\pi_1(W)$ and 
$c_* \in C_*(\widetilde{W};\mathbb Z)$. 
We call this chain complex the {\it $V_{\rho}(t)$-twisted chain complex}
and its homology is denoted by $H_*(W;V_{\rho}(t))$.
Note that the boundary operator is defined by linearity 
and 
\begin{equation}\label{eq:bdd2}
d_i(f\otimes v\otimes c_i)
=
f\otimes v \otimes d_i(c_i)
\end{equation}
as in (\ref{eq:bdd1}).
If the twisted homology 
$H_*(W;V_{\rho}(t))=0$, 
then $C_*(W;V_{\rho}(t))$ is said to be {\it acyclic.} 

\begin{definition}\label{def:TAI}
If $C_*(W;V_{\rho}(t))$ is acyclic, then 
the {\it twisted Alexander invariant} of $W$ is
$$
\Delta_{W,\rho}(t)
=
\text{Tor}(C_*(W;V_{\rho}(t)), 1\otimes\bold c,\emptyset)
\hspace{0.3cm}
\in\mathbb F^*(t)/\{\pm 1\}.$$
\end{definition}

Proposition \ref{prop:homology1} asserts that 
the assumption holds in our setting.
Note that the twisted Alexander invariant is determined up to 
a factor $\pm t^{p}\,\,(p\in\mathbb Z)$ 
like the classical Alexander polynomial
(cf. Theorem 2 in \cite{wada}).

\begin{example}
Let $K$ be a knot in the 3-sphere $S^3$ 
and $W_K$ be a CW-complex corresponding to 
$S^3-\text{Int}N(K)$. 
If the representation $\rho \in \text{Hom}(\pi_1(W_{K});\mathbb C)$ 
is the trivial homomorphism and $\alpha$ is the abelianization of $\pi_1(W_K)$, 
that is, 
$\alpha:\pi_1(W_K)\to H_1(W_K;\mathbb Z)\cong \langle t\rangle$, 
then 
the twisted chain complex $C_*(W_K;V_{\rho}(t))$ is acyclic 
and the twisted Alexander invariant 
$\Delta_{W_K,\rho}(t)$ is the classical Alexander invariant 
divided by $(t-1)$.
See Theorem 4 in \cite{milnor1} or Capter II in \cite{turaev2}, for example.
\end{example}

In the present paper, we study a compact 3-manifold $M$ whose boundary 
consists of tori. 
In particular, we discuss a link complement in the 3-sphere. 
We always suppose a CW-structure $W$ of $M$ 
and use the notation 
$C_*(M;V_{\rho})=C_*(W;V_{\rho})$.
Furthermore, 
we omit the collection $\bold c$ of the basis and 
denote by $\text{Tor}(C_*(M;V_{\rho}),\bold h)$
under the simple homotopy invariance 
(see \cite{chapman} or Theorem 11.32 in \cite{davis-kirk}).
Following these, 
we denote by $\Delta_{M,\rho}(t)$ 
the twisted Alexander invariant for $M$ 
and 
by $\Delta_{L,\rho}(t)$ for a link $L$ 
in $S^3$.

\section{Holonomy representations of the fundamental groups of hyperbolic 
3-manifolds and their lifts}\label{sec:holonomy}

Let $M$ be a connected, oriented, hyperbolic 3-manifold. 
The hyperbolic structure of $M$ yields the holonomy representation: 
$\text{Hol}_M : \pi_1(M)\to \text{Isom}^+\mathbb H^3$, 
where $\text{Isom}^+\mathbb H^3$ is the orientation-preserving isometry 
group of the hyperbolic 3-space $\mathbb H^3.$
Using the upper half-space model, $\text{Isom}^+\mathbb H^3$ 
is identified with 
$\text{PSL}(2,\mathbb C)=\text{SL}(2,\mathbb C)/\{\pm 1\}$. 
Thurston \cite{thurston} proved that 
$\text{Hol}_M$ can be lifted to $\text{SL}(2,\mathbb C)$.

There were two ways to obtain a linear representation 
so that we can consider the twisted Alexander invariants: 
either
(i) compose the holonomy representation $\text{Hol}_M$ with the adjoint representation 
    to obtain $\pi_1(M)\to \text{Aut }(\frak s\frak l_2(\mathbb C))\le \text{SL}(3,\mathbb C)$
    or 
(ii) lift $\text{Hol}_M$ to a representation $\pi_1(M)\to \text{SL}(2,\mathbb C)$. 
The former approach is the focus of \cite{dubois-yamaguchi1} and 
the latter is that of \cite{dunfield-friedl-jackson}.
In particular, the open problem 1 in Section 1.7 of \cite{dunfield-friedl-jackson} 
asks whether the twisted Alexander polynomial
associated with (ii) determines the volume of the complement 
of a knot.  
The results in the present paper might be regarded as an answer to that question.

For all $n>0$, 
there exists a unique (up to isomorphism) $n$-dimensional complex, 
irreducible representation of $\text{SL}(2,\mathbb C)$, say 
$$
\sigma_n:\text{SL}(2,\mathbb C)\to\text{SL}(n,\mathbb C).
$$
Our approach in this paper is to use this $n$-dimensional representation 
of $M$ as the composition of a lift of ${\rm Hol}_M$ with $\sigma_n$ according to 
\cite{menal-porti2}. 
Thus, some relevant results in the case of $n=2$ or $3$ 
can be found in \cite{dubois-yamaguchi1} and \cite{dunfield-friedl-jackson}.
In Section 1.5 of \cite{dunfield-friedl-jackson}, 
the fact that twisted Alexander invariants 
associated with representations (i) and (ii) 
behave differently is deemed very mysterious.
However, 
it might be due to nothing more than  
the difference between the representations $\sigma_n$ ($n$: even)
and $\sigma_n$ ($n$: odd).

Let us give a more precise description of the representations.
By Propositions 2.1 and 2.2 in \cite{menal-porti2}, 
there is a one-to-one correspondence between the lifts of $\text{Hol}_M$ 
and the spin structures on $M$. 
Thus, attached to a fixed spin structure $\eta$ on $M$, 
we have a representation :
$$
\text{Hol}_{(M,\eta)}: \pi_1(M,\eta)\to\text{SL}(2,\mathbb C).
$$
The vector space $\mathbb C^2$ has the standard action of $\text{SL}(2,\mathbb C)$.
It is known that the pair of the symmetric product $\text{Sym}^{n-1}(\mathbb C^2)$ 
and the induced action by $\text{SL}(2,\mathbb C)$ gives an $n$-dimensional 
irreducible representation of $\text{SL}(2,\mathbb C)$.
Concretely, 
let $V_n$ be the vector space of homogeneous polynomials on $\mathbb C^2$ 
with degree $n-1$; namely, 
$$
V_n
=
\text{span}_{\mathbb C}
\langle
x^{n-1},x^{n-2}y, \ldots , xy^{n-2},y^{n-1}
\rangle.
$$
Then, the symmetric product $\text{Sym}^{n-1}(\mathbb C^2)$ 
can be identified with $V_n$ and the action of $A\in\text{SL}(2,\mathbb C)$ 
is expressed as
$$
A\cdot p(x,y)
=
p(x',y')
\hspace{0.5cm}
\text{where}
\begin{pmatrix}
x' \\
y'
\end{pmatrix}
=
A^{-1}
\begin{pmatrix}
x \\
y
\end{pmatrix}
$$
where $p(x,y)$ is a homogeneous polynomial.
We denote by $(V_n,\sigma_n)$ the representation 
given by this action of $\text{SL}(2,\mathbb C)$, 
where $\sigma_n$ is the homomorphism 
from $\text{SL}(2,\mathbb C)$ to $\text{GL}(V_n)$.
It is known that each representation $(V_n,\sigma_n)$ 
turns into an irreducible $\text{SL}(n,\mathbb C)$-representation 
of $\text{SL}(2,\mathbb C)$ and that every irreducible 
$n$-dimensional representation of $\text{SL}(2,\mathbb C)$ 
is equivalent to $(V_n,\sigma_n)$.
Composing $\text{Hol}_{(M,\eta)}$ with $\sigma_n$, 
we obtain the representation
$$
\rho_n:
\pi_1(M,\eta)\to\text{SL}(n,\mathbb C).
$$

In the following section, 
we discuss the twisted Alexander invariants and 
Reidemeister torsions associated with this representation 
$\rho_n$. 

\begin{remark}
There have been several studies recently on Reidemeister torsions 
and twisted Alexander polynomials associated 
with an $\text{SL}(n,\mathbb C)$-
representation of fundamental groups.
These are called higher-dimensional Reidemeister torsions
or twisted Alexander polynomials. 
See \cite{tran-yamaguchi1,tran-yamaguchi2,yamaguchi2,yamaguchi3}.
\end{remark}

\section{Results of  Menal-Ferrer and Porti}\label{sec:menal-porti}
In this section, we give a short review of the relevant work 
by Menal-Ferrer and Porti
\cite{menal-porti1, menal-porti2}.

Suppose $M$ is a complete, oriented, hyperbolic 3-manifold 
of finite volume. 
Thus, $M$ is the interior of a compact manifold 
whose boundary consists of tori $T_1,\ldots,T_b$; 
that is, 
$\bdd(\text{cl}(M))=T_1\cup\cdots\cup T_b$. 
In what follows, we often abbreviate $\bdd(\text{cl}(M))$ to $\bdd M$.

Let us consider the homology groups of $M$ 
twisted by $\rho_n$ that is introduced in Section \ref{sec:holonomy}.
Furthermore, let $V_n$ be the vector space stated in Section \ref{sec:holonomy}, 
that is, 
the vector space on which $\rho_n(\gamma)$ acts 
for $\gamma\in\pi_1(M)$.

\begin{proposition}[Corollaries 3.6 and 3.7 in \cite{menal-porti1}]\label{prop:dim}
Suppose $\rho_n$ is associated with an acyclic spin structure. 
\begin{enumerate}
\item
If $n$ is even, then
$\dim_{\mathbb C}H^i(\bdd M;V_{n})=\dim_{\mathbb C}H^i(M;V_{n})=0$ for $i=0,1,2$,
\item
If $n$ is odd, 
then
$\dim_{\mathbb C}H^0(\bdd M;V_{n})=\dim_{\mathbb C}H^2(\bdd M;V_{n})=b$, 
$\dim_{\mathbb C}H^1(\bdd M;V_{n})=2b$, 
$H^0(M;V_{n})=0$ 
and 
$\dim_{\mathbb C}H^i(M;V_{n})=b$
for $i=1,2$. 
\end{enumerate}
\end{proposition}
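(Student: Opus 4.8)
The plan is to compute the cohomology of $\partial M$ directly from the structure of the peripheral holonomy, to dispose of $H^0(M;V_n)$ and $H^3(M;V_n)$ by elementary arguments, and then to let Poincar\'e--Lefschetz duality together with one genuinely geometric vanishing statement propagate this information to $H^1(M;V_n)$ and $H^2(M;V_n)$.

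\emph{The boundary.} Since $\partial M=T_1\cup\cdots\cup T_b$ and each $T_j$ is a $K(\mathbb Z^2,1)$, we have $H^*(\partial M;V_n)=\bigoplus_{j=1}^{b}H^*(\mathbb Z^2;V_n)$, the group cohomology of $\pi_1(T_j)\cong\mathbb Z^2$ acting through $\rho_n$. The restriction of the holonomy to $\pi_1(T_j)$ is a rank-two parabolic group, so its lift is conjugate into $\{\pm U:U\text{ unipotent upper triangular}\}$, and the hypothesis that the spin structure is acyclic says exactly that along every cusp this lift has no nonzero fixed vector in $\mathbb C^2$. Using $\sigma_n(-\mathrm{Id})=(-1)^{n-1}\mathrm{Id}_{V_n}$: if $n$ is odd then $V_n$ factors through $\mathrm{PSL}(2,\mathbb C)$, the spin structure is irrelevant, and $\sigma_n$ of a nontrivial parabolic is a single unipotent Jordan block on $V_n$, so the $\mathbb Z^2$-invariants and coinvariants are each one dimensional; this gives $H^0(T_j;V_n)\cong H^2(T_j;V_n)\cong\mathbb C$ and then $H^1(T_j;V_n)\cong\mathbb C^2$ from $\chi(T_j)=0$ and Poincar\'e duality on $T_j$. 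If $n$ is even, then acyclicity makes $\sigma_n$ of a peripheral generator equal to $-(\text{unipotent})$, with $-1$ as its only eigenvalue, so there are no nonzero invariants or coinvariants, $H^0(T_j;V_n)=H^2(T_j;V_n)=0$, and $H^1(T_j;V_n)=0$ too. Summing over $j$ gives the stated dimensions of $H^*(\partial M;V_n)$.

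\emph{The interior.} First, $\pi_1(M)$ is a nonelementary Kleinian group, hence Zariski dense in $\mathrm{SL}(2,\mathbb C)$, and $V_n$ is a nontrivial irreducible $\mathrm{SL}(2,\mathbb C)$-module for $n\ge2$, so $H^0(M;V_n)=(V_n)^{\pi_1(M)}=0$. Next, $M$ is the interior of a compact $3$-manifold with nonempty boundary and hence is homotopy equivalent to a $2$-complex, whence $H^3(M;V_n)=0$; and $\chi(M)=\tfrac12\chi(\partial M)=0$, so $\chi(M;V_n)=n\,\chi(M)=0$ and therefore $\dim_{\mathbb C}H^1(M;V_n)=\dim_{\mathbb C}H^2(M;V_n)=:a$. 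Finally $V_n$ is self-dual as an $\mathrm{SL}(2,\mathbb C)$-module (it carries an invariant nondegenerate bilinear form), so Poincar\'e--Lefschetz duality gives $H^i(M,\partial M;V_n)\cong H^{3-i}(M;V_n)^{*}$; in particular $H^0(M,\partial M;V_n)=H^3(M,\partial M;V_n)=0$ and $\dim H^i(M,\partial M;V_n)=a$ for $i=1,2$.

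\emph{Gluing, and the main obstacle.} Let $\iota^{*}\colon H^1(M;V_n)\to H^1(\partial M;V_n)$ be restriction. The ``half lives, half dies'' principle --- the image of $\iota^{*}$ is isotropic for the cup-product (symplectic) pairing on $H^1(\partial M;V_n)$ by Stokes, while by the duality above the annihilator of that image is $\ker\bigl(H^1(\partial M;V_n)\to H^2(M,\partial M;V_n)\bigr)$, which equals the image itself by exactness --- shows that $\mathrm{im}\,\iota^{*}$ is Lagrangian, so $\dim\mathrm{im}\,\iota^{*}=\tfrac12\dim H^1(\partial M;V_n)$, namely $b$ for $n$ odd and $0$ for $n$ even. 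The step that genuinely uses hyperbolicity, and the one I expect to require the most care, is the \emph{injectivity} of $\iota^{*}$: its kernel is the interior cohomology $\mathrm{im}\bigl(H^1(M,\partial M;V_n)\to H^1(M;V_n)\bigr)$, which for a finite-volume hyperbolic $3$-manifold is identified with the reduced $L^2$-cohomology $\overline H^1_{(2)}(M;V_n)$, and the latter vanishes by a Bochner/Matsushima--Murakami type vanishing theorem --- the Weitzenb\"ock curvature operator attached to $\mathrm{Sym}^{n-1}(\mathbb C^2)$ is positive, the borderline case being the adjoint representation, where it amounts to the rigidity of the hyperbolic structure relative to the cusps (alternatively this can be reached from Raghunathan's vanishing for the closed Dehn fillings by a semicontinuity argument). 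Granting injectivity, $a=\dim\mathrm{im}\,\iota^{*}$ equals $b$ for $n$ odd and $0$ for $n$ even, hence so does $\dim H^2(M;V_n)$; combined with $H^0(M;V_n)=H^3(M;V_n)=0$ and the boundary computation this yields (1) and (2). (As a consistency check for $n$ odd: the long exact sequence of $(M,\partial M)$ with $H^0(M;V_n)=0$ embeds $\mathbb C^b\cong H^0(\partial M;V_n)$ into $H^1(M,\partial M;V_n)\cong H^2(M;V_n)^{*}$, giving $a\ge b$ independently.) Finally, one further point that must be addressed for (1) not to be vacuous is the existence of an acyclic spin structure on $M$, which is a separate $\mathbb Z/2$-cohomological argument.
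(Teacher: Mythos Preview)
The paper does not prove this proposition itself; it is quoted verbatim as Corollaries~3.6 and~3.7 of Menal-Ferrer and Porti, \emph{Twisted cohomology for hyperbolic three manifolds}. Your sketch is correct and is essentially the argument carried out in that reference: a direct computation on each boundary torus via the Jordan form of a parabolic on $\mathrm{Sym}^{n-1}(\mathbb C^2)$, Euler characteristic and Poincar\'e--Lefschetz duality to reduce the interior computation to a single unknown, and then the genuinely analytic input that the ``interior'' part of $H^1(M;V_n)$ (equivalently the kernel of $\iota^{*}$) vanishes. The latter is exactly what Menal-Ferrer and Porti establish by identifying the image of $H^1_c\to H^1$ with $L^2$-cohomology and invoking a Matsushima--Murakami/Weitzenb\"ock vanishing, so your identification of this as the load-bearing step is accurate.

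Two small points of phrasing worth tightening. First, in the even case ``acyclicity makes $\sigma_n$ of a peripheral generator equal to $-(\text{unipotent})$'' should read ``of \emph{some} peripheral element''; acyclicity only guarantees that the sign homomorphism $\pi_1(T_j)\to\{\pm1\}$ is nontrivial, not that a given generator hits $-1$. This is harmless for your conclusion, since invariants for $\mathbb Z^2$ are contained in the invariants for any single element. Second, your closing remark about the existence of an acyclic spin structure is a comment on when part~(1) applies, not part of the proposition itself, which takes the acyclic spin structure as a hypothesis; the paper treats that existence question separately via condition~(\ref{eq:menal-porti}).
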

Moreover, 
Menal-Ferrer and Porti determined a basis of the homology groups. 
Note that Poincar\'e duality with coefficients in $\rho_n$ holds
(Corollary 3.7 in \cite{menal-porti2}).

\begin{proposition}[Proposition 4.6 in \cite{menal-porti2}]\label{prop:basis}
Suppose $n$ $(\ge 3)$ is odd. 
Let $G_{\ell}<\pi_1(M)$ be some fixed realization of the fundamental group of $T_{\ell}$ 
as a subgroup of $\pi_1(M)$. 
For each $T_{\ell}$ choose a non-trivial cycle $\theta_{\ell}\in H_1(T_{\ell};\mathbb Z)$, 
and a non-trivial vector $v_{\ell}\in V_n$ fixed by $\rho_n(G_{\ell})$. 
If $i_{\ell}: T_{\ell}\to M$ denotes the inclusion, 
then the following holds: 
\begin{enumerate}
\item
A basis for $H_1(M;V_{n})$ is given by 
$$
\{i_{1*}([v_1\otimes\theta_1]), \ldots , i_{b*}([v_b\otimes\theta_b])\}.
$$
\item
Let $[T_{\ell}]\in H_2(T_{\ell};\mathbb Z)$ be a fundamental class of $T_{\ell}$. 
Then, a basis for $H_2(M;V_{n})$ is given by 
$$
\{i_{1*}([v_1\otimes T_1]), \ldots , i_{b*}([v_b\otimes\ T_b])\}.
$$
\end{enumerate}
\end{proposition}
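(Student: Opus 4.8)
The plan is to combine the dimension counts of Proposition~\ref{prop:dim}, Poincar\'e--Lefschetz duality with coefficients in $V_n$ (which is self-dual as an $\mathrm{SL}(2,\mathbb C)$-module, so that $H_k(M;V_n)\cong H^{3-k}(M,\partial M;V_n)$ and homology and cohomology have the same dimension in each degree), and the long exact sequence of the pair $(M,\partial M)$. Together these give, for $n\ (\geq 3)$ odd attached to an acyclic spin structure, $\dim_{\mathbb C}H_i(M;V_n)=\dim_{\mathbb C}H_i(M,\partial M;V_n)=b$ for $i=1,2$, $H_0(M;V_n)=H_3(M,\partial M;V_n)=0$, and $H_*(\partial M;V_n)=\bigoplus_{\ell=1}^{b}H_*(T_\ell;V_n)$ with $\dim_{\mathbb C}H_0(T_\ell;V_n)=\dim_{\mathbb C}H_2(T_\ell;V_n)=1$ and $\dim_{\mathbb C}H_1(T_\ell;V_n)=2$. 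In each degree I would first identify the named cycle with the expected generator of the homology of the individual torus $T_\ell$, and then show that an inclusion-induced map between two $b$-dimensional spaces is an isomorphism.

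I would settle (2) first. The local input is that for $n$ odd the restriction $\rho_n|_{G_\ell}=\sigma_n\circ\mathrm{Hol}_{(M,\eta)}|_{G_\ell}$ is unipotent and non-trivial: a peripheral element is sent by $\mathrm{Hol}_{(M,\eta)}$ to $\pm$ a parabolic, $\sigma_n(-I)=\mathrm{Id}$ since $n-1$ is even, $\sigma_n$ of a parabolic is unipotent, and the two peripheral generators $\mu_\ell,\lambda_\ell$ of $G_\ell$ lie in a common one-parameter unipotent subgroup whose cusp shape is non-real. Hence the fixed subspace $V_n^{\rho_n(G_\ell)}$ is one-dimensional, i.e.\ equals $\mathbb C v_\ell$. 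Poincar\'e duality on $T_\ell$ (equivalently, the cellular chain complex of the torus) identifies $H_2(T_\ell;V_n)\cong V_n^{\rho_n(G_\ell)}$ via $v\mapsto[v\otimes T_\ell]$, so $[v_\ell\otimes T_\ell]$ generates $H_2(T_\ell;V_n)$ and $H_2(\partial M;V_n)=\bigoplus_\ell\mathbb C[v_\ell\otimes T_\ell]$. In the exact sequence $H_3(M,\partial M;V_n)\to H_2(\partial M;V_n)\to H_2(M;V_n)\to H_2(M,\partial M;V_n)$ (the middle arrow being $i_*$) the first term vanishes, so $i_*$ is injective; being a map between $b$-dimensional spaces it is an isomorphism, and this is (2).

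For (1), note first that $v_\ell\otimes\theta_\ell$ is a $1$-cycle, since the boundary of $v_\ell\otimes\gamma$, for $\gamma$ a loop representing $\theta_\ell$, is $(\rho_n(\gamma)^{-1}-\mathrm{Id})v_\ell\otimes(\mathrm{pt})=0$ because $v_\ell$ is fixed by $\rho_n(G_\ell)$. Computing $H_1(T_\ell;V_n)$ from the cellular chain complex with the explicit unipotent action, one finds that $[v_\ell\otimes\mu_\ell]$ and $[v_\ell\otimes\lambda_\ell]$ are proportional, spanning a single line $L_\ell\subset H_1(T_\ell;V_n)$, and that $[v_\ell\otimes\theta_\ell]$ is a non-zero element of $L_\ell$ for every non-trivial $\theta_\ell$ (the non-real cusp shape is what rules out a degeneration, which is why the statement does not depend on the choice of $\theta_\ell$). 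Feeding (2) back into the long exact sequence forces $H_2(M;V_n)\to H_2(M,\partial M;V_n)$ to be zero, so $\partial:H_2(M,\partial M;V_n)\to H_1(\partial M;V_n)$ is injective with $b$-dimensional image and $i_*:H_1(\partial M;V_n)\to H_1(M;V_n)$ is onto with $b$-dimensional kernel $K=\mathrm{im}\,\partial$. Thus (1) becomes equivalent to the transversality statement $K\cap\bigoplus_\ell L_\ell=0$ inside $H_1(\partial M;V_n)=\bigoplus_\ell H_1(T_\ell;V_n)$; moreover both $K$ and $\bigoplus_\ell L_\ell$ are Lagrangian for the block-diagonal intersection form on $H_1(\partial M;V_n)$ (for $K$ by the twisted ``half lives, half dies'' principle, for $\bigoplus_\ell L_\ell$ because the form is alternating and block-diagonal while each $L_\ell$ is a line), so the assertion is that these two Lagrangians are complementary.

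I expect this last transversality to be the main obstacle; everything before it is bookkeeping with the long exact sequence and dimension counts. I would establish it by pinning $K$ down cusp by cusp: showing $K=\bigoplus_\ell K_\ell$ with each $K_\ell\subset H_1(T_\ell;V_n)$ a line distinct from $L_\ell$. Equivalently, the inclusion $T_\ell\hookrightarrow M$ induces on $H_1(\cdot;V_n)$ a map with one-dimensional image, the $b$ images are independent in $H_1(M;V_n)$, and they are spanned by the classes $i_{\ell*}[v_\ell\otimes\theta_\ell]$; dually, under Poincar\'e duality on $\partial M$, this says the restriction $H^1(M;V_n)\to\bigoplus_\ell H^1(T_\ell;V_n)$ carries $H^1(M;V_n)$ isomorphically onto a direct sum of distinguished lines, one in each $H^1(T_\ell;V_n)$, transverse to the annihilator of $L_\ell$. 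This is precisely where the hypotheses that the representation comes from the holonomy and that $n$ is odd enter, through the cusp-cohomology and infinitesimal-rigidity results of Menal-Ferrer and Porti recalled in Section~\ref{sec:menal-porti} (see \cite{menal-porti1,menal-porti2}). Granting it, $H_1(M;V_n)\cong\bigoplus_\ell H_1(T_\ell;V_n)/K_\ell$ with each $L_\ell$ mapping isomorphically onto the $\ell$-th summand, so $\{i_{\ell*}[v_\ell\otimes\theta_\ell]\}_{\ell=1}^{b}$ is a basis of $H_1(M;V_n)$, proving (1).
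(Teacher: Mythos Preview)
The paper does not give its own proof of this proposition: it is quoted verbatim as Proposition~4.6 of \cite{menal-porti2}, with no argument beyond the citation. So there is nothing in the present paper to compare your attempt against; what follows is an assessment of your sketch on its own merits.

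Your proof of (2) is correct and complete. The identification $H_2(T_\ell;V_n)\cong V_n^{\rho_n(G_\ell)}=\mathbb C v_\ell$ is right (for $n$ odd the peripheral image is a single regular unipotent one-parameter subgroup), and the vanishing $H_3(M,\partial M;V_n)\cong H^0(M;V_n)=0$ plus the dimension count forces $i_*$ to be an isomorphism in degree~$2$.

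For (1), your local analysis on each torus is also correct: a short cellular computation shows $[v_\ell\otimes\mu_\ell]$ and $[v_\ell\otimes\lambda_\ell]$ differ by the cusp shape $\tau_\ell$, hence span a single line $L_\ell$, and $[v_\ell\otimes\theta_\ell]\neq 0$ for every nonzero $\theta_\ell\in H_1(T_\ell;\mathbb Z)$ precisely because $\tau_\ell\notin\mathbb R$. You then correctly reduce the statement to the transversality $K\cap\bigoplus_\ell L_\ell=0$ of two Lagrangians in $H_1(\partial M;V_n)$.

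The gap is that you do not prove this transversality. Being Lagrangian does not by itself make two half-dimensional subspaces complementary, and this is exactly where the hyperbolic hypothesis does real work. Your proposed justification---``the cusp-cohomology and infinitesimal-rigidity results of Menal-Ferrer and Porti recalled in Section~\ref{sec:menal-porti}''---is circular: the only such results recalled there are Proposition~\ref{prop:dim} (which you have already used) and Proposition~\ref{prop:basis} itself. What is actually needed from \cite{menal-porti2} is an identification of the image of $H^1(M;V_n)\hookrightarrow H^1(\partial M;V_n)$ with a specific line in each $H^1(T_\ell;V_n)$, obtained there via an $L^2$/Hodge-theoretic description of cusp cohomology; that is the substantive input, and it is not reproduced by anything in your sketch. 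In short: (2) is done, the reduction for (1) is sound, but the heart of (1) remains a black-box citation---which, to be fair, is exactly how the present paper treats the whole proposition.
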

Set 
$\bold h^1
=\{
i_{1*}([v_1\otimes\theta_1]), \ldots , i_{b*}([v_b\otimes\theta_b])
\},\,
\bold h^2
=\{
i_{1*}([v_1\otimes T_1]), \ldots , i_{b*}([v_b\otimes\ T_b])\}$, 
and 
$\bold h=\{\bold h^1,\bold h^2\}$. 
Furthermore, let $\eta$ be a spin structure of $M$.
Then, we define the following quotients:
\begin{align*}
\mathcal T_{2k+1}(M,\eta)
& :=
\frac{\text{Tor}(C_*(M;V_{2k+1}),\bold h)}{\text{Tor}(C_*(M;V_{3}),\bold h)}\hspace{0.3cm}\in\mathbb C^*/\{\pm 1\},\\
\mathcal T_{2k}(M,\eta)
& :=
\frac{\text{Tor}(C_*(M;V_{2k}),\bold h)}{\text{Tor}(C_*(M;V_{2}),\bold h)}
\hspace{0.3cm}\in\mathbb C^*/\{\pm 1\}.
\end{align*}
It is known that 
the quantity $\mathcal T_{2k+1}$ is independent of the spin structure
(Remark \ref{rem:factor}), 
and hence we shall denote it simply by $\mathcal T_{2k+1}(M)$. 
On the other hand, $H_i(M;V_{\rho_{2k}})=0$ for $i=1,2$ 
by Proposition \ref{prop:dim}, then 
we may define as follows: 

\begin{definition}\label{def:torsions}
\begin{align*}
\mathcal T_{2k+1}(M)
& :=
\frac{\text{Tor}(C_*(M;V_{2k+1}),\bold h)}{\text{Tor}(C_*(M;V_{3}),\bold h)}
\hspace{0.3cm}\in\mathbb C^*/\{\pm 1\},\\
\mathcal T_{2k}(M,\eta)
& :=
\frac{\text{Tor}(C_*(M;V_{2k}))}{\text{Tor}(C_*(M;V_{2}))}
\hspace{0.3cm}\in\mathbb C^*/\{\pm 1\}.
\end{align*}
\end{definition}
Note that Proposition 4.2 in \cite{menal-porti2} proves 
that $\mathcal T_{2k+1}(M)$ is independent of 
the choice of $\bold h$.
These invariants are called 
{\it the higher-dimensional Reidemeister torsion invariants}.

We say that a spin structure $\eta$ on $M$ is \textit{positive} on $T_{\ell}$ 
if, for all $\gamma\in \pi_1(T_{\ell})$, 
we have $\text{trace Hol}_{(M,\eta)}(\gamma)=+2$. 
Otherwise, we say that $\eta$ is \textit{non-positive}. 
A spin structure $\eta$ is \textit{acyclic} 
if $\eta$ is non-positive on each $T_{\ell}$.

Theorem 7.1 in \cite{menal-porti2} asserts the following. 

\begin{theorem}[Theorem 7.1 in \cite{menal-porti2}]\label{thm:menal-porti}
Let $M$ be a connected, complete, hyperbolic 3-manifold 
of finite volume. Then 
$$
\lim_{k\to\infty}\frac{\log|\mathcal T_{2k+1}(M)|}{(2k+1)^2}
=
\frac{{\rm Vol}(M)}{4\pi}.
$$
In addition, if $\eta$ is an acyclic spin structure of $M$, 
then 
$$
\lim_{k\to\infty}\frac{\log|\mathcal T_{2k}(M,\eta)|}{(2k)^2}
=
\frac{{\rm Vol}(M)}{4\pi}.
$$
\end{theorem}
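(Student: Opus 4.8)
The statement is that of Menal-Ferrer and Porti; the plan is to reduce the finite-volume case to the \emph{closed} case via Thurston's hyperbolic Dehn surgery theorem \cite{thurston}. The one external ingredient is the analogous asymptotic for a closed, oriented, hyperbolic $3$-manifold $N$: with $V_n$ the $\pi_1(N)$-module obtained by composing a lift of $\text{Hol}_N$ with $\sigma_n$, the complex $C_*(N;V_n)$ is acyclic for every $n\ge 2$ (by the vanishing theorems for closed hyperbolic manifolds together with Poincar\'e duality) and
$$
\lim_{n\to\infty}\frac{\log|\text{Tor}(C_*(N;V_n))|}{n^2}=\frac{\text{Vol}(N)}{4\pi}.
$$
This is M\"uller's volume formula \cite{muller2}, obtained through the twisted Ruelle zeta function, together with the Cheeger--M\"uller theorem for unimodular representations \cite{muller1} that identifies analytic with Reidemeister torsion.

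First I would set up the Dehn filling approximation. By Thurston's theorem, for all but finitely many multislopes $\mathbf{r}=(r_1,\dots,r_b)$ the filled manifold $M_{\mathbf{r}}$ is closed and hyperbolic, $\text{Vol}(M_{\mathbf{r}})\nearrow\text{Vol}(M)$ as $\mathbf{r}\to\infty$, the hyperbolic structures converge geometrically, and the complex length $\lambda_\ell(\mathbf{r})$ of the core geodesic $c_\ell$ of the solid torus $N(c_\ell)$ glued in at the $\ell$-th cusp satisfies $\text{Re}\,\lambda_\ell(\mathbf{r})\to 0$ (Neumann--Zagier). Next I would prove a gluing formula for the torsion: writing $M_{\mathbf{r}}=M\cup_{\bigcup_\ell T_\ell}\bigcup_\ell N(c_\ell)$ and using multiplicativity of Reidemeister torsion along the associated Mayer--Vietoris sequence, with the homology bases of Propositions \ref{prop:dim} and \ref{prop:basis} installed on the pieces, one expresses $\text{Tor}(C_*(M_{\mathbf{r}};V_n))$ as $\text{Tor}(C_*(M;V_n),\bold h)$ times a factor from each $N(c_\ell)$, a factor from each torus $T_\ell$, and the torsion of the Mayer--Vietoris homology exact sequence. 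Dividing through by the same identity for $n=3$ (odd $n$) or $n=2$ (even $n$) turns $\text{Tor}(C_*(M;V_n),\bold h)$ into $\mathcal T_n(M)$ of Definition \ref{def:torsions} and removes the sign, basis, and (odd case) spin-structure ambiguities, while the subtracted term $\log|\text{Tor}(C_*(M;V_3),\bold h)|$ or $\log|\text{Tor}(C_*(M;V_2))|$ is an $n$-independent constant and so contributes nothing to the limit after division by $n^2$.

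Then I would evaluate the correction factors. Since $\pi_1(N(c_\ell))\cong\mathbb Z$ is generated by the core, $C_*(N(c_\ell);V_n)$ reduces to the two-term complex $V_n\xrightarrow{\sigma_n(\text{Hol}(c_\ell))-\text{Id}}V_n$, whose torsion --- in the odd case taken relative to the homology classes $[v_\ell\otimes\theta_\ell]$ and $[v_\ell\otimes c_\ell]$ --- is, up to the homology contribution, a product over the relevant $j$ of factors $\mu_\ell^{\,n-1-2j}-1$ with $\mu_\ell=e^{\lambda_\ell(\mathbf{r})/2}$, and an elementary estimate shows that its logarithm, divided by $n^2$, converges as $n\to\infty$ to a constant multiple of $\text{Re}\,\lambda_\ell(\mathbf{r})$. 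The boundary-torus factors are trivial by the product formula for $T^2$ (in the odd case after the compatible choice of bases), and the Mayer--Vietoris homology torsion is likewise bounded in $n$; both therefore disappear after division by $n^2$. Substituting into the gluing identity and letting $n\to\infty$ with $\mathbf{r}$ fixed gives, for every admissible filling,
$$
\lim_{n\to\infty}\frac{\log|\mathcal T_n(M)|}{n^2}=\frac{\text{Vol}(M_{\mathbf{r}})}{4\pi}-\sum_{\ell}\kappa_\ell(\mathbf{r}),
$$
where the left-hand side is intrinsic to $M$ and $\kappa_\ell(\mathbf{r})$ is a constant multiple of $\text{Re}\,\lambda_\ell(\mathbf{r})$, hence $\kappa_\ell(\mathbf{r})\to 0$; letting $\mathbf{r}\to\infty$ and using $\text{Vol}(M_{\mathbf{r}})\to\text{Vol}(M)$ yields the two asserted limits, the acyclicity hypothesis on $\eta$ being needed in the even case precisely so that, by Proposition \ref{prop:dim}, $C_*(M;V_{2k})$ is acyclic and $\mathcal T_{2k}(M,\eta)$ is defined.

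I expect the main obstacle to be the quantitative geometric input behind the preceding two paragraphs: one needs not only that $M_{\mathbf{r}}$ is hyperbolic with volume tending to $\text{Vol}(M)$, but that the holonomies $\text{Hol}_{(M_{\mathbf{r}},\eta)}$, the core complex lengths $\lambda_\ell(\mathbf{r})$, and the homology bases of Proposition \ref{prop:basis} all converge to their counterparts on $M$ at rates controlled well enough that $\kappa_\ell(\mathbf{r})\to 0$ and the Mayer--Vietoris homology torsion stays bounded uniformly in $\mathbf{r}$ --- this is where the Neumann--Zagier deformation estimates carry the weight. A secondary difficulty is purely homological: in the odd case none of the four pieces in the decomposition is acyclic, so the gluing formula must keep track of the Mayer--Vietoris long exact sequence in homology with bases built compatibly from the vectors $v_\ell$ and the cycles $\theta_\ell$ and $[T_\ell]$, and one must verify that this bookkeeping, together with Poincar\'e duality (Corollary 3.7 in \cite{menal-porti2}), produces exactly the normalization $\mathcal T_n(M)$ and no spurious extra factor.
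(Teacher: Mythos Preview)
The paper does not prove this theorem; it quotes it verbatim as Theorem~7.1 of \cite{menal-porti2} and uses it as a black box. So there is no in-paper proof to compare against. Your outline does match the strategy sketched in the introduction of the present paper: M\"uller's formula \cite{muller1,muller2} for closed hyperbolic $3$-manifolds, combined with Thurston's hyperbolic Dehn surgery to pass from the cusped $M$ to closed fillings $M_{\mathbf r}$, plus a Mayer--Vietoris gluing formula to isolate the contribution of $M$ from that of the filling solid tori.

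There is, however, a genuine gap in the way you have written the argument. In the gluing identity for $M_{\mathbf r}$, the representation used on the piece $M$ is not the holonomy of $M$ but the restriction to $\pi_1(M)$ of the holonomy of $M_{\mathbf r}$; this is a \emph{deformation} $\rho_n^{\mathbf r}$ of $\rho_n$ (reducible on the peripheral subgroups because the filling slope becomes trivial). Hence the torsion appearing on the $M$-side of your gluing formula is $\mathcal T_n^{\mathbf r}(M)$, computed with respect to $\rho_n^{\mathbf r}$, not $\mathcal T_n(M)$. Your displayed equation
\[
\lim_{n\to\infty}\frac{\log|\mathcal T_n(M)|}{n^2}=\frac{\text{Vol}(M_{\mathbf r})}{4\pi}-\sum_\ell\kappa_\ell(\mathbf r)
\]
therefore does not follow, and the claim that ``the left-hand side is intrinsic to $M$'' is false as stated: the left-hand side depends on $\mathbf r$ through the representation. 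What is actually needed is an interchange of the limits $n\to\infty$ and $\mathbf r\to\infty$, and this is exactly where the hard work in \cite{menal-porti2} lies. One must show, with uniformity in $n$, that $\mathcal T_n^{\mathbf r}(M)\to\mathcal T_n(M)$ as $\mathbf r\to\infty$; the Neumann--Zagier estimates you mention are part of this, but one also needs control on how the invariant vectors $v_\ell$, the bases $\bold h$, and hence the torsions themselves vary along the deformation, together with the fact that the solid-torus contribution $\log|\prod_j(\mu_\ell^{n-1-2j}-1)|/n^2$ is uniformly small when $\text{Re}\,\lambda_\ell(\mathbf r)$ is small. You flag these issues in your ``obstacles'' paragraph, but the body of the argument is written as if they were already resolved.
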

As noted in Remark \ref{rmk:sign}, the sign of the right-hand sides 
is positive. 

Let $\nu$ be a simple closed curve in $\bdd M$. 
The holonomy representation can be lifted to $\text{SL}(2,\mathbb C)$, 
but we do not know whether a holonomy representation 
extends to a holonomy representation on $M_{\nu}$, 
where $M_{\nu}$ is the manifold obtained by the Dehn filling 
along $\nu$. 
The problem is crucial 
because Theorem \ref{thm:menal-porti}, 
which we will use and 
is given if $\eta$ is acyclic, is obtained via
Thurston's hyperbolic Dehn surgery theorem.
Corollary 3.4 in \cite{menal-porti2} gives a sufficient condition 
to guarantee the existence of an acyclic spin structure.
The condition is the following:
\begin{equation}\label{eq:menal-porti}
\begin{split}
&\text{
For each boundary component $T_{\ell}$ of $M$, 
the map: }
H_1(T_{\ell};\mathbb Z/2\mathbb Z)\to
H_1(M;\mathbb Z/2\mathbb Z)\\
&\text{
induced by the inclusion has non-trivial kernel.
}
\end{split}
\end{equation}

\begin{remark}
If $\bdd M$ consists of only one torus (i.e., $b=1$),
then $M$ satisfies the condition (\ref{eq:menal-porti});
see Lemma 6.8 in \cite{hempel}, for example. 
\end{remark}

Because we focus on knots and links, we define a family of knots and links 
satisfying the condition (\ref{eq:menal-porti}).
See Chapter 5D in \cite{rolfsen}.

\begin{definition}\label{def:link}
Let $L$ be a $b$-component link in $S^3$. 
One calls $L=K_1\cup \cdots \cup K_b$ an \textit{algebraically split link} if 
the linking numbers $\ell k(K_i,K_j)$ are zero for all $i,\,j$.
\end{definition}

Because of a Seifert surface for a knot, 
a knot becomes an algebraically split link. 
The Whitehead link, the Borromean link and a boundary link 
are examples of algebraically split links.

\section{Homology with local coefficients
corresponding to twisted Alexander invariants}\label{sec:dubois}

As in the previous section, we let $M$ be a complete, 
oriented, hyperbolic 3-manifold of finite volume
whose boundary consists of tori $T_1\cup\cdots\cup T_{b}$.
In Propositions \ref{prop:dim} and \ref{prop:basis},
we studied the homology group $H_*(M;V_n)$. 
In this section, we will investigate $H_*(M;V_n(t))$.
According to the description in Section \ref{sec:pre}, 
let $\alpha$ ($\alpha_h$ resp.) be a surjective homomorphism of 
$\pi_1(M)\to\mathbb Z$  
($H_1(M,\mathbb Z)\to\mathbb Z$ resp.).
and suppose that 
$\alpha\otimes\rho_n:\pi_1(M)\to \text{Aut}(V_n(t))$ 
and 
$C_*(M;V_n(t))
=V_n(t)\underset{\mathbb Z[\pi_1(M)]}{\otimes}C_*(\widetilde{M};\mathbb Z)$
are defined 
as in (\ref{eq:rep}) and (\ref{eq:chain2}).

Let $i$ be the inclusion $\partial M\to M$.
Suppose that the composition 
$\alpha_h\circ (i|_{T_{\ell}})_*:H_1(T_{\ell};\mathbb Z)\cong\mathbb Z\oplus\mathbb Z
\to \mathbb Z$
is nontrivial for any $\ell$.
We choose two loops on $T_{\ell}$, 
say $\lambda_{\ell}$ and $\mu_{\ell}$
such that $[\lambda_{\ell}]$ generates the kernel of 
$\alpha_h\circ (i|_{T_{\ell}})_*$
and $\mu_{\ell}$ intersects $\lambda_{\ell}$ transversely 
in s single point.
The loop $\lambda_{\ell}$ is called the {\it longitude} 
and $\mu_{\ell}$ is called the {\it meridian}.
\begin{equation}\label{eq:dubois-yamaguchi}
\text{Based on these, we denote } \alpha(\mu_{\ell})=t^{a(\ell)}.
\end{equation}

The purpose of this section is to prove the next proposition.
Note that the contents in this section are due to  
Section 3.2.1 in \cite{dubois-yamaguchi1}
and Sections 2 and 3 in \cite{kirk-livingston}.

\begin{proposition}\label{prop:homology1}
$H_k(M;V_n(t))=0$ for any $k$.
\end{proposition}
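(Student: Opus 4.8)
The plan is to prove acyclicity of $C_*(M;V_n(t))$ by combining the known (untwisted, i.e. $t=1$) homology computations of Menal-Ferrer and Porti with a long exact sequence of the pair $(M,\partial M)$, and an explicit analysis of the boundary tori. First I would recall that $V_n(t) = \mathbb F(t)\otimes_{\mathbb F} V_n$ is a module over the PID $\mathbb F(t) = \mathbb C(t)$, so that the twisted homology $H_k(M;V_n(t))$ is a finite-dimensional $\mathbb C(t)$-vector space, and a standard universal-coefficient / localization argument shows that $H_k(M;V_n(t)) = 0$ if and only if the corresponding Alexander module is $\mathbb C(t)$-torsion. The key point is that, because $\mathbb C(t)$ is a field of characteristic $0$, $\dim_{\mathbb C(t)} H_k(M;V_n(t))$ is governed by the Euler characteristic together with the behaviour at a generic value of $t$, and specializing $t$ away from a finite bad set relates it to the untwisted computation — but rather than a naive specialization I would argue directly.

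The main step is to handle the boundary. For each torus $T_\ell$, the hypothesis (stated just before the proposition) is that $\alpha_h\circ (i|_{T_\ell})_*$ is nontrivial, so the restricted coefficient system on $T_\ell$ is nontrivial on the meridian direction $\mu_\ell$, meaning $\alpha(\mu_\ell) = t^{a(\ell)}$ with $a(\ell)\neq 0$. I would compute $H_*(T_\ell;V_n(t))$ directly: using the CW-structure of the torus and the fact that $\rho_n(G_\ell)$ is (conjugate to) a group of upper-triangular unipotent matrices (the holonomy of a cusp is parabolic), one checks that $I - t^{a(\ell)}\rho_n(\mu_\ell)$ is invertible over $\mathbb C(t)$ — its determinant is a nonzero power of $(1-t^{a(\ell)})$ times a unit, hence nonzero in $\mathbb C(t)$. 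Feeding this into the chain complex of the torus gives $H_*(T_\ell;V_n(t)) = 0$ for all $\ast$, and therefore $H_*(\partial M;V_n(t)) = 0$. Then the long exact sequence of the pair $(M,\partial M)$ collapses to give $H_k(M;V_n(t)) \cong H_k(M,\partial M;V_n(t))$ for all $k$, and Poincaré–Lefschetz duality (valid with these coefficients, as noted for the $t=1$ case and which carries over since $\mathbb C(t)$ is a field and $V_n(t)$ carries the appropriate pairing coming from $\sigma_n$) identifies $H_k(M,\partial M;V_n(t))$ with $H^{3-k}(M;V_n(t))$, i.e. with the dual of $H_{3-k}(M;V_n(t))$.

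From there the argument is a dimension count. Since $M$ is an open 3-manifold homotopy equivalent to a $2$-complex, $H_k(M;V_n(t)) = 0$ for $k\geq 3$ and for $k=0$ one checks $H_0(M;V_n(t)) = (V_n(t))_{\pi_1(M)} = 0$ because the action is already nontrivial (the abelianization composed with $\alpha$ acts by a nonconstant power of $t$, and $V_n(t)^{\pi_1} $ would have to be fixed in particular by some $\mu_\ell$, which we just showed is impossible). The duality isomorphism $H_1(M;V_n(t)) \cong H_2(M;V_n(t))^*$ together with the Euler characteristic identity $\sum_k (-1)^k \dim_{\mathbb C(t)} H_k(M;V_n(t)) = n\cdot\chi(M) = 0$ (as $\chi(M)=0$ for a manifold with toral boundary) then forces $\dim H_1 = \dim H_2$, and one more input — that $H_2(M;V_n(t))\hookrightarrow H_2(\partial M;V_n(t)) = 0$ from the exact sequence, or equivalently that $H_1(M;V_n(t))$ injects into $H_1(M,\partial M;V_n(t))\cong H_2(M;V_n(t))^*$ in a way that is incompatible with both being nonzero — closes the argument. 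I expect the genuine obstacle to be verifying cleanly that $H_*(\partial M;V_n(t))=0$, i.e. the parabolic/unipotent computation showing $I - t^{a(\ell)}\sigma_n(\mathrm{Hol}(\mu_\ell))$ is invertible over $\mathbb C(t)$ while keeping track of the two generators $\mu_\ell,\lambda_\ell$ simultaneously; once the boundary vanishes, the pair sequence plus duality plus $\chi=0$ does the rest almost formally.
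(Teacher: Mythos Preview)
Your computation that $H_*(\partial M;V_n(t))=0$ is correct and clean: since $\alpha(\mu_\ell)=t^{a(\ell)}$ with $a(\ell)\neq 0$ and $\rho_n(\mu_\ell)$ is unipotent, $I-t^{a(\ell)}\rho_n(\mu_\ell)^{-1}$ is invertible over $\mathbb C(t)$, and then $\chi(T_\ell)=0$ kills $H_1$. From there the pair sequence and Poincar\'e--Lefschetz duality do give $H_k(M;V_n(t))\cong H_{3-k}(M;V_n(t))^*$, and together with $H_0=H_3=0$ and $\chi(M)=0$ you obtain $\dim H_1=\dim H_2$. But this is where the argument stops: those constraints are perfectly compatible with $\dim H_1=\dim H_2=d$ for any $d\ge 0$, so nothing yet forces vanishing. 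Your ``one more input'' does not exist as stated --- in the long exact sequence of the pair the map goes $H_2(\partial M)\to H_2(M)$, not the other way, so there is no injection $H_2(M;V_n(t))\hookrightarrow H_2(\partial M;V_n(t))$; and the alternative formulation, that $H_1(M;V_n(t))\hookrightarrow H_2(M;V_n(t))^*$, is simply the duality isomorphism again and imposes no new restriction.

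What is missing is precisely the geometric content of Proposition~\ref{prop:basis}: at $t=1$ the nonvanishing part of $H_2(M;V_n)$ (for $n$ odd) is generated by classes pushed in from the boundary tori. The paper exploits this via the Wang exact sequence of the infinite cyclic cover $\overline{M}$: if $H_2(\overline{M};V_n)$ had a free $\mathbb C[t,t^{-1}]$-part, the cokernel of $(t-1)$ on $H_2$ would be nonzero, producing a nonzero element of $\ker\delta\subset H_2(M;V_n)$; but every class in $H_2(M;V_n)$ is a combination of the $[v_\ell\otimes T_\ell]$, and the explicit connecting-map computation $\delta_\partial([v_\ell\otimes T_\ell])=[(1+t+\cdots+t^{a(\ell)-1})\otimes v_\ell\otimes\lambda_\ell]$ shows these do \emph{not} lie in $\ker\delta$ (their images survive under $t=1$ in $H_1(M;V_n)$, again by Proposition~\ref{prop:basis}). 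Your framework over $\mathbb C(t)$ cannot see this, because once $H_*(\partial M;V_n(t))=0$ the boundary contributes nothing and you have lost exactly the information that pins down $H_2(M)$. For $n$ even your outline essentially works (or one can simply invoke semicontinuity, since $H_*(M;V_n)=0$ at $t=1$), but for $n$ odd you need to bring Proposition~\ref{prop:basis} back into the argument.
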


$M$ has the infinite cyclic cover $\overline{M}$ corresponding to $\alpha$, 
whose boundary consists of 
$b$ component annuli $A_1\cup\cdots\cup A_{b}$.
Then, $\pi_1(\overline{M})=\ker\alpha$ and we have the 
following short exact sequence
\begin{equation*}
1\to\pi_1(\overline{M})\to\pi_1(M)\xrightarrow{\alpha}\mathbb Z\to 1.
\end{equation*}
We denote by the same symbol $\rho_n$ 
the restriction of the representation $\rho_n$ of $\pi_1(M)$
to $\pi_1(\overline{M})$, 
and $\widetilde{M}$ is also the universal cover of $\overline{M}$ 
with covering group $\pi_1(\overline{M})$.
As noted in Section \ref{sec:pre}, $C_*(\widetilde{M};\mathbb Z)$ 
is a left $\mathbb Z[\pi_1(M)]$-module, 
by restriction, also a left $\mathbb Z[\pi_1(\overline{M})]$-module 
on the cells of $\overline{M}$. 
As in (\ref{eq:chain1}), we can form the chain complex
\begin{equation}\notag
C_*(\overline{M};V_n)=V_n\underset{\mathbb Z[\pi_1(\overline{M})]}{\otimes}
C_*(\widetilde{M};\mathbb Z). 
\end{equation}
Furthermore, by the same action as (\ref{eq:module2}), 
we have the chain complex 
\begin{equation}\notag
C_*(M;V_n[t,t^{-1}])=(\mathbb C[t,t^{-1}]
\underset{\mathbb C}{\otimes}V_n)\underset{\mathbb Z[\pi_1(M)]}{\otimes}
C_*(\widetilde{M};\mathbb Z).
\end{equation}
It is known that 
$C_*(\overline{M};V_n)\cong C_*(M;V_n[t,t^{-1}])$
(cf. Theorem 2.1 in \cite{kirk-livingston}), 
and 
there is the natural inclusion 
$C_*(M;V_n[t,t^{-1}])\subset C_*(M;V_n(t))$. 
Moreover, 
the universal coefficient theorem implies that 
$H_*(M;V_n(t))=V_n(t)\underset{\mathbb C[t,t^{-1}]}{\otimes}
H_*(M;V_n[t,t^{-1}])$. 
Thus, we have

\begin{lemma}\label{lem:nofreepart}
$H_*(M;V_n(t))=0$ if and only if 
$H_*(\overline{M};V_n)\cong H_*(M;V_n[t,t^{-1}])$ 
has no free part.
\end{lemma}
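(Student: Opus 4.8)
The plan is to deduce the asserted equivalence directly from the two facts recalled just above the lemma --- the chain isomorphism $C_*(\overline{M};V_n)\cong C_*(M;V_n[t,t^{-1}])$ and the base-change identity
$$
H_*(M;V_n(t))\;\cong\;V_n(t)\underset{\mathbb C[t,t^{-1}]}{\otimes}H_*(M;V_n[t,t^{-1}])
$$
--- together with the elementary module theory of the ring $\mathbb C[t,t^{-1}]$. First I would record that $\mathbb C[t,t^{-1}]$ is a principal ideal domain (it is the localization of the PID $\mathbb C[t]$ at the multiplicative set $\{t^k\}_{k\ge 0}$), that its field of fractions is $\mathbb C(t)$, and that $V_n(t)=\mathbb C(t)\otimes_{\mathbb C}V_n$ is, as a $\mathbb C[t,t^{-1}]$-module, just $\mathbb C(t)\otimes_{\mathbb C[t,t^{-1}]}(V_n\otimes_{\mathbb C}\mathbb C[t,t^{-1}])$; so the displayed identity becomes $H_*(M;V_n(t))\cong\mathbb C(t)\otimes_{\mathbb C[t,t^{-1}]}H_*(M;V_n[t,t^{-1}])$, i.e.\ the localization of the $\mathbb C[t,t^{-1}]$-module $H_*(M;V_n[t,t^{-1}])$ at its field of fractions.

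Next I would check the finiteness needed to apply the structure theorem. Since $M$ is (simple homotopy equivalent to) a finite CW-complex $W$, the complex $C_*(M;V_n[t,t^{-1}])$ is a bounded complex of finitely generated free $\mathbb C[t,t^{-1}]$-modules (one copy of $V_n\otimes_{\mathbb C}\mathbb C[t,t^{-1}]$ for each cell of $W$); as $\mathbb C[t,t^{-1}]$ is Noetherian, each homology module $H_k(M;V_n[t,t^{-1}])$ is finitely generated over $\mathbb C[t,t^{-1}]$. By the structure theorem for finitely generated modules over a PID, for each $k$ there is a decomposition $H_k(M;V_n[t,t^{-1}])\cong\mathbb C[t,t^{-1}]^{r_k}\oplus T_k$ with $T_k$ the torsion submodule, the \emph{free part} being the summand $\mathbb C[t,t^{-1}]^{r_k}$. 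Tensoring over $\mathbb C[t,t^{-1}]$ with $\mathbb C(t)$ is exact and annihilates $T_k$, so it yields $\mathbb C(t)^{r_k}$. Hence $H_k(M;V_n(t))=0$ if and only if $r_k=0$, that is, if and only if $H_k(M;V_n[t,t^{-1}])$ has no free part; ranging over all $k$ and invoking $H_*(\overline{M};V_n)\cong H_*(M;V_n[t,t^{-1}])$ completes the proof.

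I do not expect a genuine obstacle here; the statement is essentially a repackaging of localization at the fraction field of a PID. The only points deserving care are: (i) that the base-change identity above is precisely flat (indeed localization) base change along $\mathbb C[t,t^{-1}]\hookrightarrow\mathbb C(t)$, which is why one argues with finite generation over the Noetherian PID $\mathbb C[t,t^{-1}]$ rather than over the group ring $\mathbb Z[\pi_1(M)]$; and (ii) that for a finitely generated module over a PID, ``has no free part'' is synonymous with ``is a torsion module,'' which is exactly the condition detected by vanishing after $\otimes_{\mathbb C[t,t^{-1}]}\mathbb C(t)$.
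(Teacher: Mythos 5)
Your argument is correct and coincides with the paper's own (largely implicit) proof: the lemma is deduced there exactly from the identification $H_*(M;V_n(t))\cong V_n(t)\otimes_{\mathbb C[t,t^{-1}]}H_*(M;V_n[t,t^{-1}])$, with the vanishing criterion being that localization at the fraction field of the PID $\mathbb C[t,t^{-1}]$ kills precisely the torsion, so acyclicity is equivalent to the absence of a free part. You have merely made explicit the finite generation and structure-theorem details that the paper leaves to the reader.
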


\begin{lemma}\label{lem:acyclic0}
$H_0(M;V_n(t))=0.$
\end{lemma}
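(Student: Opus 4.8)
The plan is to show that $H_0(M;V_n(t))=0$ by a direct computation of the zeroth twisted homology, exploiting the fact that $\alpha$ is surjective and that $t$ acts invertibly on $V_n(t)$.

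First I would recall that for a connected CW-complex, $H_0(M;V_n(t))$ is the group of coinvariants $V_n(t)_{\pi_1(M)}$, i.e. the quotient of $V_n(t)$ by the submodule generated by all elements of the form $w - w\cdot\gamma = w - \alpha(\gamma)\rho_n^{-1}(\gamma)(w)$ for $w\in V_n(t)$ and $\gamma\in\pi_1(M)$. Equivalently, $H_0(M;V_n(t)) = \mathrm{coker}(\partial_1)$ where $\partial_1$ is the first boundary map of $C_*(M;V_n(t))$; using a presentation of $\pi_1(M)$ with one $0$-cell, the image of $\partial_1$ is spanned by $(\alpha\otimes\rho_n)(g_j)(v) - v$ over generators $g_j$ and basis vectors $v$ of $V_n$.

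The key step is to pick a generator $g_0$ of $\pi_1(M)$ with $\alpha(g_0) = t^{\pm 1}$; such an element exists because $\alpha$ is surjective. Then for every $w\in V_n(t)$ the element $w - t^{\pm 1}\rho_n^{-1}(g_0)(w)$ lies in $\mathrm{im}(\partial_1)$. I claim the $\mathbb{F}(t)$-linear endomorphism $\Phi = \mathrm{Id} - t^{\pm 1}\rho_n^{-1}(g_0)$ of the finite-dimensional $\mathbb{F}(t)$-vector space $V_n(t)$ is an isomorphism: its matrix (in a basis of $V_n$ over $\mathbb{F}$, extended to $V_n(t)$) has entries in $\mathbb{F}[t,t^{-1}]$, and its determinant is, up to a unit, $\det(\rho_n^{-1}(g_0))\cdot\prod(1 - t^{\pm 1}\lambda_i)$ where $\lambda_i$ are the eigenvalues of $\rho_n(g_0)^{-1}$; since $\rho_n = \sigma_n\circ\mathrm{Hol}_{(M,\eta)}$ and $\mathrm{Hol}_{(M,\eta)}(g_0)\in\mathrm{SL}(2,\mathbb{C})$ is not $\pm I$ — indeed it is a nontrivial (typically loxodromic) hyperbolic isometry, or at worst parabolic, but never the identity in $\mathrm{PSL}(2,\mathbb{C})$ since $\mathrm{Hol}_M$ is faithful and discrete — the $\lambda_i$ are genuine complex numbers, so the polynomial $\prod(1 - t^{\pm 1}\lambda_i)$ is a nonzero element of $\mathbb{F}[t,t^{-1}]$ and hence a unit in $\mathbb{F}(t)$. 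Therefore $\Phi$ is surjective, which forces $\mathrm{im}(\partial_1) = V_n(t)$ and thus $H_0(M;V_n(t)) = 0$.

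The main obstacle I anticipate is justifying that $\det\Phi\neq 0$ in full generality — in particular handling the case where $\rho_n(g_0)$ is not diagonalizable (a parabolic holonomy element), so that one must argue with generalized eigenvalues, and making sure no eigenvalue $\lambda_i$ equals $t^{\mp 1}$ \emph{as an element of $\mathbb{F}(t)$}, which is automatic since the $\lambda_i$ are constants in $\mathbb{F}\subset\mathbb{F}(t)$. One must also confirm that $\mathrm{Hol}_{(M,\eta)}(g_0)\neq\pm I$: this holds because any surjection $\alpha$ is nontrivial on $\pi_1(M)$, and an element mapping to a generator of $\mathbb{Z}$ has infinite order, hence cannot lie in the finite-order center of a lift of a discrete faithful representation; even in the worst case $\sigma_n$ applied to a parabolic gives a unipotent matrix with all eigenvalues $1$, making $\det\Phi = \prod(1-t^{\pm1}) = (1-t^{\pm1})^n \neq 0$. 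Once $\Phi$ is shown invertible the conclusion is immediate, so the write-up will be short, with the eigenvalue bookkeeping being the only place requiring care.
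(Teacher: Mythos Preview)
Your argument is correct and is essentially the standard computation that the paper defers to by citing Proposition~3.5 of Kirk--Livingston; the paper gives no details beyond that citation, so you are supplying more than the paper does.

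That said, your anticipated obstacle is a non-issue and the eigenvalue bookkeeping can be dropped entirely. For \emph{any} matrix $A\in M_n(\mathbb{C})$ the polynomial $\det(\mathrm{Id}-t^{\pm 1}A)\in\mathbb{C}[t,t^{-1}]$ has constant term $\det(\mathrm{Id})=1$ when evaluated at $t^{\pm 1}=0$, so it is automatically a nonzero element of $\mathbb{C}(t)$ and hence a unit there. Thus $\Phi=\mathrm{Id}-t^{\pm1}\rho_n^{-1}(g_0)$ is invertible regardless of whether $\rho_n(g_0)$ is diagonalizable, parabolic, or even equal to $\pm I$; the only input you actually use is the surjectivity of $\alpha$ (to produce $g_0$ with $\alpha(g_0)=t^{\pm1}$). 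You may therefore delete the discussion of faithfulness of $\mathrm{Hol}_M$, loxodromic versus parabolic elements, and the eigenvalue product, and the proof becomes two lines.
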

\begin{proof}
This lemma is obtained from Proposition 3.5 in \cite{kirk-livingston}.
\end{proof}

According to \cite{milnor3}, the short exact sequence 
\begin{equation}\label{eq:milnor}
0\to C_*(\overline{M};V_n)\xrightarrow{t-1}C_*(\overline{M};V_n)
\xrightarrow{t=1}C_*(M;V_n)\to 0
\end{equation}
induces the next long exact sequence in the twisted homology, 
which is called the Wang exact sequence (\cite{spanier}, p456).
\begin{align}\label{eq:exact1}
0&\to H_2(\overline{M};V_n)\xrightarrow{t-1}H_2(\overline{M};V_n)
      \xrightarrow{t=1}H_2(M;V_n)\\
 &\xrightarrow{\delta}H_1(\overline{M};V_n)\xrightarrow{t-1}
      H_1(\overline{M};V_n)\xrightarrow{t=1}H_1(M;V_n)\to\cdots \notag
\end{align}
Here, $\delta$ is the so-called connecting map.

\begin{lemma}\label{lem:acycliceven}
Suppose $n$ is even.
Then, $H_k(M;V_n(t))=0$ for any $k$.
\end{lemma}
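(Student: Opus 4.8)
The statement to prove is Lemma~\ref{lem:acycliceven}: when $n$ is even, $H_k(M;V_n(t))=0$ for all $k$. By Lemma~\ref{lem:nofreepart}, this is equivalent to showing that $H_*(\overline{M};V_n)\cong H_*(M;V_n[t,t^{-1}])$ has no free part as a $\mathbb C[t,t^{-1}]$-module. I would deduce this directly from the Wang exact sequence~(\ref{eq:exact1}) together with the vanishing results of Menal-Ferrer and Porti (Proposition~\ref{prop:dim}(1)), which say that for even $n$ the untwisted homology $H_i(M;V_n)=0$ for $i=0,1,2$ (via Poincar\'e duality between $H^i$ and $H_{3-i}$, and the fact that $M$ is an open $3$-manifold so $H_i=0$ for $i\ge 3$, while $H_0$ is covered by Lemma~\ref{lem:acyclic0}'s argument at the untwisted level).

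The key steps are as follows. First I would record that $H_i(M;V_n)=0$ for every $i$ when $n$ is even; this is Proposition~\ref{prop:dim}(1) reinterpreted homologically. Then I would feed this into the Wang sequence~(\ref{eq:exact1}): each segment
$$
H_k(\overline{M};V_n)\xrightarrow{t-1}H_k(\overline{M};V_n)\xrightarrow{t=1}H_k(M;V_n)
$$
now has $H_k(M;V_n)=0$, so multiplication by $t-1$ is surjective on $H_k(\overline{M};V_n)$ for all $k$. The point is that $H_k(\overline{M};V_n)\cong H_k(M;V_n[t,t^{-1}])$ is a finitely generated $\mathbb C[t,t^{-1}]$-module (since $M$ has a finite CW-structure and $V_n$ is finite-dimensional, so each chain group $C_k(M;V_n[t,t^{-1}])$ is a finitely generated free $\mathbb C[t,t^{-1}]$-module), and $\mathbb C[t,t^{-1}]$ is a PID. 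A finitely generated module over a PID on which multiplication by the nonzero element $t-1$ is surjective must be a torsion module: its free part $\mathbb C[t,t^{-1}]^r$ would have $(t-1)\cdot\mathbb C[t,t^{-1}]^r\neq\mathbb C[t,t^{-1}]^r$. Hence the free part is zero for every $k$, and Lemma~\ref{lem:nofreepart} gives $H_k(M;V_n(t))=0$.

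I expect the only real subtlety to be the bookkeeping needed to pass from the \emph{cohomological} vanishing in Proposition~\ref{prop:dim}(1) to the \emph{homological} vanishing $H_i(M;V_n)=0$ used in the Wang sequence, and to justify finite generation of $H_k(\overline{M};V_n)$ over $\mathbb C[t,t^{-1}]$. For the first, one uses that $V_n$ carries an invariant bilinear form pairing it with itself (the representation $\sigma_n$ of $\mathrm{SL}(2,\mathbb C)$ is self-dual), so $H^i(M;V_n)\cong H_i(M;V_n)$ up to the usual universal-coefficient and duality identifications already invoked around Proposition~\ref{prop:basis}; since this self-duality of the $V_n$ coefficient system is standard and already implicit in the cited Menal-Ferrer--Porti results, I would state it and move on. For the second, finite generation follows because $M$ is a compact manifold with a finite CW-structure, making $C_*(M;V_n[t,t^{-1}])$ a bounded complex of finitely generated free $\mathbb C[t,t^{-1}]$-modules; its homology is then automatically finitely generated. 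With these two points in hand the argument is short, and the conclusion follows for \emph{all} $k$ simultaneously, including $k=0$ (already handled by Lemma~\ref{lem:acyclic0}) and the degrees above the dimension of $M$ where everything is trivially zero.
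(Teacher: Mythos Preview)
Your proposal is correct and follows essentially the same route as the paper: both feed the vanishing of $H_*(M;V_n)$ (from Proposition~\ref{prop:dim}(1)) into the Wang exact sequence~(\ref{eq:exact1}) to conclude that $t-1$ acts surjectively (the paper says ``isomorphically'') on $H_k(\overline{M};V_n)$, and then invoke Lemma~\ref{lem:nofreepart}. Your write-up is in fact more careful than the paper's in spelling out why surjectivity of $t-1$ on a finitely generated $\mathbb C[t,t^{-1}]$-module forces the free part to vanish, and in flagging the cohomology-to-homology passage.
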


\begin{proof}
By Proposition \ref{prop:dim}, 
$H_k(M;V_n)=0$ for $k=0,1,2$. 
From the exact sequence (\ref{eq:exact1}), 
we obtain isomorphisms for $k=0,1,2$: 
$$
0\to H_k(\overline{M};V_n)\xrightarrow[\cong]{t-1} H_k(\overline{M};V_n)\to 0.
$$
This implies that $H_k(\overline{M};V_n)$ has no free part, 
whereupon we have this lemma by Lemma \ref{lem:nofreepart}.
\end{proof}

From now on, we consider the $n$:odd case. 
Applying the short exact sequence (\ref{eq:milnor})
to the boundaries of $M$ and $\overline{M}$, 
namely, $\{T_{\ell}\}_{\ell=1}^{\ell=b}$ 
and 
$\{A_{\ell}\}_{\ell=1}^{\ell=b}$, 
we have
\begin{equation}\label{eq:exact2}
0\to C_*(A_{\ell};V_n)\xrightarrow{t-1}C_*(A_{\ell};V_n)
      \xrightarrow{t=1}C_*(T_{\ell};V_n)\to 0.
\end{equation}
Furthermore,
we have the following commutative diagram 
since all constructions are natural and 
$\oplus_{\ell=1}^bH_2(T_{\ell};V_n)\cong H_2(M;V_n)$ 
by Propositions \ref{prop:dim} and \ref{prop:basis}: 

\begin{equation}\label{eq:commutative1}
\begin{CD}
 0@>>> \displaystyle{\oplus_{\ell=1}^b}H_2(T_{\ell};V_n) 
 @>{\delta_{\partial}}>> \oplus_{\ell=1}^bH_1(A_{\ell};V_n)
 @>{t-1}>> \oplus_{\ell=1}^bH_1(A_{\ell};V_n) 
 @>{t=1}>>\cdots \\
 @. @VV{\cong}V @VV{\iota_*}V @VVV \\
 \cdots@>{t=1}>> H_2(M;V_n) @>{\delta}>> H_1(\overline{M};V_n) @>{t-1}>>
 H_{1}(\overline{M};V_n) @>{t=1}>>\cdots
 \end{CD}
\end{equation}
Here, $\delta_{\partial}$ means the connecting map corresponding to $\delta$ 
and $\iota$ is the inclusion of $A_{\ell}$ into $\overline{M}$.

By Proposition \ref{prop:basis}, 
we may suppose that $H_2(M;V_n)$ is generated by $[v_{\ell}\otimes T_{\ell}]$, 
where $v_{\ell}$ is an invariant vector of $\rho^{-1}_n(\pi_1(T_{\ell}))$. 
Let us consider the mapping $\delta_{\partial}$ in (\ref{eq:commutative1}). 

\begin{lemma}\label{lem:connecting}
$\delta_{\partial}([v_{\ell}\otimes T_{\ell}])
=[(1+t+\cdots+t^{a(\ell)-1})\otimes v_{\ell}\otimes \lambda_{\ell}]$
where $\lambda_{\ell}$ is a longitude of $T_{\ell}$, 
for all $\ell$.
\end{lemma}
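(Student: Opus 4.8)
The plan is to localize the statement to the single boundary torus $T_\ell$ and then read off $\delta_\partial$ from an explicit Fox-calculus computation. Using the identification $C_*(A_\ell;V_n)\cong C_*(T_\ell;V_n[t,t^{-1}])$ recalled above (cf.\ Theorem 2.1 in \cite{kirk-livingston}), the Wang connecting homomorphism attached to (\ref{eq:exact2}) is the standard one: one represents a class in $H_2(T_\ell;V_n)$ by a $2$-cycle $z$, picks any $\mathbb C$-linear lift $\tilde z\in C_2(T_\ell;V_n[t,t^{-1}])$, applies the boundary operator, notes that $d_2\tilde z$ lies in the image of multiplication by $t-1$ (because $z$ is a cycle at $t=1$), writes $d_2\tilde z=(t-1)w$, and sets $\delta_\partial([z])=[w]$; here $w$ is automatically a $1$-cycle because $t-1$ is a non-zero-divisor in $\mathbb C[t,t^{-1}]$.

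First I would fix the standard CW structure on $T_\ell$ with one $0$-cell, two oriented $1$-cells carrying the loops $\lambda_\ell$ and $\mu_\ell$, and one $2$-cell $f$ glued along the relator $r=\lambda_\ell\mu_\ell\lambda_\ell^{-1}\mu_\ell^{-1}$, and lift these cells to the infinite cyclic cover to obtain $\mathbb C[t,t^{-1}]$-bases of $C_*(T_\ell;V_n[t,t^{-1}])$. By Proposition \ref{prop:dim} the boundary contributes a one-dimensional space in degree $2$ per torus, so $H_2(T_\ell;V_n)=\ker d_2$ is a line inside $C_2(T_\ell;V_n)\cong V_n$; since $\rho_n$ fixes $v_\ell$, a short check shows that $v_\ell\otimes\tilde f$ lies in this kernel and hence represents the generator $[v_\ell\otimes T_\ell]$ of Proposition \ref{prop:basis}, for a suitable orientation of $T_\ell$.

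The heart of the matter is then the computation of $d_2(v_\ell\otimes\tilde f)$ over $\mathbb C[t,t^{-1}]$. One has $\partial r/\partial\lambda_\ell=1-\lambda_\ell\mu_\ell\lambda_\ell^{-1}$ and $\partial r/\partial\mu_\ell=\lambda_\ell-\lambda_\ell\mu_\ell\lambda_\ell^{-1}\mu_\ell^{-1}$. Applying the matrix representation induced by $\alpha\otimes\rho_n$ and using that $\rho_n(\pi_1(T_\ell))$ fixes $v_\ell$ together with $\alpha(\lambda_\ell)=1$ and $\alpha(\mu_\ell)=t^{a(\ell)}$ (see (\ref{eq:dubois-yamaguchi})), the coefficient of the $\mu_\ell$-cell annihilates $v_\ell$ (its two $t$-exponents cancel) while the coefficient of the $\lambda_\ell$-cell sends $v_\ell$ to $(1-t^{a(\ell)})v_\ell$. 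Hence $d_2(v_\ell\otimes\tilde f)=(1-t^{a(\ell)})\,v_\ell\otimes\tilde e_{\lambda_\ell}$; factoring $1-t^{a(\ell)}=-(t-1)(1+t+\cdots+t^{a(\ell)-1})$ and dividing by $t-1$ yields $\delta_\partial([v_\ell\otimes T_\ell])=[(1+t+\cdots+t^{a(\ell)-1})\otimes v_\ell\otimes\lambda_\ell]$, up to the overall sign and the $\pm t^{p}$-indeterminacy coming from the choice of lift $\tilde f$, both of which are immaterial here.

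I expect the only real difficulty to be bookkeeping rather than mathematics: fixing the precise conventions for the twisted boundary operator (Fox derivatives versus their conjugates, the right-module action through $\rho_n^{-1}$, possible transposes) and for the sign and direction of the Wang connecting map, and choosing compatible orientations for $[T_\ell]$, $\lambda_\ell$ and $\mu_\ell$. None of this affects the outcome in an essential way, since signs are discarded throughout and, crucially, the $\mu_\ell$-term drops out for the invariant vector $v_\ell$ no matter which convention is adopted.
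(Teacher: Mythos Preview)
Your argument is correct and follows essentially the same route as the paper: both carry out the standard snake-lemma chase for the Wang connecting map by lifting $v_\ell\otimes T_\ell$ to the $2$-chain $1\otimes v_\ell\otimes\tilde f$ in $C_*(T_\ell;V_n[t,t^{-1}])\cong C_*(A_\ell;V_n)$, computing its boundary, and dividing by $t-1$. The only cosmetic difference is that you phrase the boundary computation via Fox derivatives of the relator $\lambda_\ell\mu_\ell\lambda_\ell^{-1}\mu_\ell^{-1}$ (and explicitly check that the $\mu_\ell$-term vanishes on $v_\ell$), whereas the paper writes the boundary directly as $d_2(A_\ell)=\mu_\ell\cdot\lambda_\ell-\lambda_\ell$; the resulting sign discrepancy is, as you note, harmless in this paper's conventions.
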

\begin{proof}
For ease of notation, 
denote $C_*(A_{\ell};V_n)$ by $C_*$ and $C'_*$, 
and denote $C_*(T_{\ell};V_n)$ by $C''_*$ 
in the exact sequence (\ref{eq:exact2}). 
Thus, we have a commutative diagram:
\begin{equation}\notag
\begin{CD}
0 @>>> C'_2 @>{t-1}>> C_2 
 @>{j_2}>> C''_2 @>>> 0\\
@. @VV{d'_2}V @VV{d_2}V @VV{d_2''}V \\
0 @>>> C'_1 @>{t-1}>> C_1 @>{j_1}>>
 C''_1 @>>> 0 \\
\end{CD}
\end{equation}
Let $z''=v_{\ell}\otimes T_{\ell}\in C''_2$. 
For $z''$, there is an element in $C_2$, say $c_2$, 
such that $j_2(c_2)=z''$ 
since $j_2$ is surjective. 
By the construction, we may suppose
$c_2=1\otimes v_{\ell}\otimes A_{\ell}$. 
Since $[z'']$ is a generator of $H_2(T_{\ell};V_n)$, 
$z''\in \ker d''_2$, then
$j_1\circ d_2(c_2)=d''_2\circ j_2(c_2)=d''_2(z'')=0$. 
Thus, $d_2(c_2)\in \ker j_1=\text{im} (t-1)$, so 
there exists an element $z'\in C'_1$ such that $(t-1)z'=d_2(c_2)$.
Since $d_2(A_{\ell})=\mu_{\ell}\cdot\lambda_{\ell}-\lambda_{\ell}$ 
by the notation (\ref{eq:dubois-yamaguchi}) 
and $v_{\ell}$ is fixed by $\rho_n^{-1}(\mu_{\ell})$, 
we have 
\begin{align}\label{eq:boundaryoperator1}
d_2(c_2)&=d_2(1\otimes v_{\ell}\otimes A_{\ell})
\overset{(\ref{eq:bdd2})}{=}1\otimes v_{\ell}\otimes d_2(A_{\ell})
\overset{(\ref{eq:dubois-yamaguchi})}{=}
1\otimes v_{\ell}\otimes(\mu_{\ell}\cdot\lambda_{\ell}-\lambda_{\ell})\\ \notag
&=
1\otimes v_{\ell}\otimes \mu_{\ell}\cdot\lambda_{\ell}-1\otimes v_{\ell}\otimes \lambda_{\ell}
\overset{(\ref{eq:module2})}{=}
\alpha(\mu_{\ell})\otimes \rho^{-1}_n(\mu_{\ell})(v_{\ell})\otimes\lambda_{\ell}-1\otimes v_{\ell}\otimes \lambda_{\ell}\\
&\overset{(\ref{eq:dubois-yamaguchi})}{=}
t^{a(\ell)}\otimes v_{\ell}\otimes\lambda_{\ell}-1\otimes v_{\ell}\otimes \lambda_{\ell}
=
(t^{a(\ell)}-1)(1\otimes v_{\ell}\otimes\lambda_{\ell}). \notag
\end{align}
Hence, $z'=(1+t+\cdots+t^{a(\ell)-1})\otimes v_{\ell}\otimes\lambda_{\ell}$.
Thus, we have this lemma.
(We omit the well-definedness.) 
\end{proof}

Since $M$ is a 3-manifold with tori boundary, 
the Euler number $\chi(M)=0$. 
Then, the universal coefficient theorem and the Euler-Poincar\'e theorem 
imply that 
$\sum_{d=0}^2(-1)^d\text{rk} H_d(\overline{M};V_n)=\chi(M)\times n$. 
Thus, we have 
$\text{rk}H_1(\overline{M};V_n)=\text{rk}H_2(\overline{M};V_n)$
since $\text{rk}H_0(\overline{M};V_n)=0$ 
(Lemmas \ref{lem:nofreepart} and \ref{lem:acyclic0}).

The next lemma with Lemmas \ref{lem:acyclic0} and \ref{lem:acycliceven}
concludes Proposition \ref{prop:homology1}. 
\begin{lemma}\label{lem:acyclicodd}
Suppose $n$ is odd. Then, 
$H_k(M;V_n(t))=0$ for $k=1,2$.
\end{lemma}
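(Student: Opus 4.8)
The plan is to finish the proof of Proposition \ref{prop:homology1} in the remaining odd case by extracting, from the rank equality $\operatorname{rk}H_1(\overline M;V_n)=\operatorname{rk}H_2(\overline M;V_n)$, the statement that neither group has a free part, and then invoking Lemma \ref{lem:nofreepart}. The engine for this is the commutative diagram (\ref{eq:commutative1}) together with Lemma \ref{lem:connecting}. First I would observe that by Propositions \ref{prop:dim} and \ref{prop:basis} the top row of (\ref{eq:commutative1}) has $\oplus_\ell H_2(T_\ell;V_n)$ and $\oplus_\ell H_1(A_\ell;V_n)$ of complex dimension $b$ and $2b$ respectively, with $H_1(A_\ell;V_n)\cong H_1(S^1;V_n)$, and that $\iota_*$ on $H_1$ identifies $\oplus_\ell H_1(A_\ell;V_n)$ with a submodule of $H_1(\overline M;V_n)$ (as $\mathbb C[t,t^{-1}]$-modules). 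The key computational input is Lemma \ref{lem:connecting}: the image of the generator $[v_\ell\otimes T_\ell]$ under $\delta_\partial$ is $[(1+t+\cdots+t^{a(\ell)-1})\otimes v_\ell\otimes\lambda_\ell]$, whose coefficient $(1+t+\cdots+t^{a(\ell)-1})$ is a nonzero element of $\mathbb C[t,t^{-1}]$.

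Next I would chase (\ref{eq:commutative1}). Because the left vertical map is an isomorphism and $\iota_*$ is (at least) injective onto its image, the nonvanishing of $\delta_\partial$ on each generator forces $\delta$ to be nonzero on each generator $[v_\ell\otimes T_\ell]$ of $H_2(M;V_n)$; more precisely, $\delta([v_\ell\otimes T_\ell])$ is annihilated by no nonzero polynomial other than multiples of $(t^{a(\ell)}-1)$ — wait, rather, the point is that $\delta([v_\ell\otimes T_\ell])$ is a torsion element of $H_1(\overline M;V_n)$ precisely when $(1+t+\cdots+t^{a(\ell)-1})\otimes v_\ell\otimes\lambda_\ell$ is torsion in $H_1(A_\ell;V_n)$, which it is since $H_1(S^1;V_n)$ with the nontrivial $t$-action, restricted to the $v_\ell$ summand, is a cyclic torsion module (annihilated by $t^{a(\ell)}-1$, up to the invariant-vector splitting). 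So the image of $\delta$ consists of torsion elements. Feeding this into the Wang sequence (\ref{eq:exact1}): the free part of $H_2(M;V_n)$ — but $H_2(M;V_n)$ is finite-dimensional over $\mathbb C$, so the relevant statement is about $H_*(\overline M;V_n)$ as $\mathbb C[t,t^{-1}]$-modules. From (\ref{eq:exact1}), $H_2(M;V_n)=\operatorname{coker}(t-1:H_2(\overline M;V_n)\to H_2(\overline M;V_n))\oplus$ something, and $\ker(t-1\ \text{on}\ H_1(\overline M;V_n))$ injects into $H_2(M;V_n)$ via the relation with $\delta$; I would use that the free part of a $\mathbb C[t,t^{-1}]$-module $N$ contributes equally to $\dim_{\mathbb C}\ker(t-1\ \text{on}\ N)$ being zero (no, a free module has $t-1$ acting injectively on the localization but the $\mathbb C$-dimension is infinite) — the cleanest route is: $H_k(\overline M;V_n)$ is finitely generated over $\mathbb C[t,t^{-1}]$, its free rank equals $\dim_{\mathbb C(t)}H_k(M;V_n(t))$, and by the rank equality already established plus $\operatorname{rk}H_0=0$, we have $\operatorname{rk}H_1=\operatorname{rk}H_2=:r$, so it suffices to show $r=0$, equivalently $\dim_{\mathbb C(t)}H_2(M;V_n(t))=0$.

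To show $\dim_{\mathbb C(t)}H_2(M;V_n(t))=0$, I would tensor the Wang sequence with $\mathbb C(t)$ over $\mathbb C[t,t^{-1}]$, killing all torsion: the sequence $0\to H_2(M;V_n(t))\xrightarrow{\,0\,}\cdots$ — more usefully, $H_k(M;V_n(t))$ fits in $0\to \mathbb C(t)\otimes\operatorname{coker}(t-1|H_k(\overline M;V_n))\to H_k(M;V_n)\otimes?$; the honest statement is that $H_2(M;V_n(t))\cong \mathbb C(t)\otimes_{\mathbb C[t,t^{-1}]}(\text{free part of }H_2(\overline M;V_n))$ and this maps via $\delta$ into $\mathbb C(t)\otimes(\text{free part of }H_1(\overline M;V_n))$, which by the commutative square factors through $\mathbb C(t)\otimes\operatorname{im}(\iota_*)$; but $\oplus_\ell H_1(A_\ell;V_n)$ is entirely torsion over $\mathbb C[t,t^{-1}]$ (each $A_\ell\simeq S^1$ with nontrivial $\alpha$, so $H_1(A_\ell;V_n)$ is annihilated by $t^{a(\ell)}-1$ on the cyclic part and the invariant-vector part is handled by $n$ odd giving a $b$-dimensional invariant space — here I must use Proposition \ref{prop:basis} which says $\dim H_1(T_\ell;V_n)=2b$... no, $H_1$ of a single annulus), hence $\mathbb C(t)\otimes\operatorname{im}(\iota_*)=0$, forcing $\delta=0$ after tensoring, hence by exactness $t-1$ is an isomorphism on $\mathbb C(t)\otimes H_1(\overline M;V_n)$, which for a finitely generated module over the PID $\mathbb C[t,t^{-1}]$ (localized) means the free part of $H_1(\overline M;V_n)$ — no, $t-1$ is always invertible after inverting it; the real conclusion comes from looking one step further left in the Wang sequence and using $H_0=0$. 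I would organize this as: after tensoring with $\mathbb C(t)$, the Wang sequence gives $0\to H_2(M;V_n(t))\to H_2(M;V_n)\otimes_{\mathbb C[t,t^{-1}]}\mathbb C(t)$ — but $H_2(M;V_n)$ is a finite-dimensional $\mathbb C$-vector space with $t$-action, hence torsion, hence $\otimes\mathbb C(t)=0$, giving $H_2(M;V_n(t))=0$ directly, and symmetrically $H_1(M;V_n(t))=0$, while $H_0$ and $H_3$ vanish by Lemma \ref{lem:acyclic0} and dimension reasons. The hard part is being careful that the Wang sequence is genuinely a sequence of $\mathbb C[t,t^{-1}]$-modules so that $H_k(M;V_n)$ appears as a torsion module; this is exactly the content of the isomorphism $C_*(\overline M;V_n)\cong C_*(M;V_n[t,t^{-1}])$ cited before Lemma \ref{lem:nofreepart}, and the main subtlety is that $H_2(M;V_n)\neq 0$ in the odd case, so one cannot simply quote acyclicity — one must use that $\delta$ lands in the torsion submodule, which is precisely what Lemma \ref{lem:connecting} together with the commutativity of (\ref{eq:commutative1}) delivers.
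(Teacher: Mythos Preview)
Your overall strategy is right --- reduce to showing $\operatorname{rk}H_1(\overline M;V_n)=\operatorname{rk}H_2(\overline M;V_n)=0$ and invoke Lemma~\ref{lem:nofreepart}, using Lemma~\ref{lem:connecting} and diagram~(\ref{eq:commutative1}) --- but neither of the two arguments you actually run reaches the conclusion.

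First, the observation that ``$\delta$ lands in the torsion submodule of $H_1(\overline M;V_n)$'' is automatic and therefore empty: by exactness $\operatorname{im}\delta=\ker\bigl(t-1\ \text{on}\ H_1(\overline M;V_n)\bigr)$, and on any finitely generated module over the PID $\mathbb C[t,t^{-1}]$ the kernel of $t-1$ is contained in the torsion part (multiplication by $t-1$ is injective on any free summand). So this step does not use Lemma~\ref{lem:connecting} at all and cannot by itself constrain the free rank. Second, tensoring the Wang sequence (\ref{eq:exact1}) with $\mathbb C(t)$ is circular: since $H_k(M;V_n)\otimes_{\mathbb C[t,t^{-1}]}\mathbb C(t)=0$, the sequence collapses to isomorphisms $H_k(M;V_n(t))\xrightarrow{\,t-1\,}H_k(M;V_n(t))$, which is just the tautology that $t-1$ is a unit in $\mathbb C(t)$; it says nothing about whether $H_k(M;V_n(t))$ vanishes. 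There is no injection $H_2(M;V_n(t))\hookrightarrow H_2(M;V_n)\otimes\mathbb C(t)$ coming from this sequence.

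What the paper actually extracts from Lemma~\ref{lem:connecting} and~(\ref{eq:commutative1}) is stronger: $\delta$ is \emph{injective}. For $\xi=\sum_\ell\beta_\ell[v_\ell\otimes T_\ell]\neq 0$ one has $\delta(\xi)=\iota_*\delta_\partial(\xi)$, and applying the map $(t=1)\colon H_1(\overline M;V_n)\to H_1(M;V_n)$ (one step further along the bottom Wang row) sends this to $\sum_\ell\beta_\ell\,a(\ell)\,i_{\ell*}[v_\ell\otimes\lambda_\ell]$, which is nonzero because the classes $i_{\ell*}[v_\ell\otimes\lambda_\ell]$ are a basis of $H_1(M;V_n)$ by Proposition~\ref{prop:basis}. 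Hence $\delta(\xi)\neq 0$. Injectivity of $\delta$ forces $\operatorname{coker}\bigl(t-1\ \text{on}\ H_2(\overline M;V_n)\bigr)=0$, and for $N\cong\mathbb C[t,t^{-1}]^r\oplus T$ one has $\dim_{\mathbb C}N/(t-1)N\ge r$, so $r=0$. This is the missing step: you need injectivity of $\delta$, not merely that its image is torsion, and Proposition~\ref{prop:basis} (linear independence of the $[v_\ell\otimes\lambda_\ell]$ in $H_1(M;V_n)$) is the ingredient that delivers it.
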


\begin{proof}
Suppose that $\text{rk}H_1(\overline{M};V_n)=\text{rk}H_2(\overline{M};V_n)=r (>0)$.
From the exact sequence (\ref{eq:exact1}), 
we obtain:
$\text{im}(t=1)\cong H_2(\overline{M};V_n)/\ker(t=1)
				\cong H_2(\overline{M};V_n)/\text{im}(t-1)
				\cong H_2(\overline{M};V_n)/(t-1)H_2(\overline{M};V_n)
				\ncong 0$. 
Therefore, $\ker\delta\cong\text{im}(t=1)\ncong 0$.
Let $\xi$ be nonzero homology class in $H_2(M;V_n)$ 
such that $\delta(\xi)=0\in H_1(\overline{M};V_n)$. 
Then, we can write that 
$\xi =\sum_{\ell=1}^b\beta_{\ell}[v_{\ell}\otimes T_{\ell}]$ 
by Proposition \ref{prop:basis}, 
and we may regard it as an element in $H_2(T_{\ell};V_n)$. 
By Lemma \ref{lem:connecting}, we have 
$\delta_{\partial}(\xi)
=\delta_{\partial}(\sum_{\ell=1}^b\beta_{\ell}[v_{\ell}\otimes T_{\ell}])
=\sum_{\ell=1}^b\beta_{\ell}[(1+t+\cdots+t^{a(\ell)-1})\otimes v_{\ell}\otimes \lambda_{\ell}]$. 
On the other hand, 
the image of $[(1+t+\cdots+t^{a(\ell)-1})\otimes v_{\ell}\otimes\lambda_{\ell}]$ by the map $(t=1)$ 
is nonzero in $H_1(T_{\ell};V_n)$ and $H_{1}(M;V_n)$ 
by Propositions \ref{prop:dim} and \ref{prop:basis}.
Therefore, $[(1+t+\cdots+t^{a(\ell)-1})\otimes v_{\ell}\otimes\lambda_{\ell}]\neq 0$ 
in $H_1(A_{\ell};V_n)$ 
and $\iota_{*}[(1+t+\cdots+t^{a(\ell)-1})\otimes v_{\ell}\otimes\lambda_{\ell}]\neq 0$
in $H_1(\overline{M};V_n)$ 
from the commutative diagram (\ref{eq:commutative1}).
This contradicts $\delta(\xi)=0$, 
and we have $\text{rk}H_1(\overline{M};V_n)=\text{rk}H_2(\overline{M};V_n)=0$.
Thus, we have this lemma by Lemma \ref{lem:nofreepart}.
\end{proof}

\section{Relationship between twisted Alexander invariants 
and higher-dimensional Reidemeister torsion invariants}\label{sec:yamaguchi}

According to Definition \ref{def:torsions}, 
we define the Reidemeister torsion twisted by 
$\rho_n$ stated in Section \ref{sec:holonomy} 
with preferred bases $\bold h=\{\bold h^1,\bold h^2\}$ 
obtained in Proposition \ref{prop:basis}.
Let $M$ be a complete, oriented, hyperbolic 3-manifold 
whose boundary consists of tori $T_1\cup\cdots\cup T_b$.
One has the meridian $\mu_{\ell}$ and a longitude $\lambda_{\ell}$ 
on $T_{\ell}$ for each boundary torus $T_{\ell}$ 
as in just before (\ref{eq:dubois-yamaguchi}). 
Choose the longitude $\lambda_{\ell}$ as a non-trivial cycle 
in $H_1(T_{\ell};\mathbb Z)$ in Proposition \ref{prop:basis};
namely, 
let
$\bold h^1_{\bf{\lambda}} 
=\{[v_1\otimes\lambda_1],\ldots,[v_b\otimes\lambda_b]\}$ 
and denote $\bold h_{\bold\lambda}=\{\bold h^1_{\bold{\lambda}},\bold h^2\}$.
Under this condition, we prove the following theorem 
in this section.

\begin{theorem}\label{thm:limit}
For a positive integer $k$, one has
\begin{enumerate}
\item
$\displaystyle{
{\rm Tor}(C_*(M;V_{2k}))=\Delta_{M,\rho_{2k}}(1)};$
\item
$\displaystyle{{\rm Tor}(C_*(M;V_{2k+1}),\bold h_{\lambda})
=\lim_{t\to 1}\frac{\Delta_{M,\rho_{2k+1}}(t)}{\prod_{\ell=1}^{b}(t^{a(\ell)}-1)}.}$
\end{enumerate}
\end{theorem}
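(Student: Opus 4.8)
The plan is to compare the acyclic twisted chain complex $C_*(M;V_n(t))$, whose torsion defines $\Delta_{M,\rho_n}(t)$, with the chain complex $C_*(M;V_n)$ obtained by formally setting $t=1$, and to keep careful track of the correction terms coming from the homology of the latter. First I would set up, following \cite{kirk-livingston} and Section 3.2.1 of \cite{dubois-yamaguchi1}, the "evaluation at $t=1$" chain map and observe that by Proposition \ref{prop:homology1} the complex $C_*(M;V_n(t))$ is acyclic, so $\Delta_{M,\rho_n}(t)=\operatorname{Tor}(C_*(M;V_n(t)),1\otimes\bold c,\emptyset)$ is a genuine rational function. Since the torsion is a ratio of determinants of submatrices of the boundary maps, I would argue that $\Delta_{M,\rho_n}(t)$ is regular and nonzero (as an element of $\mathbb F(t)/\{\pm t^p\}$) at $t=1$ exactly up to the obstruction measured by $H_*(M;V_n)$; in the even case this obstruction is trivial, and in the odd case it is concentrated in degrees $1,2$ with the explicit bases of Proposition \ref{prop:basis}.

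For (1), since $n=2k$ is even, Proposition \ref{prop:dim} gives $H_i(M;V_{2k})=0$ for $i=0,1,2$, so $C_*(M;V_{2k})$ is itself acyclic with empty homology basis. The specialization argument then shows that $\Delta_{M,\rho_{2k}}(t)$ has neither a zero nor a pole at $t=1$, and that evaluating the transformation matrices defining the torsion at $t=1$ commutes with taking the determinant ratio; this yields $\Delta_{M,\rho_{2k}}(1)=\operatorname{Tor}(C_*(M;V_{2k}),1\otimes\bold c,\emptyset)=\operatorname{Tor}(C_*(M;V_{2k}))$. The main routine point here is the standard multiplicativity of torsion together with the fact that the $(t-1)$-factors in the Wang sequence (\ref{eq:exact1}) are isomorphisms, so no boundary term survives.

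For (2), with $n=2k+1$ odd, the same specialization now picks up exactly the homology $H_1(M;V_{2k+1})$ and $H_2(M;V_{2k+1})$ with the bases $\bold h^1_{\lambda},\bold h^2$. The key computational input is Lemma \ref{lem:connecting}: the connecting map $\delta_\partial$ sends $[v_\ell\otimes T_\ell]$ to $[(1+t+\cdots+t^{a(\ell)-1})\otimes v_\ell\otimes\lambda_\ell]$, and at $t=1$ this multiplies the basis vector $[v_\ell\otimes\lambda_\ell]$ of $H_1$ by $a(\ell)$, while the factor $(t^{a(\ell)}-1)$ is what creates a pole/zero of $\Delta_{M,\rho_{2k+1}}(t)$ at $t=1$ of the appropriate order. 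Comparing the torsion of $C_*(M;V_{2k+1}(t))$ (which has no homology) with that of $C_*(M;V_{2k+1})$ (which has homology $\bold h_\lambda$) via the Milnor exact sequence (\ref{eq:milnor})–(\ref{eq:exact1}) and the multiplicativity of torsion in short exact sequences of complexes, each boundary torus contributes a factor $(t^{a(\ell)}-1)$ in the denominator, and in the limit $t\to 1$ the remaining finite quantity is precisely $\operatorname{Tor}(C_*(M;V_{2k+1}),\bold h_\lambda)$.

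The hard part will be bookkeeping rather than a single deep obstacle: I must verify that the homology bases $\bold h_\lambda$ used on the $t=1$ side match, under the isomorphisms $C_*(\overline M;V_n)\cong C_*(M;V_n[t,t^{-1}])\subset C_*(M;V_n(t))$, the way the torsion of the acyclic complex degenerates at $t=1$, and that the connecting-map computation of Lemma \ref{lem:connecting} is compatible with the chosen lifts of cells and with the sign/$t^p$-indeterminacy. Concretely, I would invoke the multiplicativity formula for Reidemeister torsion applied to the short exact sequence of chain complexes (\ref{eq:milnor}), treating $t-1$ as the first map; the torsion of the acyclic middle-and-left pair contributes $\Delta_{M,\rho_n}(t)$ up to units, the torsion of the associated long exact homology sequence (\ref{eq:exact1}) contributes the determinant of $\delta_\partial$ evaluated via Lemma \ref{lem:connecting}, i.e.\ $\prod_\ell(t^{a(\ell)}-1)$ up to a nonzero constant, and the torsion of $C_*(M;V_n)$ with basis $\bold h_\lambda$ is the remaining factor; solving for $\operatorname{Tor}(C_*(M;V_{2k+1}),\bold h_\lambda)$ and passing to the limit gives the claimed identity.
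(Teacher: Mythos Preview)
Your treatment of part (1) is fine and matches the paper: both complexes are acyclic, so the evaluation map at $t=1$ carries the torsion over directly.

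For part (2), the ingredients you list are the right ones, but the short exact sequence you plan to feed into the multiplicativity lemma is the wrong one. The Milnor sequence (\ref{eq:milnor})
\[
0\to C_*(\overline{M};V_n)\xrightarrow{t-1}C_*(\overline{M};V_n)\xrightarrow{t=1}C_*(M;V_n)\to 0
\]
is not a short exact sequence of finite-dimensional vector spaces over a single field: over $\mathbb C$ the first two terms are infinite-dimensional, and over $\mathbb C[t,t^{-1}]$ (or $\mathbb C(t)$) the third term does not live there. Lemma \ref{lem:basechange2} as stated simply does not apply, so the sentence ``the torsion of the acyclic middle-and-left pair contributes $\Delta_{M,\rho_n}(t)$'' has no precise meaning, and the computation of ``the torsion of the long exact homology sequence'' as $\prod_\ell(t^{a(\ell)}-1)$ is not justified (note also that Lemma \ref{lem:connecting} produces $1+t+\cdots+t^{a(\ell)-1}$, not $t^{a(\ell)}-1$; the extra $(t-1)$ is hiding in the first map, and disentangling this inside an infinite-rank complex is exactly the problem).

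The paper repairs this by staying entirely inside the finite-dimensional $\mathbb C(t)$-world. One builds an explicit two-term subcomplex $C'_*(t)\subset C_*(t)=C_*(M;V_{2k+1}(t))$ spanned by $\{1\otimes v_\ell\otimes T_\ell\}$ in degree $2$ and $\{1\otimes v_\ell\otimes\lambda_\ell\}$ in degree $1$; the boundary computation (\ref{eq:boundaryoperator1}) gives $d'_2(1\otimes v_\ell\otimes T_\ell)=(t^{a(\ell)}-1)(1\otimes v_\ell\otimes\lambda_\ell)$, so $\operatorname{Tor}(C'_*(t))=\prod_\ell(t^{a(\ell)}-1)$ on the nose. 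Now the sequence
\[
0\to C'_*(t)\to C_*(t)\to C''_*(t)\to 0
\]
\emph{is} a short exact sequence of finite-dimensional $\mathbb C(t)$-complexes, all three acyclic, and Lemma \ref{lem:basechange2} together with the base-change Lemma \ref{lem:basechange1} yields
\[
\Delta_{M,\rho_{2k+1}}(t)=\prod_{\ell=1}^{b}(t^{a(\ell)}-1)\cdot\operatorname{Tor}(C''_*(t),\bold c''(t),\emptyset)\cdot(\text{base-change factor}).
\]
Letting $t\to 1$, the subcomplex $C'_*$ becomes the zero-differential complex carrying $\bold h_\lambda$, and the remaining finite factor is identified with $\operatorname{Tor}(C_*(M;V_{2k+1}),\bold h_\lambda)$. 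So the place where your plan needs to change is: replace the Milnor/Wang sequence by this internal subcomplex decomposition over $\mathbb C(t)$; the rest of your bookkeeping (matching $\bold c'^i$ with lifts $\tilde{\bold h}^i$, taking limits $t\to 1$ of bases) is exactly what remains to be done.
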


This theorem corresponds to a result of Milnor \cite{milnor1} 
in the case in which the representation is trivial, 
Theorem A in \cite{kitano} in the case of $\rho_2$ 
(i.e., $2k=2$ case)
and Theorem 5 in \cite{dubois-yamaguchi1} 
in the case of the adjoint representation of $\rho_2$. 
(This case corresponds to that of $2k+1=3$. See Section \ref{sec:holonomy}.)
Actually, (1) is proved from the map at the chain level 
$C_*(M;V_{2k}(t))\to C_*(M;V_{2k})$ 
induced by evaluation $t=1$ 
since the corresponding homologies 
$H_*(M;V_{2k})$ and $H_*(M;V_{2k}(t))$ vanish 
by Proposition \ref{prop:dim} and Lemma \ref{lem:acycliceven}.
(See also Remark 2.11 in \cite{porti2}.)

From now on, we consider the odd dimensional case: 
$n=2k+1 (k=,1,2,\cdots)$.
The idea of the following arguments is the same as 
that in Section 6 in \cite{dubois-yamaguchi1}.
To prove the theorem, we review two lemmas, 
namely,
the {\it base-changing lemma} 
and the {\it multiplicativity lemma}.
Note that the statements of both 
hold without the assumption on $\bold h^1$.

Let $C_*$ be a chain complex 
and $H_i$ its $i$th homology group $H_i(C_*)$. 
The statement of the base-changing lemma is 
as follows. See Proposition 0.2 in \cite{porti1},
for example. 

\begin{lemma}\label{lem:basechange1}
Suppose $\bold c=\{\bold c^i\}$ and 
$\bold c'=\{\bold c'^i\}$ are bases of $C_*$, and 
$\bold h=\{\bold h^i\}$ and $\bold h'=\{\bold h'^i\}$ 
are bases of $H_i$; then,
$$
{\rm Tor}(C_*,\bold c',\bold h')
=
\prod_{i=0}\Big(\frac{[\bold c'^{i}/\bold c^i])}{[\bold h'^{i}/\bold h^i]}\Big)^{(-1)^i}
{\rm Tor}(C_*,\bold c,\bold h).
$$
In particular, if $C_*$ is acyclic, then 
$$
{\rm Tor}(C_*,\bold c',\emptyset)
=
\prod_{i=0}[\bold c'^{i}/\bold c^i]^{(-1)^i}
{\rm Tor}(C_*,\bold c,\emptyset).
$$
\end{lemma}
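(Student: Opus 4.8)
The plan is to reduce everything to the definition of the torsion (Definition~\ref{def:torsion}) and then track how each individual factor $[d_{i+1}(\bold b^{i+1})\bold{\tilde h}^i\bold b^i/\bold c^i]$ changes when we replace $\bold c$ by $\bold c'$ and $\bold h$ by $\bold h'$. The crucial observation is that the sets $\bold b^i$ of vectors whose boundaries form a basis of $B_{i-1}$ depend only on the boundary maps, not on the choice of $\bold c$ or $\bold h$; so we may use the \emph{same} collection $\{\bold b^i\}$ in computing both $\text{Tor}(C_*,\bold c,\bold h)$ and $\text{Tor}(C_*,\bold c',\bold h')$. Likewise the set $d_{i+1}(\bold b^{i+1})$ of vectors in $C_i$ is unchanged. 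Hence the only differences between the two torsions are (a) the lifts $\bold{\tilde h}^i$ versus $\bold{\tilde h}'^i$ of the homology bases, and (b) the target basis $\bold c^i$ versus $\bold c'^i$.

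First I would fix, for each $i$, the basis $\bold e^i := d_{i+1}(\bold b^{i+1})\,\bold{\tilde h}^i\,\bold b^i$ of $C_i$ and the analogous $\bold e'^i := d_{i+1}(\bold b^{i+1})\,\bold{\tilde h}'^i\,\bold b^i$, so that by definition
$$
\text{Tor}(C_*,\bold c,\bold h)=\prod_{i=0}^n[\bold e^i/\bold c^i]^{(-1)^{i+1}},\qquad
\text{Tor}(C_*,\bold c',\bold h')=\prod_{i=0}^n[\bold e'^i/\bold c'^i]^{(-1)^{i+1}}.
$$
Using the cocycle property of the bracket, $[\mathbf a/\mathbf b]=[\mathbf a/\mathbf b']\,[\mathbf b'/\mathbf b]$ for bases $\mathbf a,\mathbf b,\mathbf b'$ of the same space, I would write $[\bold e'^i/\bold c'^i]=[\bold e'^i/\bold e^i]\,[\bold e^i/\bold c^i]\,[\bold c^i/\bold c'^i]$. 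So it remains to identify the factor $[\bold e'^i/\bold e^i]$. Since $\bold e^i$ and $\bold e'^i$ differ only in their middle block, namely $\bold{\tilde h}^i$ versus $\bold{\tilde h}'^i$, and since a change of the lifts within $Z_i$ by elements of $B_i$ contributes $1$ to the determinant while a change of the \emph{homology} basis $\bold h^i\mapsto\bold h'^i$ contributes exactly $[\bold h'^i/\bold h^i]$, the transition matrix from $\bold e^i$ to $\bold e'^i$ is block-triangular with that single nontrivial block, giving $[\bold e'^i/\bold e^i]=[\bold h'^i/\bold h^i]$. Substituting and collecting the factors with the sign $(-1)^{i+1}$ turns the product $\prod_i\big([\bold h'^i/\bold h^i]\,[\bold c^i/\bold c'^i]\big)^{(-1)^{i+1}}$ into $\prod_i\big([\bold c'^i/\bold c^i]/[\bold h'^i/\bold h^i]\big)^{(-1)^i}$, which is the claimed formula; the acyclic case is the specialization $\bold h=\bold h'=\emptyset$.

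The main obstacle is the justification of the identity $[\bold e'^i/\bold e^i]=[\bold h'^i/\bold h^i]$: one must argue carefully that the change-of-basis matrix on $C_i$ between $d_{i+1}(\bold b^{i+1})\bold{\tilde h}^i\bold b^i$ and $d_{i+1}(\bold b^{i+1})\bold{\tilde h}'^i\bold b^i$ is block upper-triangular with respect to the filtration $B_i\subset Z_i\subset C_i$ (so that its determinant is the product of the three diagonal block determinants, two of which are $1$), and that the middle block represents precisely the induced map $H_i\to H_i$ sending $\bold h^i$ to $\bold h'^i$. This is exactly the content of the classical fact that $\text{Tor}$ is independent of the choice of $\bold b^i$ and of the lifts $\bold{\tilde h}^i$, already invoked after Definition~\ref{def:torsion}; I would cite Proposition~0.2 in \cite{porti1} (or Turaev \cite{turaev2}) for this, rather than reprove it, and devote the bulk of the written proof to the bookkeeping of signs and the application of the cocycle rule for the bracket.
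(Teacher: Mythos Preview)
Your proposal is correct and in fact goes further than the paper: the paper does not prove this lemma at all but simply states it and refers the reader to Proposition~0.2 in \cite{porti1}. Your outline via the cocycle identity $[\mathbf a/\mathbf b]=[\mathbf a/\mathbf b']\,[\mathbf b'/\mathbf b]$ and the block-triangular computation of $[\bold e'^i/\bold e^i]$ with respect to the filtration $B_i\subset Z_i\subset C_i$ is exactly the standard argument behind the cited reference, and your bookkeeping of the signs $(-1)^{i+1}\mapsto(-1)^i$ is accurate.
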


Let $0\to V'\xrightarrow{i} V\xrightarrow{j} V''\to 0$ 
be a split short exact sequence of finite-dimensional vector spaces 
and $s$ a section of $j$. 
Thus, $i\oplus s:V'\oplus V''\to V$ is an isomorphism. 
Let 
$\bold b'=\{b'_1,\ldots , b'_p\}, 
 \bold b=\{b_1,\ldots , b_q\},
 \bold b''=\{b''_1,\ldots , b''_r\}$  
be bases of $V',V,V''$, respectively, so $p+r=q$.
We say that the bases $\bold b',\bold b$ and $\bold b''$ 
are {\it compatible} 
if the isomorphism $i\oplus s:V'\oplus V''\to V$ has 
determinant 1 in the bases 
$\bold b'\cup \bold b''
 = (\bold b'_1, \ldots , \bold b'_p,\bold b''_1, \ldots , \bold b''_r)$
 of $V'\oplus V''$ and $\bold b$ of $V$.

Let us review the multiplicativity property of the Reidemeister torsion. 
The equation including their signs is known, 
but we omit the signs since they are not important herein. 
For the details, see Theorem 3.2 in \cite{milnor2} and Lemma 3.4.2 in \cite{turaev1}.
Let $0\to C'_*\to C_* \to C''_* \to 0$ be an exact sequence of chain complexes. 
Assume that $C'_*, C_*$ and $C''_*$ are equipped with 
bases and homology bases. 
Let $\bold c'^i,\bold c^i$ and $\bold c''^i$ denote the preferred bases 
of $C'_*, C_*$ and $C''_*$, respectively, 
and $\bold h', \bold h, \bold h''$ be the their homology bases, respectively. 
Then, the long exact sequence in homology 
$$
\cdots\to H_i(C'_*)\to H_i(C_*)\to H_i(C''_*)\to H_{i-1}(C'_*)\to\cdots
$$
can be considered as an acyclic chain complex $\mathcal H_*$ 
by setting 
$$\mathcal H_{3i}=H_i(C''_*),
 \mathcal H_{3i+1}=H_i(C_*),
 \mathcal H_{3i+2}=H_i(C'_*)$$
with preferred bases, respectively. 
\begin{lemma}\label{lem:basechange2}
If for all $i$, the bases $\bold c'^i,\bold c^i$ and $\bold c''^i$ 
are compatible, then 
$$
{\rm Tor}(C_*,\bold c,\bold h)
=
{\rm Tor}(C'_*,\bold c',\bold h')\cdot
{\rm Tor}(C''_*,\bold c'',\bold h'')\cdot
{\rm Tor}(\mathcal H_*,\{\bold h',\bold h,\bold h''\},\emptyset).
$$
In particular, if $C'_*,C_*$ and $C''_*$ are acyclic, 
then 
$$
{\rm Tor}(C_*,\bold c,\emptyset)
=
{\rm Tor}(C'_*,\bold c',\emptyset)\cdot
{\rm Tor}(C''_*,\bold c'',\emptyset).
$$
\end{lemma}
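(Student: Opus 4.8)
The plan is to reproduce the classical proof of multiplicativity of Reidemeister torsion (Milnor, Theorem 3.2 of \cite{milnor2}; Turaev, Lemma 3.4.2 of \cite{turaev1}), carried out entirely in $\mathbb{F}^*/\{\pm 1\}$ so that permutations of bases and signs of determinants cause no trouble; the acyclic assertion is then the special case in which $\mathcal H_*=0$. First, for each $i$ the sequence $0\to C'_i\xrightarrow{\iota_i}C_i\xrightarrow{\pi_i}C''_i\to 0$ of finite-dimensional $\mathbb F$-vector spaces splits, so fix a section $s_i$ of $\pi_i$. Following Definition \ref{def:torsion}, choose in $C'_*$ and $C''_*$ the auxiliary data $\bold b'^i,\bold{\tilde{h}}'^i$ and $\bold b''^i,\bold{\tilde{h}}''^i$ entering the definitions of $\mathrm{Tor}(C'_*,\bold c',\bold h')$ and $\mathrm{Tor}(C''_*,\bold c'',\bold h'')$, and put $\bold b^i:=\iota_i(\bold b'^i)\cup s_i(\bold b''^i)$ in $C_*$. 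Applying $\pi$ and $d$ to the given short exact sequences shows that $0\to B'_{i-1}\to B_{i-1}\to B''_{i-1}\to 0$ is exact, so $d_i(\bold b^i)$ is a basis of $B_{i-1}$ and $\bold b^i$ is an admissible choice for $C_*$.

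The one non-formal point is that $\iota_i(C'_i)\cap B_i$ can be strictly larger than $\iota_i(B'_i)$, the quotient being identified with the image of the connecting homomorphism $\delta\colon H_{i+1}(C''_*)\to H_i(C'_*)$; equivalently, a lift $\bold{\tilde{h}}^i$ of $\bold h^i$ in $Z_i$ cannot be taken to be $\iota_i(\bold{\tilde{h}}'^i)$ together with $s_i(\bold{\tilde{h}}''^i)$. To deal with this, view the homology long exact sequence as the acyclic complex $\mathcal H_*$, and fix once and for all a $\bold b$-system for $\mathcal H_*$ (in each degree, vectors mapping to a basis of the image in the next term) together with the prescribed homology vectors $\bold h',\bold h,\bold h''$. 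Pushing these choices back along $\iota_*$, $\pi_*$, $\delta$ and the sections $s_i$ produces lifts $\bold{\tilde{h}}'^i,\bold{\tilde{h}}^i,\bold{\tilde{h}}''^i$ and $\bold b$-systems $\bold b'^i,\bold b^i,\bold b''^i$ that are simultaneously compatible with $d'$, $d$ and $d''$; in particular the ordered basis $d_{i+1}(\bold b^{i+1})\,\bold{\tilde{h}}^i\,\bold b^i$ of $C_i$ differs, via $\iota_i\oplus s_i$, from the evident combination of the corresponding bases $d'_{i+1}(\bold b'^{i+1})\,\bold{\tilde{h}}'^i\,\bold b'^i$ of $C'_i$ and $d''_{i+1}(\bold b''^{i+1})\,\bold{\tilde{h}}''^i\,\bold b''^i$ of $C''_i$ only by vectors coming from the $\bold b$-system of $\mathcal H_*$. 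By the independence of $\mathrm{Tor}$ from the auxiliary choices (stated just after Definition \ref{def:torsion}), making these particular compatible choices is legitimate.

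It then remains to compare with the preferred bases and collect determinants. The compatibility hypothesis on $\bold c'^i,\bold c^i,\bold c''^i$ says precisely that $\iota_i\oplus s_i$ sends $\bold c'^i\cup\bold c''^i$ to $\bold c^i$ with determinant $1$, so we may replace $\bold c^i$ by $\iota_i(\bold c'^i)\cup s_i(\bold c''^i)$ at no cost. After this replacement the transition matrix from $d_{i+1}(\bold b^{i+1})\,\bold{\tilde{h}}^i\,\bold b^i$ to $\bold c^i$ is block triangular, and one reads off, modulo $\pm 1$, that $[d_{i+1}(\bold b^{i+1})\bold{\tilde{h}}^i\bold b^i/\bold c^i]$ is the product of the corresponding ratio for $C'_*$, the one for $C''_*$, and the $\mathcal H_*$-contribution in degree $i$. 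Forming the alternating product over $i$ as in Definition \ref{def:torsion}, these three families of factors assemble respectively into $\mathrm{Tor}(C'_*,\bold c',\bold h')$, $\mathrm{Tor}(C''_*,\bold c'',\bold h'')$ and $\mathrm{Tor}(\mathcal H_*,\{\bold h',\bold h,\bold h''\},\emptyset)$, which is the claimed identity. When $C'_*,C_*,C''_*$ are acyclic we have $\mathcal H_*=0$, the homology vectors and corrections disappear, and the formula collapses to $\mathrm{Tor}(C_*,\bold c,\emptyset)=\mathrm{Tor}(C'_*,\bold c',\emptyset)\cdot\mathrm{Tor}(C''_*,\bold c'',\emptyset)$.

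The main obstacle is the homology bookkeeping described above: arranging the $\bold b$-vectors and homology lifts in $C'_*$, $C_*$ and $C''_*$ so that they are compatible with all three boundary operators \emph{and} with the connecting maps of the long exact sequence, so that the error term is genuinely $\mathrm{Tor}(\mathcal H_*,\dots)$ and not merely some quantity with the same determinant up to an uncontrolled factor. This is a careful but essentially routine diagram chase; once it is in place, the determinant computation is pure multilinear algebra (alternating products of block-triangular determinants), and no sign analysis is needed here since we work modulo $\pm 1$ throughout.
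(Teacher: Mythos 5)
The paper does not actually prove this lemma: it is quoted from Milnor (Theorem 3.2 of \cite{milnor2}) and Turaev (Lemma 3.4.2 of \cite{turaev1}), so your proposal has to be judged against that classical argument, which is indeed the one you are trying to reproduce. The outline is right, but your first paragraph contains a false step: the sequence $0\to B'_{i-1}\to B_{i-1}\to B''_{i-1}\to 0$ of boundary subspaces is \emph{not} exact in general, so $\mathbf{b}^i:=\iota_i(\mathbf{b}'^i)\cup s_i(\mathbf{b}''^i)$ need not be an admissible system for $C_*$. Already for $C'_*=(0\to 0\to\mathbb F)$, $C''_*=(0\to\mathbb F\to 0)$ and $C_*=(0\to\mathbb F\xrightarrow{\ \mathrm{id}\ }\mathbb F)$ one has $B_0(C_*)=\mathbb F$ while $B_0(C'_*)=B_0(C''_*)=0$, so $d_1(\mathbf{b}^1)$ cannot be a basis of $B_0(C_*)$; in general $\dim B_{i-1}=\dim B'_{i-1}+\dim B''_{i-1}+\dim\mathrm{im}\bigl(\delta\colon H_i(C''_*)\to H_{i-1}(C'_*)\bigr)$, and this defect is exactly where the factor ${\rm Tor}(\mathcal H_*,\{\mathbf h',\mathbf h,\mathbf h''\},\emptyset)$ is created. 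Your second paragraph acknowledges precisely this phenomenon (that $\iota_i(C'_i)\cap B_i$ may exceed $\iota_i(B'_i)$), which contradicts the exactness claim you have just used; but the repair --- enlarging the $\mathbf b$-system of $C_*$ by vectors whose boundaries realize $\mathrm{im}\,\delta$, and choosing lifts and $\mathbf b$-vectors in all three complexes coherently with the long exact sequence so that the comparison of bases is genuinely block-triangular with the $\mathcal H_*$-contribution as the extra block --- is the actual content of Milnor's proof, and you defer it as a ``routine diagram chase'' while asserting, rather than exhibiting, the block-triangular form. As written, the general formula is therefore not established.

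By contrast, the acyclic special case --- which is the only case the paper uses, in Lemma \ref{lem:basechange4} --- does follow from your paragraph-1 construction: when $C'_*,C_*,C''_*$ are acyclic the connecting maps vanish, $Z_*=B_*$, the boundary sequences are exact, your $\mathbf b^i$ is admissible, and the determinant comparison splits as you say. So either restrict the argument to the acyclic case you actually need, or carry out the compatible-choice bookkeeping in full as in Theorem 3.2 of \cite{milnor2}, instead of folding it into an unproved compatibility assertion.
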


Let $M$ be a compact hyperbolic 3-manifold 
as in Sections \ref{sec:menal-porti} and \ref{sec:dubois}. 
Let $C_*=C_*(M;V_{2k+1})$ and $C_*(t)=C_*(M;V_{2k+1}(t))$
as defined in Section \ref{sec:pre}; namely, 
$C_*$ and $C_*(t)$ are obtained from a CW-complex 
of $M$; see (\ref{eq:chain1}) and (\ref{eq:chain2}). 
We denote by $\bold c=\{\bold c^{i}\}$ and 
$1\otimes\bold c=\{1\otimes\bold c^{i}\}$ 
the bases of $C_*$ and $C_*(t)$, respectively. 
We define $C'_*$ and $C'_*(t)$ as follows: 
The complex $C'_*$ is the subchain complex of $C_*$, 
which is a lift of the homology group $H_*(M;V_{2k+1})$. 
Note that the boundary operators are all zero by this definition. 
According to Proposition \ref{prop:basis}, 
$C'_2$ is spanned by 
$\{v_{\ell}\otimes T_{\ell}~|~1\le\ell\le b\}$
over $\mathbb C$ and 
$C'_1$ is spanned by $\{v_{\ell}\otimes \lambda_{\ell}~|~1\le\ell\le b\}$
over $\mathbb C$, 
where 
$\lambda_{\ell}$ is a longitude on $T_{\ell}$, 
associated with (\ref{eq:dubois-yamaguchi}).
Set $C'_3=C'_0=0$.
Moreover,
set $\bold c'^2=\{v_1\otimes T_1,\ldots , v_b\otimes T_{b}\}$ and
$\bold c'^1=\{v_1\otimes \lambda_1,\ldots , v_b\otimes \lambda_{b}\}$, 
and we say that $C'_*$ is spanned by $\bf c'$.
Similarly, 
we define that
$C'_2(t)$ is spanned by 
$1\otimes\bold c'^2
=\{1\otimes v_{1}\otimes T_{1},\ldots , 1\otimes v_{b}\otimes T_{b}\}$, 
$C'_1(t)$ is spanned by 
$\bold c'^1(t)
=\{1\otimes v_{1}\otimes \lambda_{1},\ldots , 1\otimes v_{b}\otimes \lambda_{b}\}$. 
Set $C'_3(t)=C'_0(t)=0$ and 
we say that $C'_*(t)$ is spanned by $1\otimes\bold c'$. 
We define $C''_*$ ($C''(t)$, resp.) 
as the quotient complex
$C_*/C'_*$ ($C_*(t)/C'_*(t)$, resp.). 
We endow it with the basis $\bold c''$ 
($1\otimes\bold c''$, resp.). 
Note that we sometimes abbreviate $1\otimes\bold c$ to 
$\bold c(t)$ and $1\otimes\bold c^i$ to 
$\bold c^i(t)$ for simplicity. 
Similarly, 
we abbreviate 
$1\otimes\bold c'$ ($1\otimes\bold c''$, resp.) to 
$\bold c'(t)$ ($\bold c''(t)$, resp.) and 
$1\otimes\bold c'^i$ ($1\otimes\bold c''^i$, resp.) to 
$\bold c'^i(t)$ ($\bold c''^i(t)$, resp.).
From the definition in Section \ref{sec:pre}, 
we obtain 
\begin{equation}\label{eq:baseabb}
\lim_{t\to 1}\bold c^i(t)=\bold c^i,\,\,
\lim_{t\to 1}\bold c'^i(t)=\bold c'^i,\,\,\text{ and } 
\lim_{t\to 1}\bold c''^i(t)=\bold c''^i.
\end{equation}
As in the calculation $(\ref{eq:boundaryoperator1})$, 
the boundary operator $d'_2$ works as follows: 
\begin{equation}\label{eq:boundaryoperator2}
d'_2: 1\otimes v_{\ell}\otimes T_{\ell}
\mapsto
(t^{a(\ell)}-1)\cdot(1\otimes v_{\ell}\otimes\lambda_{\ell}).
\end{equation}
Thus, we have the subchain complex of $C_*(t)$ 
\begin{equation}\label{eq:chain3}
C'_*(t):
0\to C'_3(t) \to C'_2(t)\xrightarrow{d'_2}
C'_1(t)\to C'_{0}(t)\to 0.
\end{equation}
Moreover, we have the following short exact sequences 
of complexes: 
\begin{equation}\label{eq:complex1}
0\to C'_*\to C_*\to C''_*\to 0;
\end{equation}
\begin{equation}\label{eq:complex2}
0\to C'_*(t)\to C_*(t)\to C''_*(t)\to 0.
\end{equation}

\begin{lemma}\label{lem:acyclic1}
$H_i(C''_*)=0$ for $i=0,1,2,3$.
\end{lemma}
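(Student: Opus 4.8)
The plan is to run the long exact homology sequence attached to the short exact sequence of complexes~(\ref{eq:complex1}), taking advantage of the fact that $C'_*$ was manufactured precisely as a lift of the homology of $C_*$: the inclusion $\iota\colon C'_*\hookrightarrow C_*$ should induce an isomorphism on homology in exactly the degrees where both complexes carry homology, and the rest of the sequence will then force $H_*(C''_*)$ to vanish.

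The first step is to pin down the homology of the two outer terms. Since $M$ is a compact $3$-manifold with non-empty (toral) boundary, it has the homotopy type of a $2$-dimensional CW complex, whence $H_i(M;V_{2k+1})=0$ for $i\ge 3$; in particular $H_3(M;V_{2k+1})=0$. Combined with Proposition~\ref{prop:dim} in the odd case and the fact that over the field $\mathbb C$ the $V_{2k+1}$-twisted homology and cohomology agree in dimension (universal coefficients), this yields $H_0(M;V_{2k+1})=H_3(M;V_{2k+1})=0$ and $\dim_{\mathbb C}H_1(M;V_{2k+1})=\dim_{\mathbb C}H_2(M;V_{2k+1})=b$. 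As for $C'_*$, its boundary maps all vanish by construction, so $H_i(C'_*)=C'_i$; thus $H_0(C'_*)=H_3(C'_*)=0$, while $H_1(C'_*)$ and $H_2(C'_*)$ are the $b$-dimensional spaces spanned by $\{v_\ell\otimes\lambda_\ell\}_{\ell=1}^b$ and $\{v_\ell\otimes T_\ell\}_{\ell=1}^b$, respectively.

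The second step is to check that $\iota_*\colon H_i(C'_*)\to H_i(C_*)$ is an isomorphism for every $i\in\{0,1,2,3\}$. For $i=0$ and $i=3$ this is trivial, both sides being $0$. For $i=1$ and $i=2$, the generating cycles $v_\ell\otimes\lambda_\ell\in C'_1$ and $v_\ell\otimes T_\ell\in C'_2$ are carried by $\iota$ to homology classes of $C_*$, and by Proposition~\ref{prop:basis}---reading the longitude $\lambda_\ell$, which is a non-trivial cycle in $H_1(T_\ell;\mathbb Z)$, as the chosen cycle $\theta_\ell$ there---these are precisely a basis of $H_1(M;V_{2k+1})$, resp.\ of $H_2(M;V_{2k+1})$. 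Since domain and target have the same finite dimension and $\iota_*$ sends a basis to a basis, it is an isomorphism.

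Finally, I would substitute this into the long exact sequence
\[
\cdots\longrightarrow H_i(C'_*)\xrightarrow{\ \iota_*\ }H_i(C_*)\longrightarrow
H_i(C''_*)\longrightarrow H_{i-1}(C'_*)\xrightarrow{\ \iota_*\ }H_{i-1}(C_*)
\longrightarrow\cdots
\]
of~(\ref{eq:complex1}) (with the convention $H_{-1}=0$). Since $\iota_*$ is an isomorphism in every degree $0,1,2,3$, exactness at $H_i(C_*)$ shows that the map $H_i(C_*)\to H_i(C''_*)$ is zero, and exactness at $H_i(C''_*)$ then shows that $H_i(C''_*)\hookrightarrow H_{i-1}(C'_*)$ is injective with image $\ker\iota_*=0$; hence $H_i(C''_*)=0$ for $i=0,1,2,3$. (For $i\ge 4$ the claim is vacuous, since $C''_i=0$.) The argument is a pure diagram chase; the only part demanding attention---and the main obstacle---is the identification made in the second step, namely that the \emph{specific} subcomplex $C'_*$ fixed in the text, using the longitudes $\lambda_\ell$ as its $1$-cycles, is taken by $\iota$ onto the Menal-Ferrer--Porti bases of $H_1(M;V_{2k+1})$ and $H_2(M;V_{2k+1})$, together with the input $H_3(M;V_{2k+1})=0$ coming from the $2$-complex homotopy type of $M$.
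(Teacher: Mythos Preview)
Your proof is correct and follows essentially the same strategy as the paper: run the long exact homology sequence of the short exact sequence~(\ref{eq:complex1}) and use that $C'_*$ was constructed as a lift of $H_*(M;V_{2k+1})$, so that the inclusion induces $H_i(C'_*)\cong H_i(C_*)$. The paper is terser---it asserts $H_i(C''_*)=0$ for $i=0,3$ ``by definition'' and then invokes $H_i(C'_*)\cong H_i(C_*)$ without further comment---whereas you spell out explicitly that the map $\iota_*$ (not merely an abstract isomorphism) carries the chosen generators onto the Menal-Ferrer--Porti bases, and you justify $H_3(M;V_{2k+1})=0$ via the $2$-complex homotopy type of a bounded $3$-manifold; these are useful clarifications, not a different argument.
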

\begin{proof}
By definition, we have $H_i(C''_*)=0$ for $i=0,3$.
From the short exact sequence (\ref{eq:complex1}), 
we obtain the next long exact sequence 
$$
\cdots\to H_i(C'_*)\to H_i(C_*)\to 
H_i(C''_*)\to H_{i-1}(C'_*)\to H_{i-1}(C_*)\to\cdots.  
$$
By the definition of $C'_*$, 
we have $H_i(C'_*)\cong H_{i}(C_*)$, 
which leads to the conclusion that $H_i(C''_*)=0$ for $i=1,2$.
\end{proof}

\begin{lemma}\label{lem:acyclic2}
For $i=0,1,2,3$,
the following holds:
{\rm (1)} $H_i(C'_*(t))=0$; 
{\rm (2)} $H_i(C_*(t))=0$; 
{\rm (3)} $H_i(C''_*(t))=0$. 
\end{lemma}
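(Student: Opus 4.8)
The plan is to prove the three acyclicity statements in order, using the machinery already assembled. For part (1), I would argue directly from the structure of the two-term complex $C'_*(t)$ displayed in (\ref{eq:chain3}), whose only nontrivial boundary map is $d'_2$ given explicitly in (\ref{eq:boundaryoperator2}). Since $d'_2$ sends $1\otimes v_\ell\otimes T_\ell$ to $(t^{a(\ell)}-1)\cdot(1\otimes v_\ell\otimes\lambda_\ell)$ and the elements $1\otimes v_\ell\otimes\lambda_\ell$ form a basis of $C'_1(t)$ over $\mathbb F(t)$, the map $d'_2$ is represented by the diagonal matrix with entries $t^{a(\ell)}-1$. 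Each $a(\ell)$ is a positive integer by the conventions around (\ref{eq:dubois-yamaguchi}) (recall $\alpha_h\circ(i|_{T_\ell})_*$ is nontrivial, so $t^{a(\ell)}-1\neq 0$ in $\mathbb F(t)$), hence $d'_2$ is an isomorphism. Therefore $H_2(C'_*(t))=\ker d'_2=0$ and $H_1(C'_*(t))=\operatorname{coker} d'_2=0$; the vanishing for $i=0,3$ is immediate since $C'_0(t)=C'_3(t)=0$.

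For part (2), the point is that $C_*(t)=C_*(M;V_{2k+1}(t))$ is exactly the $V_{\rho_{2k+1}}(t)$-twisted chain complex, so its homology $H_i(C_*(t))=H_i(M;V_{2k+1}(t))$ vanishes for all $i$ by Proposition \ref{prop:homology1} (whose hypotheses on $n$ odd are met here with $n=2k+1$). This step is essentially a citation; I would just record that Proposition \ref{prop:homology1} applies and conclude acyclicity of $C_*(t)$.

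Part (3) then follows formally from (1) and (2) via the long exact sequence in homology associated with the short exact sequence of complexes (\ref{eq:complex2}), namely
\[
\cdots\to H_i(C'_*(t))\to H_i(C_*(t))\to H_i(C''_*(t))\to H_{i-1}(C'_*(t))\to\cdots.
\]
Since the two outer terms $H_i(C'_*(t))$ and $H_{i-1}(C'_*(t))$ vanish by part (1) and the middle term $H_i(C_*(t))$ vanishes by part (2), exactness forces $H_i(C''_*(t))=0$ for every $i$; in particular for $i=0,1,2,3$, which is the only range where $C''_*(t)$ can be nonzero.

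I do not expect a serious obstacle here: each piece is either a one-line determinant computation, an application of a proposition already proved in Section \ref{sec:dubois}, or a diagram chase. The only point requiring a small amount of care is confirming that $t^{a(\ell)}-1$ is genuinely a unit in the field $\mathbb F(t)$, i.e.\ that each $a(\ell)\ge 1$; this is guaranteed by the standing assumption that the restriction of $\alpha_h$ to each boundary torus is nontrivial, which was imposed at the start of Section \ref{sec:dubois}. With that in hand the argument is routine.
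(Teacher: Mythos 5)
Your proof is correct and follows the same route as the paper's: invertibility of $d'_2$ for part (1), citation of Proposition \ref{prop:homology1} for part (2), and the long exact sequence from the short exact sequence (\ref{eq:complex2}) for part (3). The only difference is that you spell out why $d'_2$ is invertible (diagonal with nonzero entries $t^{a(\ell)}-1$), which the paper merely asserts.
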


\begin{proof}
(1) Since the map $d'_2$ is invertible, 
we have $H_*(C'(t))=0$. 
(2) This is from Proposition \ref{prop:homology1}.
(3) This can be proved by the long exact sequence in homology, 
which is induced by the short exact sequence (\ref{eq:complex2}).
\end{proof}

Using the exact sequence (\ref{eq:complex2}), 
we endow $C_*(t)$ with the basis 
$\bold c'(t)\cup \bold c''(t)$
obtained by lifting and concatenation.
Here,  
we compare the Reidemeister torsions of a hyperbolic 
3-manifold $M$ associated with the basis 
$\bold c'(t)\cup \bold c''(t)$, 
$\bold c'(t)$ and $\bold c''(t)$. 
We write 
$\text{Tor}(C_*(t),\bold c'(t)\bold c''(t),\emptyset)$
($\text{Tor}(C'_*(t),\bold c'(t),\emptyset),
  \text{Tor}(C''_*(t),\bold c''(t),\emptyset)$, resp.) 
the Reidemeister torsion of $C_*(t)$ ($C'_*(t), C''_*(t)$, resp.)
computed in the basis 
$\bold c'(t)\cup\bold c''(t)$
($\bold c'(t), \bold c''(t)$, resp.). 
On the other hand, 
$\text{Tor}(C_*(t),\bold c(t),\emptyset)\\
=\text{Tor}(C_*(M;V_{2k+1}(t)),\bold c(t),\emptyset)$ 
is the Reidemeister torsion of the chain complex 
$C_*(t)$ endowed with the basis $\bold c(t)$, 
which is the twisted Alexander invariant $\Delta_{M,\rho_{2k+1}}(t)$
(see Definition \ref{def:TAI}). 
Note that $\bold c$ is different from $\bold c'\cup\bold c''$.

\begin{lemma}\label{lem:basechange3}
$${\rm Tor}(C_*(t),\bold c(t),\emptyset)
=
{\rm Tor}(C_*(t),\bold c'(t)\bold c''(t),\emptyset)
\cdot\prod_{i=0}^2
[\bold c^i(t)/\bold c'^i(t)\bold c''^i(t)]^{(-1)^i}.
$$ 
\end{lemma}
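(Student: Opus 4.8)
The plan is to obtain this identity as a direct application of the acyclic case of the base-changing lemma (Lemma \ref{lem:basechange1}), once we have checked that $\bold c(t)$ and $\bold c'(t)\bold c''(t)$ are both bases of the same acyclic complex $C_*(t)$.

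First I would recall that, since $M$ is a compact $3$-manifold with non-empty torus boundary, it has the homotopy type of a $2$-dimensional CW-complex, so by the simple homotopy invariance of the torsion we may take the CW-structure underlying $C_*(t)=C_*(M;V_{2k+1}(t))$ to be $2$-dimensional; thus $C_*(t)$ is concentrated in degrees $0,1,2$, which explains why the product in the statement runs only over $i=0,1,2$. The complex $C_*(t)$ is acyclic by Lemma \ref{lem:acyclic2}(2). The basis $\bold c(t)=1\otimes\bold c$ is the one coming from the chosen CW-structure (the one computing $\Delta_{M,\rho_{2k+1}}(t)$, cf.\ Definition \ref{def:TAI}), while $\bold c'(t)\bold c''(t)$ is the basis assembled from the short exact sequence of complexes (\ref{eq:complex2}): in each degree $i$ one takes $\bold c'^i(t)$, a basis of the subspace $C'_i(t)\subset C_i(t)$, together with a lift of $\bold c''^i(t)$, a basis of the quotient $C''_i(t)=C_i(t)/C'_i(t)$; since (\ref{eq:complex2}) is degreewise a short exact sequence of vector spaces, this concatenation is again a basis of $C_i(t)$. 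In particular the change-of-basis determinants $[\bold c^i(t)/\bold c'^i(t)\bold c''^i(t)]$ are well-defined nonzero rational functions for $i=0,1,2$.

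Then I would apply the ``in particular'' statement of Lemma \ref{lem:basechange1} to the acyclic complex $C_*(t)$ with the pair of bases $\bold c'(t)\bold c''(t)$ and $\bold c(t)$, which gives
$$
{\rm Tor}(C_*(t),\bold c'(t)\bold c''(t),\emptyset)
=\prod_{i=0}^2[\bold c'^i(t)\bold c''^i(t)/\bold c^i(t)]^{(-1)^i}\,
{\rm Tor}(C_*(t),\bold c(t),\emptyset).
$$
Since $[\bold c'^i(t)\bold c''^i(t)/\bold c^i(t)]=[\bold c^i(t)/\bold c'^i(t)\bold c''^i(t)]^{-1}$, multiplying through by $\prod_{i=0}^2[\bold c^i(t)/\bold c'^i(t)\bold c''^i(t)]^{(-1)^i}$ rearranges this into the asserted formula.

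There is no real obstacle in this lemma; it is essentially bookkeeping. The two small points to keep in mind are that the complex truncates at degree $2$ (so no $i=3$ factor appears, and in degree $0$ one has $\bold c'^0(t)=\emptyset$, so $\bold c'^0(t)\bold c''^0(t)$ is just a basis of $C_0(t)$), and that, as everywhere in this paper, the equality is understood in $\mathbb{C}^*(t)/\{\pm 1\}$, i.e.\ up to sign.
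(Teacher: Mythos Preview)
Your proof is correct and follows exactly the paper's approach: the lemma is a direct application of the acyclic base-changing formula (Lemma~\ref{lem:basechange1}), and the paper's own proof is the single sentence ``We obtain this lemma from Lemma~\ref{lem:basechange1}.'' One small caveat: your justification for truncating the product at $i=2$ via a $2$-dimensional CW-model is valid in isolation, but the paper actually works throughout with a $3$-dimensional complex (it uses $d_3(\bold b^3)$ in Lemma~\ref{lem:basechange6} and in the proof of Theorem~\ref{thm:limit}); the reason no $i=3$ factor appears is simply that $C'_3(t)=0$, so $\bold c''^3(t)=\bold c^3(t)$ and $[\bold c^3(t)/\bold c'^3(t)\bold c''^3(t)]=1$.
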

\begin{proof}
We obtain this lemma from Lemma \ref{lem:basechange1}.
\end{proof}

By Lemma \ref{lem:basechange2}, we have the next lemma. 
Note that the bases are compatible by definition.

\begin{lemma}\label{lem:basechange4}
$$
{\rm Tor}(C_*(t),\bold c'(t)\bold c''(t),\emptyset)
=
{\rm Tor}(C'_*(t),\bold c'(t),\emptyset)
\cdot
{\rm Tor}(C''_*(t),\bold c''(t),\emptyset).
$$
\end{lemma}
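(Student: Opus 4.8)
The plan is simply to invoke the multiplicativity lemma, Lemma \ref{lem:basechange2}, applied to the short exact sequence of chain complexes (\ref{eq:complex2}),
$$
0\to C'_*(t)\to C_*(t)\to C''_*(t)\to 0.
$$
First I would record that, by Lemma \ref{lem:acyclic2}, all three complexes $C'_*(t)$, $C_*(t)$ and $C''_*(t)$ are acyclic. Consequently the associated long exact homology sequence $\mathcal H_*$ of Lemma \ref{lem:basechange2} is the zero complex, so ${\rm Tor}(\mathcal H_*,\cdot\,,\emptyset)$ is trivial, and the general formula in Lemma \ref{lem:basechange2} collapses to its stated special case for acyclic complexes. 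That special case is exactly the identity we must prove, once the compatibility of bases is verified.

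The second step is therefore to check that, in each degree $i$, the bases $\bold c'^i(t)$ of $C'_i(t)$, $\bold c'^i(t)\cup\bold c''^i(t)$ of $C_i(t)$, and $\bold c''^i(t)$ of $C''_i(t)$ are compatible in the sense defined above. But this holds by construction: the basis of $C_*(t)$ was defined just before the lemma by lifting $\bold c''(t)$ along a section $s$ of the surjection $C_*(t)\to C''_*(t)$ and concatenating the result with the image of $\bold c'(t)$ under the inclusion. Hence the isomorphism $i\oplus s:C'_i(t)\oplus C''_i(t)\to C_i(t)$ carries the concatenated basis onto $\bold c'^i(t)\cup\bold c''^i(t)$, so it has determinant $1$ in these bases, which is precisely the compatibility condition.

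Putting these two observations together, Lemma \ref{lem:basechange2} applies verbatim and yields
$$
{\rm Tor}(C_*(t),\bold c'(t)\bold c''(t),\emptyset)
=
{\rm Tor}(C'_*(t),\bold c'(t),\emptyset)\cdot
{\rm Tor}(C''_*(t),\bold c''(t),\emptyset),
$$
as claimed. I do not expect any genuine obstacle here; the proof is a one-line application of the multiplicativity lemma. The only point deserving a moment's care is that "compatible" must be read with respect to the very section $s$ used to build the basis of $C_*(t)$, so that the potential base-change correction factor of Lemma \ref{lem:basechange1} is trivial and does not intervene.
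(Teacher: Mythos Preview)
Your proposal is correct and follows exactly the paper's approach: the paper's entire proof is the one-line remark that the lemma follows from Lemma~\ref{lem:basechange2} together with the observation that the bases are compatible by definition. Your write-up simply spells out the acyclicity (via Lemma~\ref{lem:acyclic2}) and the compatibility check in more detail than the paper does.
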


\begin{lemma}\label{lem:basechange5}
$${\rm Tor}(C'_*(t),\bold c'(t),\emptyset)
=
\prod_{\ell=1}^{b}(t^{a(\ell)}-1).
$$
\end{lemma}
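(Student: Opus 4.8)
The plan is to compute the Reidemeister torsion of the very small chain complex $C'_*(t)$ directly from Definition \ref{def:torsion}, exploiting that this complex is concentrated in degrees $1$ and $2$ with a single nonzero boundary operator. By construction (see (\ref{eq:chain3})) we have $C'_3(t)=C'_0(t)=0$, so the only nonzero terms are $C'_2(t)$ and $C'_1(t)$, each of dimension $b$ over $\mathbb F(t)$, with preferred bases $\bold c'^2(t)=\{1\otimes v_\ell\otimes T_\ell\}_{\ell=1}^b$ and $\bold c'^1(t)=\{1\otimes v_\ell\otimes\lambda_\ell\}_{\ell=1}^b$. Since $H_*(C'_*(t))=0$ by Lemma \ref{lem:acyclic2}(1) and the complex is acyclic, no homology bases enter and the torsion is simply (a signed power of) the determinant of $d'_2$ expressed in these bases.

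The key computational input is (\ref{eq:boundaryoperator2}), which says that $d'_2$ sends $1\otimes v_\ell\otimes T_\ell$ to $(t^{a(\ell)}-1)\cdot(1\otimes v_\ell\otimes\lambda_\ell)$. Hence in the chosen bases the matrix of $d'_2$ is the diagonal matrix $\mathrm{diag}(t^{a(1)}-1,\ldots,t^{a(b)}-1)$, whose determinant is $\prod_{\ell=1}^b(t^{a(\ell)}-1)$. Plugging into Definition \ref{def:torsion}: in degree $2$ we have $\bold b^2=\emptyset$ (nothing maps into $C'_2(t)$ from degree $3$) and $d'_2(\bold c'^2(t))$ must serve as $\bold b^2$ accordingly, while in degree $1$ we have $\bold b^1=\emptyset$ since $C'_0(t)=0$, and $d'_2(\bold c'^2(t))$ is already a basis of $C'_1(t)=B_0'(t)\oplus\cdots$; carefully, $d_2'(\bold b^2)$ with $\bold b^2 = \bold c'^2(t)$ gives a basis of $B_1'(t)=\mathrm{im}\,d'_2 = C'_1(t)$, and the two change-of-basis determinants are $[\bold c'^2(t)/\bold c'^2(t)]=1$ in degree $2$ and $[d'_2(\bold c'^2(t))/\bold c'^1(t)]=\prod_\ell(t^{a(\ell)}-1)$ in degree $1$. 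Taking the alternating product with the exponents $(-1)^{i+1}$ from Definition \ref{def:torsion} yields $\prod_{\ell=1}^b(t^{a(\ell)}-1)^{\pm 1}$, and the result is stated only up to sign (and the paper's torsion lives in $\mathbb F^*(t)/\{\pm1\}$), so it suffices to pin down the exponent sign; a short bookkeeping check of which degree contributes $d'_2(\bold b^2)$ versus the identity transformation confirms the exponent is $+1$, giving exactly $\prod_{\ell=1}^b(t^{a(\ell)}-1)$.

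The only subtlety worth care is the indexing convention in Definition \ref{def:torsion}: one must track which of degrees $1$ and $2$ the factor $d_{i+1}(\bold b^{i+1})\bold{\tilde h}^i\bold b^i$ produces the nontrivial determinant, and whether the overall exponent ends up $+1$ or $-1$. Because the complex is supported in exactly two adjacent degrees with an isomorphism between them, this is the standard fact that for a two-term acyclic complex $0\to C_2\xrightarrow{\partial}C_1\to 0$ the torsion equals $\det(\partial)^{\pm1}$ in the given bases — I would simply cite or reproduce that one-line computation rather than belabor it. I expect no real obstacle here: the main (very minor) point is getting the sign convention consistent with Definition \ref{def:torsion}, and since the final answer is asserted in $\mathbb F^*(t)/\{\pm1\}$ up to sign, even that is immaterial to the statement as written. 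Thus the proof is essentially the observation that $d'_2$ is, in the preferred bases, diagonal with entries $t^{a(\ell)}-1$, together with the two-term-complex torsion formula.
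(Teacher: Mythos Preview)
Your proposal is correct and follows essentially the same approach as the paper: the paper's proof is the one-line observation that ${\rm Tor}(C'_*(t),\bold c'(t),\emptyset)=\det d'_2=\prod_{\ell=1}^{b}(t^{a(\ell)}-1)$, which is exactly the two-term-complex torsion computation you carry out in detail using (\ref{eq:boundaryoperator2}). Your additional discussion of the exponent bookkeeping is more explicit than the paper's, but not substantively different.
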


\begin{proof}
From the chain complex (\ref{eq:chain3})
(and (\ref{eq:boundaryoperator2})), 
we obtain:
$
{\rm Tor}(C'_*(t),\bold c'(t),\emptyset)
=
\det d'_2
=
\prod_{\ell=1}^{b}(t^{a(\ell)}-1).$
\end{proof}

\begin{lemma}\label{lem:basechange6}
\begin{align*}
{\rm Tor}(C''_*(t),\bold c''(t),\emptyset)
=&
[1\otimes d_3(\bold b^3)\bold c'^2(t)1\otimes \bold b^2
/\bold c'^2(t)\bold c''^2(t)]^{-1}\\
\cdot
&[1\otimes d_2(\bold b^2)\bold c'^1(t)1\otimes \bold b^1
/\bold c'^1(t)\bold c''^1(t)]\cdot
[1\otimes d_1(\bold b^1)
/\bold c''^0(t)]^{-1}.
\end{align*}
\end{lemma}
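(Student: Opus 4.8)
The plan is to apply Definition \ref{def:torsion} directly to the chain complex $C''_*(t)$, which is acyclic by Lemma \ref{lem:acyclic2}(3), and then to re-express the resulting determinants---which a priori live in the quotient spaces $C''_i(t)=C_i(t)/C'_i(t)$---as determinants in the total spaces $C_i(t)$ with respect to the concatenated basis $\bold c'^i(t)\cup\bold c''^i(t)$.

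First I would record the simplifications in the extreme degrees. Since $C'_3(t)=0$ and $C'_0(t)=0$, we have $C''_3(t)=C_3(t)$ and $C''_0(t)=C_0(t)$, with $\bold c''^3(t)$ and $\bold c''^0(t)$ their bases. Writing $\bar d_i$ for the boundary operator induced on $C''_*(t)$ and $Z''_i,B''_i$ for its cycles and boundaries, acyclicity forces $Z''_3=B''_3=0$, so $\bar d_3$ is injective; hence, in the notation of Definition \ref{def:torsion}, one may choose $\bold b^3=\bold c''^3(t)$, while $\bold b^0=\emptyset$ and $\bold b^1,\bold b^2$ are chosen so that $\bar d_i(\bold b^i)$ is a basis of $B''_{i-1}$. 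Expanding Definition \ref{def:torsion} with empty homology basis then gives
$$
{\rm Tor}(C''_*(t),\bold c''(t),\emptyset)
=[\bold b^3/\bold c''^3(t)]\cdot[\bar d_3(\bold b^3)\bold b^2/\bold c''^2(t)]^{-1}\cdot[\bar d_2(\bold b^2)\bold b^1/\bold c''^1(t)]\cdot[\bar d_1(\bold b^1)/\bold c''^0(t)]^{-1},
$$
and the first factor is $1$ because $\bold b^3=\bold c''^3(t)$.

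It remains to convert the three quotient-space determinants into the ones in the statement. For $i=1,2$ the subspace $C'_i(t)\subset C_i(t)$ has basis $\bold c'^i(t)$ and $C_i(t)/C'_i(t)=C''_i(t)$, so for any pair of bases of $C''_i(t)$ lifted to $C_i(t)$ the transition matrix between $\bold c'^i(t)$ followed by the first lift and $\bold c'^i(t)$ followed by the second lift is block upper triangular with an identity block on $C'_i(t)$; its determinant therefore equals the determinant of the induced transition on the quotient. Applying this with the first lift a representative of $\bar d_{i+1}(\bold b^{i+1})\bold b^i$ in $C_i(t)$ and the second $\bold c''^i(t)$, one identifies $[\bar d_3(\bold b^3)\bold b^2/\bold c''^2(t)]$ with $[1\otimes d_3(\bold b^3)\bold c'^2(t)1\otimes\bold b^2/\bold c'^2(t)\bold c''^2(t)]$ and similarly for the degree-$1$ term; for $i=0$ no $\bold c'^0(t)$ is needed since $C''_0(t)=C_0(t)$. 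As torsion is defined only modulo $\pm1$, the reordering of basis vectors implicit in matching the order in the statement is immaterial, and collecting the three factors yields the asserted equality.

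The main---indeed essentially the only---obstacle is the bookkeeping in this last conversion: keeping track of which vectors are genuine elements of $C_i(t)$ and which are lifts of classes in the quotient, checking that the $\bold c'^i(t)$ may be inserted in exactly the positions appearing in the statement, and verifying that the sign ambiguities (from reordering and from the $\pm1$ indeterminacy already present in Definition \ref{def:torsion}) cause no harm. The degenerate behaviour in the extreme degrees---why the top-degree factor drops out and why the degree-$0$ factor needs no $\bold c'^0(t)$---is precisely the reflection of $C'_3(t)=C'_0(t)=0$ together with the acyclicity of $C''_*(t)$.
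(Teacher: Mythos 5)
Your proposal is correct and takes essentially the same approach as the paper: expand Definition \ref{def:torsion} for the acyclic quotient complex, discard the top-degree factor because $\bold b^3 = \bold c''^3(t)$, and re-express the quotient determinants as determinants in $C_i(t)$ with $\bold c'^i(t)$ prepended, using the block-triangular transition matrix.
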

\begin{proof}
By the definition of the way to choose a set of vectors 
$\bold b''^{i+1}$ in $C''_{i+1}$, 
$d''_{i+1}\bold b''^{i+1}$ is a basis of 
$B''_{i}=\text{im}(d''_{i+1}:C''_{i+1}\to C''_{i}).$
Furthermore, we have
\begin{align*}
{\rm Tor}(C''_*(t),\bold c''(t),\emptyset)
&=
\prod_{i=0}^{3}[d''_{i+1}(1\otimes\bold b''^{i+1})1\otimes\bold b''^i/\bold c''^i(t)]^{(-1)^{i+1}}\\
&=
\prod_{i=0}^{2}[d''_{i+1}(1\otimes\bold b''^{i+1})1\otimes\bold b''^i/\bold c''^i(t)]^{(-1)^{i+1}}.
\end{align*}
since $C''_*(t)$ is acyclic (Lemma \ref{lem:acyclic2} (3)) 
and $\bold b''^3=\bold c''^3$.
For $i=1,2$, 
the set of vectors $1\otimes \bold b''^i$ in $C''_i(t)$ 
generates a subspace on which the boundary operator
$d_i:C_i(t)\to C_{i-1}(t)$ is injective by definition, 
so 
$\bold c'^i(t)\cup d_{i+1}(1\otimes\bold b^{i+1})
\cup 1\otimes\bold b^i$ 
may be regarded as a basis of $C_*(t)$, 
where $1\otimes\bold b^i$
is a lift of $1\otimes\bold b''^i$ to $C_*(t)$. 
Therefore, we have
$$
\prod_{i=0}^{2}[d''_{i+1}(1\otimes\bold b''^{i+1})1\otimes\bold b''^i/\bold c''^i(t)]^{(-1)^{i+1}}
=
\prod_{i=0}^{2}[\bold c'^i(t)1\otimes d_{i+1}(\bold b^{i+1})
1\otimes\bold b^i/\bold c'^i(t)\bold c''^i(t)]^{(-1)^{i+1}}.
$$
This is the desired conclusion up to $\pm 1$.
\end{proof}

\noindent {\bf Proof of Theorem \ref{thm:limit}.}

First, we note that
if $t$ tends to 1, the boundary operator $d'_2$ in (\ref{eq:chain3}) 
becomes the zero map and
the complex $C'_*(t)$ changes into $C'_*$. 
Moreover, 
if we suppose $\bold c'^i$ in $C_*$ 
using the exact sequence $(\ref{eq:complex1})$, 
we have the following by Proposition \ref{prop:basis}:
\begin{align}\label{eq:basechange7}
H_1(M;V_{2k+1}) \text{ is generated by } 
\bold h^1_{\bold{\lambda}}&=\{[v_1\otimes\lambda_1],\ldots, [v_b\otimes\lambda_b]\}
\text{ and }
\bold{\tilde{h}_{\lambda}}^1= \bold c'^1;\\
H_2(M;V_{2k+1}) \text{ is generated by } 
\bold h^2&=\{[v_1\otimes T_1],\ldots, [v_b\otimes T_b]\}
\text{ and }
\bold{\tilde{h}}^2= \bold c'^2 \notag.
\end{align}
Here, $\bold{\tilde{h}_{\lambda}}^1$ and $\bold{\tilde{h}}^2$
are lifts of $\bold{h_{\lambda}^1}$ and $\bold{h}^2$ into $C_*$.

Let $\bold b''^{i+1}$ be a set of vectors in $C''_{i+1}$ 
such that $d''_{i+2}(\bold b''^{i+1})$ is a basis
of $B_i=\text{im}(d''_{i+1}:C''_{i+1}\to C''_{i})$.
By Lemmas \ref{lem:basechange1} and \ref{lem:basechange6},  
we have
\begin{align*}
{\rm Tor}(C''_*(t),\bold c''(t),\emptyset)
&=
\{[1\otimes d_3(\bold b^3)\bold c'^2(t)1\otimes \bold b^2
/\bold c^2(t)]\cdot
[\bold c^2(t)/\bold c'^2(t)\bold c''^2(t)]\}^{-1}\\
&\hphantom{=}\cdot
\{[1\otimes d_2(\bold b^2)\bold c'^1(t)1\otimes \bold b^1
/\bold c^1(t)]\cdot
[\bold c^1(t)/\bold c'^1(t))\bold c''^1(t)]\}\\
&\hphantom{=}\cdot
\{[1\otimes d_1(\bold b^1)/\bold c^0(t)]\cdot
[\bold c^0(t)/\bold c''^0(t)]\}^{-1}\\
=\big(&\prod_{i=0}^{2}[\bold c^i(t)/\bold c'^i(t)\bold c''^i(t)]^{(-1)^{i+1}}\big)
\cdot[1\otimes d_3(\bold b^3)\bold c'^2(t)1\otimes \bold b^2
/\bold c^2(t)]^{-1}\\
&\cdot
[1\otimes d_2(\bold b^2)\bold c'^1(t)1\otimes \bold b^1
/\bold c^1(t)]
\cdot
[1\otimes d_1(\bold b^1)/\bold c^0(t)]^{-1}
\end{align*}
Therefore, by Lemmas \ref{lem:basechange3}, \ref{lem:basechange4} and 
\ref{lem:basechange5}, 
we have
\begin{align*}
{\rm Tor}(C_*(t),\bold c(t),\emptyset)
&=
\prod_{\ell=1}^{b}(t^{a(\ell)}-1)\cdot 
{\rm Tor}(C''_*(t),\bold c''(t),\emptyset)
\cdot\prod_{i=0}^2
[\bold c^i(t)/\bold c'^i(t)\bold c''^i(t)]^{(-1)^i}\\
&=
\prod_{\ell=1}^{b}(t^{a(\ell)}-1)\cdot 
[1\otimes d_3(\bold b^3)\bold c'^2(t)1\otimes \bold b^2
/\bold c^2(t)]^{-1}\\
&\hphantom{=}\cdot
[1\otimes d_2(\bold b^2)\bold c'^1(t)1\otimes \bold b^1
/\bold c^1(t)]
\cdot
[1\otimes d_1(\bold b^1)/\bold c^0(t)]^{-1}.
\end{align*}
Moreover, by (\ref{eq:baseabb}) and (\ref{eq:basechange7}) 
together with
$\displaystyle{\lim_{t\to 1}(1\otimes d_3(\bold b^3))=d_3\bold b^3}$ 
and
$\displaystyle{\lim_{t\to 1}(1\otimes \bold b^2)=\bold b^2}$, 
we obtain 
\begin{align*}
\lim_{t\to 1}
&\Big([1\otimes d_3(\bold b^3)\bold c'^2(t)1\otimes \bold b^2
/\bold c^2(t)]^{-1}
\cdot
[1\otimes d_2(\bold b^2)\bold c'^1(t)1\otimes \bold b^1
/\bold c^1(t)]\\
&\hphantom{=}\cdot
[1\otimes d_1(\bold b^1)/\bold c^0(t)]^{-1}\Big)\\
&=
[d_3(\bold b^3)\bold{\tilde{h}^2}\bold b^2)/\bold c^2]^{-1}
\cdot
[d_2(\bold b^2)\bold{\tilde{h}^1_{\lambda}}\bold b^1/\bold c^1]
\cdot
[d_1(\bold b^1)/\bold c^0]^{-1}\\
&=
{\rm Tor}(C_*(M;V_{2k+1}),\bold h_{\bold{\lambda}}).
\end{align*}
By Definition \ref{def:TAI},
$\Delta_{M,\rho_{2k+1}}(t)
=
{\rm Tor}(C_*(M;V_{2k+1}(t)),\bold c(t),\emptyset)
=
{\rm Tor}(C_*(t),\bold c(t),\emptyset)
$, 
so we complete the proof of (2) in Theorem \ref{thm:limit}.

\section{Main results}\label{sec:results}

Let $M$ be a complete, oriented, hyperbolic 3-manifold 
whose boundary consists of torus cusps. 
Set
\begin{equation}\label{eq:correction1}
\mathcal A_{M,2k}(t)=\frac{\Delta_{M,\rho_{2k}}(t)}{\Delta_{M,\rho_{2}}(t)},
\hspace{0.5cm}
\mathcal A_{M,2k+1}(t)=\frac{\Delta_{M,\rho_{2k+1}}(t)}{\Delta_{M,\rho_{3}}(t)}.
\end{equation}
Then, by Theorems \ref{thm:menal-porti} and \ref{thm:limit}, we have
\begin{theorem}\label{thm:main}
Under the above notation, we have:
$$
\lim_{k\to\infty}\frac{\log|\mathcal A_{M,2k+1}(1)|}{(2k+1)^2}
=
\frac{{\rm Vol}(M)}{4\pi}.
$$
If $M$ satisfies the condition (\ref{eq:menal-porti}), we have:
$$
\lim_{k\to\infty}\frac{\log|\mathcal A_{M,2k}(1)|}{(2k)^2}
=
\frac{{\rm Vol}(M)}{4\pi}.
$$
\end{theorem}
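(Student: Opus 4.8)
The plan is to combine the chain-level comparison established in Theorem \ref{thm:limit} with Menal-Ferrer and Porti's asymptotic formula (Theorem \ref{thm:menal-porti}), taking logarithms and dividing by $(2k+1)^2$ (resp.\ $(2k)^2$), so that all correction terms are seen to be negligible in the limit. First I would treat the odd case. By part (2) of Theorem \ref{thm:limit}, for each $n=2k+1$ we have
$$
{\rm Tor}(C_*(M;V_{2k+1}),\bold h_{\lambda})
=
\lim_{t\to 1}\frac{\Delta_{M,\rho_{2k+1}}(t)}{\prod_{\ell=1}^{b}(t^{a(\ell)}-1)},
$$
and the same identity with $2k+1$ replaced by $3$. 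The denominators $\prod_{\ell}(t^{a(\ell)}-1)$ are identical in the two cases (they depend only on $\alpha$ and the manifold, not on $n$), so taking the ratio gives
$$
\frac{{\rm Tor}(C_*(M;V_{2k+1}),\bold h_{\lambda})}{{\rm Tor}(C_*(M;V_{3}),\bold h_{\lambda})}
=
\lim_{t\to1}\frac{\Delta_{M,\rho_{2k+1}}(t)}{\Delta_{M,\rho_{3}}(t)}
=
\mathcal A_{M,2k+1}(1),
$$
the last equality by (\ref{eq:correction1}), assuming the limit and the evaluation at $t=1$ commute (which they do since both numerator and denominator are holomorphic near $t=1$ with the pole orders matching). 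The left-hand side is exactly $\mathcal T_{2k+1}(M)$ in the notation of Definition \ref{def:torsions}, once one checks that the homology basis $\bold h_{\lambda}$ may be used in place of $\bold h$: this is legitimate because Proposition 4.2 of \cite{menal-porti2} (quoted after Definition \ref{def:torsions}) states $\mathcal T_{2k+1}(M)$ is independent of the choice of $\bold h$, and in any case the base-change factor $[\bold h^1_{\lambda}/\bold h^1]$ coming from Lemma \ref{lem:basechange1} is the same for $V_{2k+1}$ and $V_3$ and cancels in the quotient. Therefore $|\mathcal A_{M,2k+1}(1)|=|\mathcal T_{2k+1}(M)|$, and the odd statement follows immediately from the first half of Theorem \ref{thm:menal-porti}.

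For the even case, part (1) of Theorem \ref{thm:limit} gives ${\rm Tor}(C_*(M;V_{2k}))=\Delta_{M,\rho_{2k}}(1)$ and likewise ${\rm Tor}(C_*(M;V_{2}))=\Delta_{M,\rho_2}(1)$, so by (\ref{eq:correction1})
$$
\mathcal A_{M,2k}(1)=\frac{\Delta_{M,\rho_{2k}}(1)}{\Delta_{M,\rho_2}(1)}
=\frac{{\rm Tor}(C_*(M;V_{2k}))}{{\rm Tor}(C_*(M;V_{2}))}
=\mathcal T_{2k}(M,\eta),
$$
again in the notation of Definition \ref{def:torsions}. Here I must invoke the hypothesis that $M$ satisfies condition (\ref{eq:menal-porti}): by Corollary 3.4 of \cite{menal-porti2} this guarantees the existence of an \emph{acyclic} spin structure $\eta$, and it is with respect to that $\eta$ that $\rho_{2k}$ is defined (so that Proposition \ref{prop:dim}(1) and Lemma \ref{lem:acycliceven} apply, making the torsions well defined with empty homology basis). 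With $\eta$ acyclic, the second half of Theorem \ref{thm:menal-porti} yields
$$
\lim_{k\to\infty}\frac{\log|\mathcal A_{M,2k}(1)|}{(2k)^2}
=\lim_{k\to\infty}\frac{\log|\mathcal T_{2k}(M,\eta)|}{(2k)^2}
=\frac{{\rm Vol}(M)}{4\pi},
$$
as desired.

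The main obstacle I anticipate is the bookkeeping needed to identify the twisted Alexander quotients with the Reidemeister torsion quotients $\mathcal T_{n}$ cleanly, rather than any hard analysis: one must be careful that the factor $\prod_{\ell}(t^{a(\ell)}-1)$ really is independent of $n$ and cancels, that the spin-structure dependence genuinely drops out in the odd case (Remark \ref{rem:factor} is cited for this), and that interchanging $\lim_{t\to1}$ with the formation of the ratio is justified by the acyclicity results of Section \ref{sec:dubois} (Proposition \ref{prop:homology1}), which tell us $\Delta_{M,\rho_n}(t)$ has a zero/pole at $t=1$ of precisely the order $b$ absorbed by the denominator. Once these identifications are in place, the theorem is just a restatement of Theorem \ref{thm:menal-porti}, so the only real content beyond Sections \ref{sec:menal-porti}--\ref{sec:yamaguchi} is assembling the pieces in the right order.
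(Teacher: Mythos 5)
Your proposal is correct and follows essentially the same route as the paper's own proof: in the even case, combine Theorem~\ref{thm:limit}(1) with the fact that condition~(\ref{eq:menal-porti}) forces acyclic spin structures and then apply Theorem~\ref{thm:menal-porti}; in the odd case, use Theorem~\ref{thm:limit}(2) to write $\Delta_{M,\rho_{2k+1}}$ as $\prod_\ell(t^{a(\ell)}-1)$ times a rational function regular at $t=1$, cancel the $n$-independent prefactor, identify the ratio of torsions with $\mathcal T_{2k+1}(M)$ (relying on the $\bold h$-independence from Proposition~4.2 of \cite{menal-porti2}), and apply Theorem~\ref{thm:menal-porti} again. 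The only cosmetic difference is that you argue the general $a(\ell)$ case directly rather than assuming $a(\ell)=1$ for simplicity as the paper does, and you make explicit the commuting of the $t\to1$ limit with taking the quotient; neither changes the substance.
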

If $L$ is an algebraically split link in $S^3$, 
the complement of $L$ satisfies the condition 
(\ref{eq:menal-porti});
see Definition \ref{def:link}.
Hence, we have Theorem \ref{thm:link}.
Since every knot is algebraically split, 
we have the following corollary.

\begin{corollary}[Theorem 1.1 in \cite{goda}]\label{cor:goda}
For any hyperbolic knot $K$, 
$$
\lim_{k\to\infty}\frac{\log|\mathcal A_{K,2k}(1)|}{(2k)^2}
=
\lim_{k\to\infty}\frac{\log|\mathcal A_{K,2k+1}(1)|}{(2k+1)^2}
=
\frac{{\rm Vol}(K)}{4\pi}.
$$
\end{corollary}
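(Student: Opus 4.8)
The plan is to deduce the corollary immediately from Theorem \ref{thm:main} by specializing to the knot exterior $M = S^3 \setminus \mathrm{Int}\,N(K)$. First I would record that $M$ is a complete, oriented, hyperbolic $3$-manifold — this is precisely the hypothesis that $K$ is a hyperbolic knot — and that its boundary is a single torus, so $b = 1$. I would also note that, by the convention fixed at the end of Section \ref{sec:pre}, the invariants $\Delta_{K,\rho_n}(t)$ entering the definitions of $\mathcal A_{K,2k}$ and $\mathcal A_{K,2k+1}$ are by definition $\Delta_{M,\rho_n}(t)$, so that $\mathcal A_{K,n}(t) = \mathcal A_{M,n}(t)$ in the notation of (\ref{eq:correction1}). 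Here $H_1(M;\mathbb Z)\cong\mathbb Z$ is generated by the meridian, so $\alpha$ is the abelianization and $a(1)=1$ in the notation of (\ref{eq:dubois-yamaguchi}).

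Next I would check the hypotheses of Theorem \ref{thm:main}. The odd-dimensional statement requires nothing beyond hyperbolicity of $M$, which holds. The even-dimensional statement requires $M$ to satisfy condition (\ref{eq:menal-porti}); this holds for either of two reasons, and I would invoke whichever is cleanest: since $\partial M$ has a single component, the Remark following (\ref{eq:menal-porti}) (Lemma 6.8 in \cite{hempel}) applies directly; equivalently, a single knot is vacuously an algebraically split link in the sense of Definition \ref{def:link}, since there are no ordered pairs of distinct components, so the exterior of $K$ satisfies (\ref{eq:menal-porti}) by the discussion preceding Theorem \ref{thm:link}. I would also observe that the ratios in (\ref{eq:correction1}) are well defined: by Proposition \ref{prop:homology1} the complexes $C_*(M;V_n(t))$ are acyclic for all $n$, so $\Delta_{M,\rho_n}(t)\in\mathbb C^*(t)/\{\pm1\}$, and in particular $\Delta_{M,\rho_2}(t)$ and $\Delta_{M,\rho_3}(t)$ are nonzero.

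With the hypotheses in place, Theorem \ref{thm:main} yields
\[
\lim_{k\to\infty}\frac{\log|\mathcal A_{M,2k+1}(1)|}{(2k+1)^2}
=
\lim_{k\to\infty}\frac{\log|\mathcal A_{M,2k}(1)|}{(2k)^2}
=
\frac{\mathrm{Vol}(M)}{4\pi},
\]
and since $\mathrm{Vol}(M) = \mathrm{Vol}(K)$ by the definition of $\mathrm{Vol}(K)$, rewriting $\mathcal A_{M,n}$ as $\mathcal A_{K,n}$ gives exactly the claimed pair of equalities; the $\pm t^{p}$ indeterminacy of the twisted Alexander invariant is harmless because we evaluate at $t=1$ and take absolute values. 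There is essentially no genuine obstacle in this corollary: it is a pure specialization, and the only point that needs a word is the verification that the knot exterior falls under the even-case hypothesis (\ref{eq:menal-porti}), which is exactly the $b=1$ remark already recorded in Section \ref{sec:menal-porti}.
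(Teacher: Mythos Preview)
Your proposal is correct and follows essentially the same route as the paper: the corollary is obtained by specializing Theorem \ref{thm:main} to the knot exterior, the only point being that condition (\ref{eq:menal-porti}) holds, which the paper justifies by noting that every knot is an algebraically split link (you also cite the equivalent $b=1$ remark). The extra details you record about $\alpha$, $a(1)$, and well-definedness of the ratios are fine but not needed for this specialization.
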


\begin{remark}
As in (\ref{eq:correction1}), 
$\mathcal A_{M,\rho_n}(t)$ is defined by dividing the principal part. 
But it is not essential, especially in the even case.
We may describe as follows without a correction term: 
$\Delta_{M,\rho_{2}}(t)$ and $\Delta_{M,\rho_{3}}(t).$
\begin{itemize}
\item
$\displaystyle{\lim_{k\to\infty}\frac{\log|\Delta_{M,2k}(1)|}{(2k)^2}=\frac{{\rm Vol}(M)}{4\pi};}$
\item
$\displaystyle{\lim_{k\to\infty}
\frac{1}{(2k+1)^2}\log\Big|\lim_{t\to 1}
\frac{\Delta_{M,2k+1}(t)}{\prod_{\ell=1}^{b}(t^{a(\ell)}-1)}\Big|=\frac{{\rm Vol}(M)}{4\pi}}$, 
where $b$ is the number of the component of $\partial M$.
\end{itemize}
\end{remark}

\medskip

\noindent {\bf Proof of Theorem \ref{thm:main}.}\\
\noindent Case 1 (even-dimensional representation $\rho_{2k}$ case). 

Since $M$ satisfies the condition (\ref{eq:menal-porti}),
all spin structures on $M$ are acyclic
by Corollary 3.4 in \cite{menal-porti2}. 
Then, we have the conclusion by Theorem \ref{thm:menal-porti} 
and (1) in Theorem \ref{thm:limit}.\\

\noindent Case 2 (odd-dimensional representation $\rho_{2k+1}$ case).

For the simplicity, we assume that 
$a(1)=a(2)=\cdots =a(\ell)=1$. 
We can prove the general case by the same argument.
Part (2) of Theorem \ref{thm:limit} implies that 
$\Delta_{M,\rho_{2k+1}}(t)
=(t-1)^b\tilde{\Delta}_{M,\rho_{2k+1}}(t)$ 
and 
Tor$(C_*(M;V_{2k+1}),\bold h_{\bold{\lambda}})
=\tilde{\Delta}_{M,\rho_{2k+1}}(1)$,  
where $\tilde{\Delta}_{M,\rho_{2k+1}}(t)$ is a rational function. 
Then,  
$$
\mathcal A_{M,2k+1}(1)
=
\frac{\tilde{\Delta}_{M,\rho_{2k+1}}(1)}{\tilde{\Delta}_{M,\rho_{3}}(1)}
=
\frac{{\rm Tor}(C_*(M;V_{2k+1}),\bold h_{\bold{\lambda}})}
{{\rm Tor}(C_*(M;V_{3}),\bold h_{\bold{\lambda}})}
=
\mathcal T_{2k+1}(M)
$$
by Definition \ref{def:torsions}.
Hence, we have Theorem \ref{thm:main} 
by Theorem \ref{thm:menal-porti}. 
\begin{flushright}
$\square$
\end{flushright}

\begin{corollary}\label{cor:-1}
$$
\lim_{k\to\infty}\frac{\log|\mathcal A_{M,2k}(-1)|}{(2k)^2}
=
\lim_{k\to\infty}\frac{\log|\mathcal A_{M,2k+1}(-1)|}{(2k+1)^2}
=
\frac{{\rm Vol}(M)}{4\pi}.
$$
\end{corollary}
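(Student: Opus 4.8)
The plan is to reduce the $t=-1$ statement to the $t=1$ statement (Theorem \ref{thm:main}) by repeating the chain-level arguments of Sections \ref{sec:dubois} and \ref{sec:yamaguchi} with the evaluation homomorphism $t\mapsto -1$ in place of $t\mapsto 1$. The key observation is that nowhere in those sections was the specific value $t=1$ essential: what mattered was that $t-1$ acts invertibly on the relevant twisted homology groups and that the evaluated complex $C_*(M;V_n)$ has the homology computed by Proposition \ref{prop:dim}. So first I would check that the Wang-type exact sequence (\ref{eq:exact1}) and its boundary analogue (\ref{eq:exact2}), (\ref{eq:commutative1}) go through verbatim with the map $t-1$ replaced by $t+1$ (coming from the short exact sequence $0\to C_*(\overline M;V_n)\xrightarrow{t+1}C_*(\overline M;V_n)\xrightarrow{t=-1}C_*(M;V_n)\to 0$); since $C_*(\overline M;V_n)$ has no free part (Lemmas \ref{lem:nofreepart}, \ref{lem:acycliceven}, \ref{lem:acyclicodd}), multiplication by $t+1$ is again an isomorphism on $H_*(\overline M;V_n)$, and hence $H_*(M;V_n(t))=0$ is irrelevant here; what we actually need is that $C_*(M;V_n)$ is still acyclic in the even case and still has the homology of Proposition \ref{prop:dim} in the odd case, which is a statement about $t=$ evaluation independent of whether we plug in $1$ or $-1$.

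Next I would rerun the base-changing and multiplicativity bookkeeping of Section \ref{sec:yamaguchi} with $t=-1$. The only place the value of $t$ enters quantitatively is the boundary operator $d'_2$ in (\ref{eq:boundaryoperator2}), which sends $1\otimes v_\ell\otimes T_\ell$ to $(t^{a(\ell)}-1)(1\otimes v_\ell\otimes\lambda_\ell)$; evaluating at $t=-1$ gives the factor $((-1)^{a(\ell)}-1)$, which is $0$ when $a(\ell)$ is even and $-2$ when $a(\ell)$ is odd. For this to cause no trouble we need $(-1)^{a(\ell)}\neq 1$, i.e. every $a(\ell)$ odd; in the link setting $a(\ell)$ is the power of $t$ assigned to the meridian $\mu_\ell$ (see (\ref{eq:dubois-yamaguchi})), and for the abelianization of an algebraically split link each meridian maps to a generator, so $a(\ell)=1$ for all $\ell$ and the factor is $-2\neq 0$. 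Under that hypothesis Lemma \ref{lem:basechange5} becomes ${\rm Tor}(C'_*(-1),\bold c'(-1),\emptyset)=\prod_\ell((-1)^{a(\ell)}-1)=(-2)^b$, a nonzero constant, and the analogue of Theorem \ref{thm:limit}(2) reads ${\rm Tor}(C_*(M;V_{2k+1}),\bold h_{\bold\lambda})=\lim_{t\to-1}\Delta_{M,\rho_{2k+1}}(t)/\prod_\ell(t^{a(\ell)}-1)$, while (1) becomes ${\rm Tor}(C_*(M;V_{2k}))=\pm\Delta_{M,\rho_{2k}}(-1)$ since the chain map $C_*(M;V_{2k}(t))\to C_*(M;V_{2k})$ induced by $t=-1$ is still a quasi-isomorphism.

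Finally, forming the ratios $\mathcal A_{M,2k}(-1)$ and $\mathcal A_{M,2k+1}(-1)$ exactly cancels the constant $(-2)^b$ (it appears with the same power in numerator and denominator), so one gets $\mathcal A_{M,2k}(-1)=\pm\mathcal T_{2k}(M,\eta)$ and $\mathcal A_{M,2k+1}(-1)=\pm\mathcal T_{2k+1}(M)$, precisely as in Case 2 of the proof of Theorem \ref{thm:main}; Theorem \ref{thm:menal-porti} then delivers the limits. I expect the main obstacle to be verifying the odd-dimensional acyclicity and homology-basis statements at $t=-1$ — i.e. the $t=-1$ analogue of Lemma \ref{lem:acyclicodd} and of the nonvanishing of the image of $[(1+t+\cdots+t^{a(\ell)-1})\otimes v_\ell\otimes\lambda_\ell]$ under evaluation — together with pinning down the hypothesis on the $a(\ell)$ under which the argument is valid; for genuine links (all $a(\ell)=1$) this is automatic, and the general statement should be recorded with the caveat that one needs $t^{a(\ell)}\neq(-1)^{a(\ell)}$... more precisely $(-1)^{a(\ell)}\neq 1$, for all $\ell$.
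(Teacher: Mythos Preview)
Your approach differs from the paper's, and there is a genuine gap in it.

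The paper does \emph{not} rerun Sections \ref{sec:dubois}--\ref{sec:yamaguchi} at $t=-1$. Instead it passes to the $2$-fold cyclic cover $\hat M\to M$ determined by $\alpha\bmod 2$, invokes the product formula $\Delta_{\hat M,\rho_n}(t^2)=\Delta_{M,\rho_n}(t)\cdot\Delta_{M,\rho_n}(-t)$ (Dubois--Yamaguchi), and applies Theorem \ref{thm:main} to $\hat M$ together with ${\rm Vol}(\hat M)=2\,{\rm Vol}(M)$. Subtracting the already-known $t=1$ limit yields the $t=-1$ limit. This keeps all homology and torsion computations inside the class of representations $\rho_n$ to which Propositions \ref{prop:dim}, \ref{prop:basis} and Theorem \ref{thm:menal-porti} apply.

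The gap in your argument is the identification of the quotient in the sequence
\[
0\to C_*(\overline M;V_n)\xrightarrow{\,t+1\,}C_*(\overline M;V_n)\longrightarrow\ ?\ \to 0.
\]
Setting $t=-1$ does \emph{not} produce $C_*(M;V_n)$; it produces $C_*(M;V_n\otimes\epsilon)$, where $\epsilon:\pi_1(M)\to\{\pm1\}$ is the character $\gamma\mapsto(-1)^{\alpha(\gamma)}$. Equivalently, the evaluation $t\mapsto-1$ of the twisted complex $C_*(M;V_n(t))$ replaces $\rho_n$ by $\epsilon\cdot\rho_n$. So the sentence ``$C_*(M;V_n)$ is still acyclic in the even case and still has the homology of Proposition \ref{prop:dim} in the odd case, \dots\ independent of whether we plug in $1$ or $-1$'' is not justified: the evaluated complex has changed. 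For $n=2k$ this is salvageable, because $\sigma_{2k}(-I)=-I$ forces $\epsilon\cdot\rho_{2k}=\sigma_{2k}\circ(\epsilon\cdot\rho_2)$, i.e.\ the $\epsilon$-twist corresponds to passing to another spin structure $\eta'$; under condition (\ref{eq:menal-porti}) $\eta'$ is again acyclic and Theorem \ref{thm:menal-porti} still applies. For $n=2k+1$, however, $\sigma_{2k+1}(-I)=I$, so $\epsilon\cdot\rho_{2k+1}$ is \emph{not} of the form $\sigma_{2k+1}\circ(\text{lift of }\mathrm{Hol}_M)$. Hence Propositions \ref{prop:dim} and \ref{prop:basis} do not apply to it (in fact, when $a(\ell)$ is odd the boundary tori have no $\epsilon\cdot\rho_{2k+1}$-invariants, so the homology pattern is different), and the torsion your argument would produce is ${\rm Tor}(C_*(M;V_{2k+1}\otimes\epsilon))$, not ${\rm Tor}(C_*(M;V_{2k+1}),\bold h_{\bold\lambda})$. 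You therefore do not land on $\mathcal T_{2k+1}(M)$, and Theorem \ref{thm:menal-porti} gives you no asymptotic control over the quantity you actually computed. The double-cover trick in the paper sidesteps exactly this issue.
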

\begin{proof}
Let $\hat{M}$ be the 2-fold cyclic cover of $M$, 
and let prj be the covering map $\hat{M}\to M$. 
Then, we have the following exact sequence: 
$$
1 \to\pi_1(\hat{M})\xrightarrow{\text{prj}_*}
\pi_1(M)\xrightarrow{\pi}\mathbb Z/2\mathbb Z\to 1.
$$
Here, we use the pull-back of homomorphisms of $\pi_1(M)$ 
as homomorphisms of $\pi_1(\hat{M})$. 
Let $\alpha$ be the surjective homomorphism 
$\pi_1(M)\to \mathbb Z=\langle t \rangle$ 
and 
$\hat{\alpha}$ the pull-back by $\text{prj}_{*}$. 
We also suppose that $\pi$ factors through $\alpha$, 
and we use the symbol $\hat{\rho}_n$ for 
the pull-back of $\rho_n$ by $\text{prj}_{*}$.
Under this situation, we can apply Corollary 4.4 in \cite{dubois-yamaguchi2}
where $q=2$ and $s=t^2$ to our setting.
Then, we have
$$\Delta_{\hat{M},\rho_n}(s)
=
\Delta_{\hat{M},\rho_n}(t^2)
=
\Delta_{M,\rho_n}(t)\cdot\Delta_{M,\rho_n}(-t).
$$
Suppose $n$ is odd (i.e., $n=2k+1$) 
and 
we denote by $\tilde{\Delta}_{M,\rho_{2k+1}}(t)$ 
the fractional function defined in the proof of Theorem \ref{thm:main}. 
Since 
$
\Delta_{\hat{M},\rho_{2k+1}}(t^2)
=
(t^2-1)^b\tilde{\Delta}_{\hat{M},\rho_{2k+1}}(t^2)$
and 
$
\Delta_{M,\rho_{2k+1}}(t)\cdot\Delta_{M,\rho_{2k+1}}(-t)
=
(t-1)^b\tilde{\Delta}_{M,\rho_{2k+1}}(t)\cdot
(-t-1)^b\tilde{\Delta}_{M,\rho_{2k+1}}(-t)$, 
we have 
$
|\tilde{\Delta}_{\hat{M},\rho_{2k+1}}(t^2)|
=
|\tilde{\Delta}_{M,\rho_{2k+1}}(t)||\tilde{\Delta}_{M,\rho_{2k+1}}(-t)|.$
It is known that 
$\text{Vol}(\hat{M})=2 \text{Vol}(M)$, so
we obtain the following from Theorem \ref{thm:main}:
\begin{align*}
\frac{2\text{Vol(M)}}{4\pi}
&=
\frac{\text{Vol}(\hat{M})}{4\pi}
=
\lim_{k\to\infty}\frac{\log|\mathcal A_{\hat{M},2k+1}(1)|}{(2k+1)^2}
=
\lim_{k\to\infty}\frac{1}{(2k+1)^2}
\log\Big|\frac{\tilde{\Delta}_{\hat{M},\rho_{2k+1}}(1)}
{\tilde{\Delta}_{\hat{M},\rho_{3}}(1)}\Big|\\
&=
\lim_{k\to\infty}\frac{\log|\tilde{\Delta}_{\hat{M},\rho_{2k+1}}(1)|}
{(2k+1)^2}
=
\lim_{k\to\infty}\frac{\log|\tilde{\Delta}_{M,\rho_{2k+1}}(1)|
|\tilde{\Delta}_{M,\rho_{2k+1}}(-1)|}
{(2k+1)^2}\\
&=
\lim_{k\to\infty}\frac{\log|\tilde{\Delta}_{M,\rho_{2k+1}}(1)|}
{(2k+1)^2}+
\lim_{k\to\infty}\frac{\log|\tilde{\Delta}_{M,\rho_{2k+1}}(-1)|}
{(2k+1)^2}\\
&=
\frac{\text{Vol}(M)}{4\pi}+
\lim_{k\to\infty}\frac{\log|\tilde{\Delta}_{M,\rho_{2k+1}}(-1)|}
{(2k+1)^2}.
\end{align*}
Therefore, we have the conclusion in the case that $n$ is odd. 
In the same way, we can prove it in the case that $n$ is even.   
\end{proof}

Taking account of the conjecture on the volume and 
the colored Jones polynomial for knots \cite{kashaev, murakami}, 
it might be worth considering the following problem.

\medskip

\begin{problem}\label{pro:Jones}
Does the equation in Corollary \ref{cor:goda} hold by
substituting any root of unity ? 
\end{problem}

\section{On concrete calculations}\label{sec:calculation}
In this section, we present some known facts and methods 
regarding calculating twisted Alexander invariants.
We give concrete calculations that provide approximate values 
of the volumes of the figure eight knot and the Whitehead link 
complements. 

First, we remark on a representation 
that leads to the hyperbolic volume.
Let $G$ be a finitely generated group
and 
set $R(G)=\text{Hom}(G,\text{SL}(2,\mathbb C))$. 
Since $G$ is finitely generated, 
$R(G)$ may be embedded in a product 
$\text{SL}(2,\mathbb C)\times\cdots\times\text{SL}(2,\mathbb C)$
by mapping each representation to the image of a generating set. 
Thus, $R(G)$ is regarded as an affine algebraic set 
whose defining polynomials are induced by the relators 
of a presentation of $G$. 
It is known that the structure is independent of the choice 
of the presentation of $G$ up to isomorphism. 
For a representation $\rho\in R(G)$, 
its \textit{character} is the map $\chi_{\rho}:G\to \mathbb C$ 
defined by  $\chi_{\rho}(\gamma)=\text{trace}(\rho(\gamma))$
for $\gamma\in G$, where trace means the trace of a matrix.
Let $X(G)$ be the set of all characters. 
For a given $\gamma\in G$, 
we define the map $\tau_{\gamma}:X(G)\to\mathbb C$ 
by $\tau_{\gamma}(\chi)=\chi(\gamma)$. 
Then, it is known that $X(G)$ is an affine algebraic set 
that embeds in $\mathbb C^n$ with coordinates 
$(\tau_{\gamma_{1}},\ldots ,\tau_{\gamma_{n}})$ 
for some $\gamma_1,\ldots ,\gamma_n \in G$.
This affine algebraic set is called the \textit{character variety} 
of $G$. 
Note that the set $\{\gamma_1,\ldots ,\gamma_n\}$ may be chosen 
to contain a generating set of $G$ and  
the projection $R(G)\to X(G)$ given by $\rho\mapsto\chi_{\rho}$ 
is surjective.

Let $M$ be a complete hyperbolic 3-manifold.
The character variety $X(\pi_1(M))$ contains the distinguished component
related to the complete hyperbolic structure. 
The component is defined by containing a lift of the holonomy 
representation $\pi_1(M)\to\text{PSL}(2,\mathbb C)$ 
determined by the complete hyperbolic structure. 

Two representations $\rho$ and $\rho'$ are said to be 
\textit{equivalent} if there is an automorphism $\psi$ 
of the representation space such that 
$\rho'(\gamma)=\psi\circ\rho(\gamma)\circ\psi^{-1}$ 
for all $\gamma\in G$.
In general, the equation $\Delta_{M,\rho}(t)=\Delta_{M,\rho'}(t)$ holds 
if $\rho$ and $\rho'$ are equivalent representations 
(Section 3 in \cite{wada}).
If $\rho_2,\rho'_2 \in R(\pi_1(M))$ are irreducible representations 
with $\chi_{\rho_2}=\chi_{\rho'_2}$, then $\rho_2$ is equivalent to $\rho'_2$ 
(Proposition 1.5.2 in \cite{culler-shalen}). 
So, $\rho_n$ is equivalent to $\rho'_n$, where $\rho_n (\rho'_n$, resp.) 
is the irreducible representation to $\text{SL}(n,\mathbb C)$ 
stated in Section \ref{sec:holonomy}, 
so that $\Delta_{M,\rho_n}(t)=\Delta_{M,\rho'_n}(t)$. 
Thus, 
our theorem stated in Section \ref{sec:results} 
holds for the representations 
that are contained in the distinguished component of $X(\pi_1(M))$.

Next, we review spin structures and lifts of $\text{Hol}_M$.
As in Propositions 2.1 and 2.2 in \cite{menal-porti2}, 
there are one-to-one correspondence spin structures of $M$ 
and lifts of $\text{Hol}_M$.  
Let $L$ be a $b$-component algebraically split link
$L=K_1\cup\cdots\cup K_b$ and $M$ the complement of $L$.
From the homology long exact sequence of 
$(M,\bdd M;\mathbb Z)$ 
and
$(M,\bdd M;\mathbb Z/2\mathbb Z)$, 
we have $b_1(M;\mathbb Z)=b_1(M;\mathbb Z/2\mathbb Z)=b$. 
Hence, the number of lifts of the holonomy representation of $M$ 
to $\text{SL}(2,\mathbb C)$ is equal to 
$|H^1(M;\mathbb Z/2\mathbb Z)|=2^b$
by Corollary 2.3 in \cite{menal-porti2}.

For each component $K_{\ell}$ of $L$, 
there is the meridian $\mu_{\ell}$ corresponding to $K_{\ell}$ 
and
there are two types of lift of the holonomy representation 
such that trace $\text{Hol}_{(L,\eta)}([\mu_{\ell}])=+2$ 
and trace $\text{Hol}_{(L,\eta')}([\mu_{\ell}])=-2$ 
where $\eta,\eta'$ are corresponding spin structures. 
More precisely, 
for a $b$-component hyperbolic link $L=K_1\cup\cdots\cup K_b$, 
let $G(L)$ be the fundamental group of the exterior of $L$ in $S^3$. 
We suppose that $G(L)$ has the following Wirtinger presentation: 
$$\langle 
x_{1_1},x_{1_2},\ldots,x_{1_{j_1}},
x_{2_1},x_{2_2},\ldots,x_{2_{j_2}},
\ldots , 
x_{b_1},x_{b_2},\ldots,x_{b_{j_b}}
~|~
r_1,r_2,\ldots, r_{s}
\rangle
$$
where 
the generator $x_{\ell_k}$ corresponds to 
the meridian of 
the $\ell$th component $K_{\ell}$ 
and
$\sum_{k=1}^{b}j_k=s+1$. 
Let $\eta$ be a spin structure of $L$. 
Then, we have trace $\text{Hol}_{(L,\eta)}(x_{\ell_k})=+2$ 
or trace $\text{Hol}_{(L,\eta)}(x_{\ell_k})=-2$ 
for all $k$.
Hence,  
we have a sequence of signs that consists of $b$-component 
$(a_1 a_2 \cdots  a_b)$ 
where 
$
\begin{cases}
a_{\ell}=+ &
\text{if }
\text{trace Hol}_{(L,\eta)}(x_{\ell_k})=+2  \\
a_{\ell}=- &
\text{if } 
\text{trace Hol}_{(L,\eta)}(x_{\ell_k})=-2  
\end{cases}.
$
We call this {\it the sign of holonomy lift} 
corresponding to the spin structure $\eta$
and 
denote by $\rho_{2}^{a_1 a_2 \cdots  a_b}$
the holonomy representation with the spin structure 
$\text{Hol}_{(L,\eta)}$: 
$G(L)\to\text{SL}(2,\mathbb C)$.
Moreover, 
we obtain the following representation: 
$$
\rho_{n}^{a_1 a_2 \cdots  a_b}
:
G(L) \to \text{SL}(n,\mathbb C)
$$
by composing $\rho_{2}^{a_1 a_2 \cdots  a_b}$ with $\sigma_n$
stated in Section \ref{sec:holonomy}.
According to (\ref{eq:correction1}),
we set:
\begin{equation}\label{eq:correction2}
\mathcal A^{a_1a_2\cdots a_b}_{L,2k}(t)
=\frac{\Delta_{L,\rho^{a_1a_2\cdots a_b}_{2k}}(t)}
{\Delta_{L,\rho^{a_1a_2\cdots a_b}_{2}}(t)},
\hspace{0.5cm}
\mathcal A^{a_1a_2\cdots a_b}_{L,2k+1}(t)
=\frac{\Delta_{L,\rho^{a_1a_2\cdots a_b}_{2k+1}}(t)}
{\Delta_{L,\rho^{a_1a_2\cdots a_b}_{3}}(t)}.
\end{equation}

Moreover, the following fact is known.
\begin{remark}\label{rem:factor}
Since an odd-dimensional irreducible complex representation 
of $\text{SL}(2,\mathbb C)$ factors through $\text{PSL}(2,\mathbb C)$, 
$\Delta_{M,\rho_n}$ and 
$\text{Tor}(C_*(M;V_n),\bold h)$  
does not change for any sign of the holonomy lift 
$(a_1 a_2 \cdots  a_b)$ if $n$ is odd.  
\end{remark}
Hence, the sign of the holonomy lift is essential 
in the case of the even-dimensional representation.

\begin{example}
Let $K$ be the figure eight knot $4_1$. 
The volume of $K$ is $2.02988\cdots$.
A Wirtinger presentation of $G(K)$ is 
$$G(K)
=
\langle
a,b~|~ab^{-1}a^{-1}ba=bab^{-1}a^{-1}b
\rangle.
$$
Here, $a$ and $b$ correspond to the meridians of $K$, 
as shown in Fig.\ref{fig:figure8}.
\begin{figure}[htbp]
\begin{center}
\includegraphics[width=0.25\textwidth]{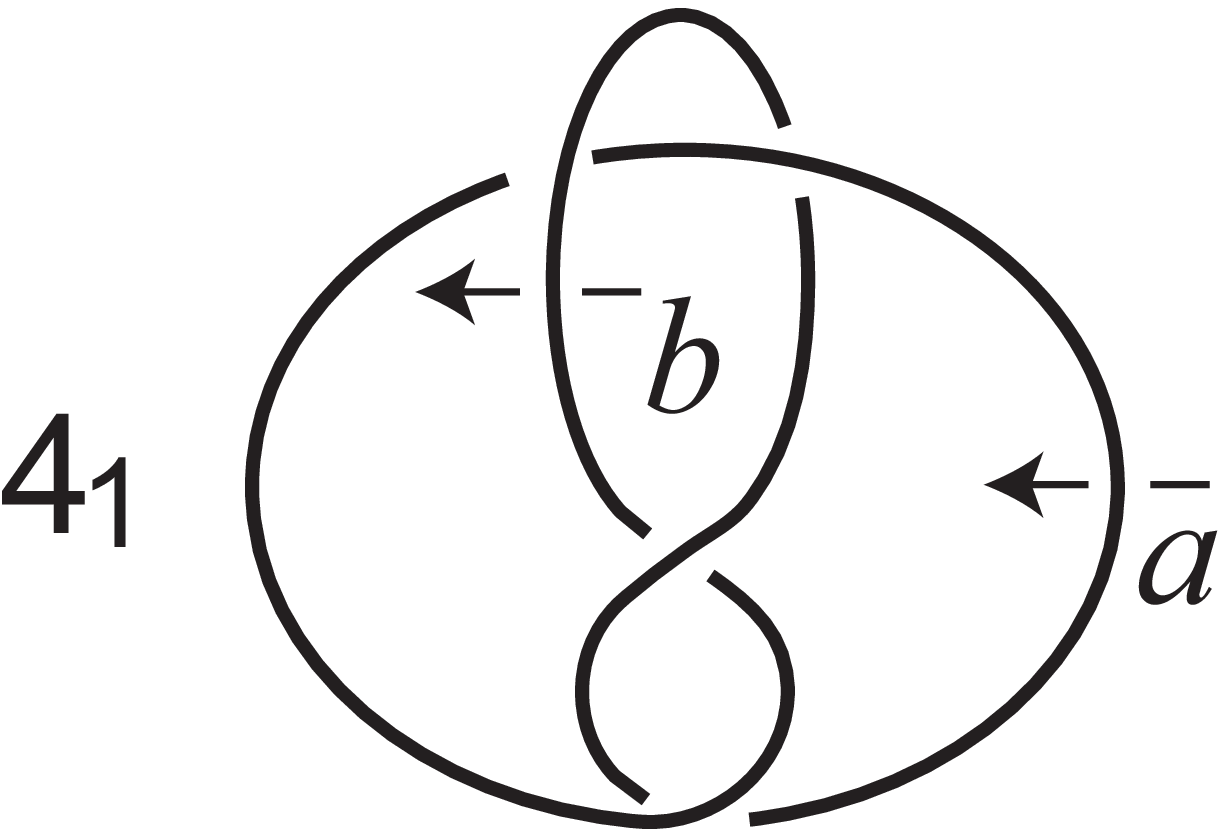}
\end{center}
\caption{}
\label{fig:figure8}
\end{figure}
It is known that the holonomy representation of $G(K)$ 
is given by 
$$
{\text{Hol}}_K(a)
=
\begin{pmatrix}
1 & 1 \\
0 & 1
\end{pmatrix}, 
\,\,\,
{\text{Hol}}_K(b)
=
\begin{pmatrix}
\hphantom{-}1 & 0 \\
-u & 1
\end{pmatrix},
$$
where $u$ is a complex value satisfying 
$u^2+u+1=0$. 
Then, we have
\begin{align*}
\rho_2^+&:G(K)\to\text{SL}(2,\mathbb C):
\rho_2^+(a)
=
\begin{pmatrix}
1 & 1 \\
0 & 1
\end{pmatrix}, 
\,\,\,
\rho^-_2(b)
=
\begin{pmatrix}
\hphantom{-}1 & 0 \\
-u & 1
\end{pmatrix},\\
\rho_2^-&:G(K)\to\text{SL}(2,\mathbb C):
\rho_2^-(a)
=
\begin{pmatrix}
-1 & -1 \\
\phantom{-}0 & -1
\end{pmatrix}, 
\,\,\,
\rho^-_2(b)
=
\begin{pmatrix}
-1 & \hphantom{-}0 \\
\phantom{-}u & -1
\end{pmatrix}.
\end{align*}
Set $A=\rho^+_2(a)$ and $B=\rho^+_2(b)$.
By the definition (Section \ref{sec:holonomy}), 
we have 
$p\big(A^{-1}
\begin{pmatrix}
x\\
y
\end{pmatrix}
\big)
=
p(x-y,y)
$, 
and
$(x-y)^2=x^2-2xy+y^2,\,
 (x-y)y=xy-y^2.
$
Hence, we have
$$
\rho^+_3(a)=
\begin{pmatrix}
\hphantom{-}1 & \hphantom{-}0 & \hphantom{-}0 \\
-2 & \hphantom{-}1 & \hphantom{-}0\\
\hphantom{-}1 & -1 & \hphantom{-}1
\end{pmatrix}.
$$
By the same calculations, we have 
$$
\rho^+_3(b)=
\begin{pmatrix}
1 & u & u^2 \\
0 & 1 & 2u \\
0 & 0 & 1
\end{pmatrix},\,
\rho^+_4(a)=
\begin{pmatrix}
\hphantom{-}1 & \hphantom{-}0 & \hphantom{-}0 & 0 \\
-3 & \hphantom{-}1 & \hphantom{-}0 & 0 \\
\hphantom{-}3 & -2 & \hphantom{-}1 & 0\\
-1 & \hphantom{-}1 & -1 & 1
\end{pmatrix},\,
\rho^+_4(b)=
\begin{pmatrix}
1 & u & u^2 & u^3 \\
0 & 1 & 2u & 3u^2\\
0 & 0 & 1 & 3u\\
0 & 0 & 0 & 1
\end{pmatrix},
\cdots.
$$
Via Fox calculus for $G(K)$, 
we obtain the denominator of 
$\Delta_{K,\rho^+_2}(t)=\det(tB-I)=(t-1)^2$. 
(We have the same results in 
$u=(-1+\sqrt{-3})/2$ and $u=(-1-\sqrt{-3})/2$ 
hereinafter.)
On the other hand, 
the numerator of 
$\Delta_{K,\rho^+_2}(t)=\det(I-t^{-1}AB^{-1}A^{-1}+
AB^{-1}A^{-1}B-tB+BAB^{-1}A^{-1})
=\frac{1}{t^2}(t-1)^2(t^2-4t+1).$
Thus, we obtain the following 
rational functions: 
$$
\Delta_{K,\rho^+_2}(t)=
\frac{1}{t^2}(t^2-4t+1),\,
\Delta_{K,\rho^+_3}(t)=
-\frac{1}{t^3}(t-1)(t^2-5t+1),
$$
$$
\Delta_{K,\rho^+_4}(t)=
\frac{1}{t^4}(t^2-4t+1)^2,\,
\Delta_{K,\rho^+_5}(t)=
-\frac{1}{t^5}(t-1)(t^4-9t^3+44t^2-9t+1).
$$
Using these functions, we have approximations 
to the volume of $4_1$ as follows:
\begin{align*}
\frac{4\pi \log\mathcal |\mathcal A^+_{K,4}(t)|}{4^2}
&=\frac{\pi\log|t^2-4t+1|}{4}\xrightarrow{t=1}\frac{\pi\log 2}{4}\approx 0.544397\cdots, \\
\frac{4\pi \log\mathcal |\mathcal A^+_{K,5}(t)|}{5^2}
&=\frac{4\pi\log|\frac{t^4-9t^3+44t^2-9t+1}{t^2-5t+1}|}{5^2}
\xrightarrow{t=1}
\frac{4\pi\log\frac{28}{3}}{5^2}\approx 1.12273\cdots.
\end{align*}
The data of $\mathcal A^{+}_{K,n}(1)$ for $n\le 15$ can be found in \cite{goda}. 
We proceeded the calculation so that the following data are obtained. 
\begin{table}[htb]
  \begin{tabular}{|c||c|c|c|c|c|} \hline
$n$ & $15$ & $20$ & $25$ & $30$ & $35$ \\ \hline
$\frac{4\pi\log|\mathcal A^{+}_{K,n}(1)|}{n^2}$ 
& $1.93360\cdots$ & $1.97120\cdots$
& $1.99522\cdots$ & $2.00380\cdots$ & $2.01219\cdots$\\ \hline
\end{tabular}
\end{table}
Set 
$\displaystyle{\tilde{\Delta}_{K,\rho_n}(t)=\frac{\Delta_{K,\rho_n}(t)}{t-1}}$ 
if $n$ is odd according to Section \ref{sec:results}.
\begin{table}[htb]
  \begin{tabular}{|c||c|c|c|c|c|} \hline
$n$ & $15$ & $25$ & $35$  \\ \hline
$\frac{4\pi\log|\tilde{\Delta}^{+}_{K,\rho_{n}}(1)|}{n^2}$ 
& $1.99496\cdots$ & $2.01731\cdots$
& $2.02346\cdots$ \\ \hline
$n$ & $20$ & $30$ &  \\ \hline
$\frac{4\pi\log|\Delta^{+}_{K,\rho_{n}}(1)|}{n^2}$ 
& $1.99298\cdots$ & $2.01348\cdots$
& \\ \hline
\end{tabular}
\end{table}

By similar calculations, we have the following data in the case of $\rho^-_2$. 
Note that the value of $\mathcal A^-_{K,n}(1)$ is the same as 
$\mathcal A^+_{K,n}(1)$ in the case of $n$:odd as noted in 
Remark \ref{rem:factor}. 
\begin{table}[htb]
\begin{tabular}{|c||c|c|c|c|c|} \hline
$n$ & $16$ & $20$ & $26$ & $30$ & $36$ \\ \hline
$\frac{4\pi\log|\mathcal A^{-}_{K,n}(1)|}{n^2}$ 
& $1.98381\cdots$ & $2.00039\cdots$
& $2.01243\cdots$ & $2.01677\cdots$ & $2.02078\cdots$\\ \hline
$\frac{4\pi\log|\Delta^{-}_{K,n}(1)|}{n^2}$ 
& $2.07177\cdots$ & $2.05668\cdots$
& $2.04574\cdots$ & $2.04179\cdots$ & $2.03815\cdots$\\ \hline
\end{tabular}
\end{table}
\end{example}

\begin{example}
Let $L$ be the Whitehead link as illustrated in 
Fig. \ref{fig:whitehead}. 
The volume of the complement of $L$ is equal to 
$4\times \text{Catalan's constant}\approx 3.66386\cdots$.
The link group $G(L)$ has the following Wirtinger presentation,  
where $a$ and $b$ correspond to the meridians of $L$ as 
in Fig. \ref{fig:whitehead}: 
$$
G(L)
=
\langle
a,b~|~awa^{-1}w^{-1}
\rangle 
\text{ where } 
w=bab^{-1}a^{-1}b^{-1}ab.
$$

\begin{figure}[htbp]
\begin{center}
\includegraphics[width=0.25\textwidth]{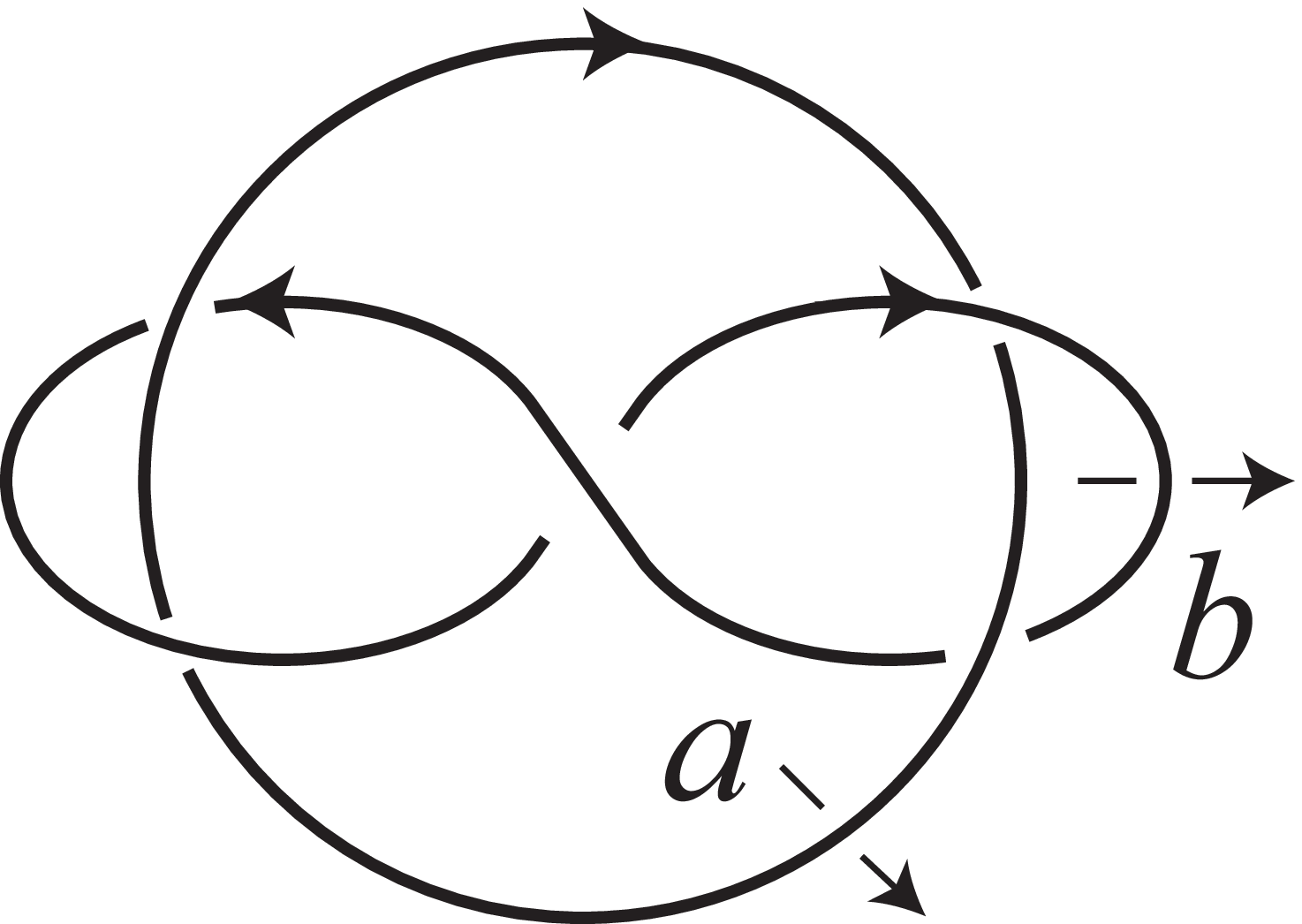}
\end{center}
\caption{}
\label{fig:whitehead}
\end{figure}

Hilden et al.\cite{hilden-lozano-montesinos}
showed an explicit description of the character variety 
of the Whitehead link group. 
In fact, the component of irreducible characters is given by
$$
X(G(L))
=
\{(x,y,v)\in\mathbb C^3~|~p(x,y,v)=0\}\setminus R, 
$$
where 
$p(x,y,v)=xy-(x^2+y^2-2)v+xyv^2-v^3$ 
and 
$R=\{x=\pm 2,v=\pm y\}\cup\{y=\pm 2,v=\pm x\}$;
see Section 4.2 in \cite{dubois-yamaguchi1}.
Let $\rho:G(L)\to \text{SL}(2,\mathbb C)$ be an irreducible 
representation. Then, we can suppose that 
the pair $\rho(a)$ and $\rho(b)$ are expressed as follows 
after taking conjugation with eigenvectors of 
$\rho(a)$ and $\rho(b)$ if necessary: 
$$
\rho(a)=
\begin{pmatrix}
\alpha & 1\\
0 & 1/\alpha
\end{pmatrix}, 
\hspace{0.5cm}
\rho(b)=
\begin{pmatrix}
\beta & 0\\
\gamma & 1/\beta
\end{pmatrix}.
$$
Here, 
the coordinates $(x,y,v)$ as 
$x=\alpha+\alpha^{-1}, y=\beta+\beta^{-1}$ 
and $v=\gamma+\alpha\beta+\alpha^{-1}\beta^{-1}$.
The holonomy representation is obtained by
$\alpha+1/\alpha=\beta+1/\beta=2$ 
since it is a parabolic one.
Therefore, we have four types of representation as follows: 
(I) 
$\rho^{++}_{2}: 
a\mapsto
\begin{pmatrix}
1 & 1 \\
0 & 1
\end{pmatrix},
b\mapsto
\begin{pmatrix}
1 & 0 \\
-1\pm\sqrt{-1} & 1 
\end{pmatrix}
$;
(II)
$\rho^{+-}_{2}: 
a\mapsto
\begin{pmatrix}
1 & 1 \\
0 & 1
\end{pmatrix},
b\mapsto
\begin{pmatrix}
-1 & 0 \\
1\mp\sqrt{-1} & -1 
\end{pmatrix}
$;
(III)
$\rho^{-+}_{2}: 
a\mapsto
\begin{pmatrix}
-1 & -1 \\
\hphantom{-}0 & -1
\end{pmatrix},
b\mapsto
\begin{pmatrix}
1 & 0 \\
-1\pm\sqrt{-1} & 1 
\end{pmatrix}
$;
(IV)
$\rho^{--}_{2}: 
a\mapsto
\begin{pmatrix}
-1 & -1 \\
\hphantom{-}0 & -1
\end{pmatrix},
b\mapsto
\begin{pmatrix}
-1 & \hphantom{-}0 \\
1\mp\sqrt{-1} & -1 
\end{pmatrix}
$. 
Then we have: 
$$
\Delta_{L,\rho^{++}_2}(t)
=t^2-4t+(4\pm2\sqrt{-1})-\frac{4}{t}+\frac{1}{t^2};
$$
$$
\Delta_{L,\rho^{++}_3}(t)
=\frac{1}{t^3}(t-1)^2\big(t^4-4t^3-(2\mp8\sqrt{-1})t^2-4t+1\big).
$$
Then, we have the following data by similar calculations to 
figure eight knot case.
\begin{table}[htb]
  \begin{tabular}{|c||c|c|c|c|c|} \hline
$n$ & $10$ & $15$ & $20$ & $25$ & $30$ \\ \hline
$\frac{4\pi\log|\mathcal A^{++}_{L,n}(1)|}{n^2}$ 
& $3.43083\cdots$ & $3.52207\cdots$
& $3.60589\cdots$ & $3.61282\cdots$ & $3.63810\cdots$\\ \hline
\end{tabular}
\end{table}
\begin{table}[htb]
  \begin{tabular}{|c||c|c|c|} \hline
$n$ & $15$ & $25$ &  \\ \hline
$\frac{4\pi\log|\tilde{\Delta}^{++}_{L,n}(1)|}{n^2}$ 
& $3.65757\cdots$ & $3.66160\cdots$ & \\ \hline
$n$ & $10$ & $20$ & $30$ \\ \hline
$\frac{4\pi\log|\Delta^{++}_{L,n}(1)|}{n^2}$ 
& $3.56149\cdots$ & $3.63856\cdots$ & $3.65261\cdots$ \\ \hline
\end{tabular}
\end{table}
\begin{table}
\begin{tabular}{|c||c|c|c|c|c|} \hline
  $n$ & $10$ & $16$ & $20$ & $26$ & $30$ \\ \hline
$\frac{4\pi\log|\mathcal A^{+-}_{L,n}(1)|}{n^2}$ 
& $3.54395\cdots$ & $3.61657\cdots$
& $3.63358\cdots$ & $3.64594\cdots$ & $3.65040\cdots$\\ \hline
$\frac{4\pi\log|\Delta^{+-}_{L,n}(1)|}{n^2}$ 
& $3.67460\cdots$ & $3.66761\cdots$
& $3.66625\cdots$ & $3.66527\cdots$ & $3.66492\cdots$\\ \hline
$\frac{4\pi\log|\mathcal A^{-+}_{L,n}(1)|}{n^2}$ 
& $3.54395\cdots$ & $3.61657\cdots$
& $3.63358\cdots$ & $3.64594\cdots$ & $3.65040\cdots$\\ \hline
$\frac{4\pi\log|\Delta^{-+}_{L,n}(1)|}{n^2}$ 
& $3.67460\cdots$ & $3.66761\cdots$
& $3.66625\cdots$ & $3.66527\cdots$ & $3.66492\cdots$\\ \hline
$\frac{4\pi\log|\mathcal A^{--}_{L,n}(1)|}{n^2}$ 
& $3.45010\cdots$ & $3.58080\cdots$
& $3.61071\cdots$ & $3.63241\cdots$ & $3.64024\cdots$\\ \hline
$\frac{4\pi\log|\Delta^{--}_{L,n}(1)|}{n^2}$ 
& $3.78300\cdots$ & $3.71084\cdots$
& $3.69394\cdots$ & $3.68166\cdots$ & $3.67723\cdots$\\ \hline
\end{tabular}
\end{table}

\end{example}

These calculations were performed using Wolfram Mathematica 
and MathWorks Matlab.

\bibliographystyle{amsplain}

\end{document}